\numberwithin{equation}{section}
\def\eqlaw{\buildrel d \over =}
\def\ps{\buildrel a.s. \over \rightarrow}
\newcommand{\myvecz}[2] {\left(\begin{array}{cc}#1\\ #2\end{array}\right)}
\newcommand{\myvec}[2] {\left(\begin{array}{cc}#1\\ \vdots \\ #2\end{array}\right)}
\newcommand{\myvecs}[3] {\left(\begin{array}{cc}#1\\#2\\ \vdots \\ #3\end{array}\right)}
\newtheorem{remark}{Remark}
\newtheorem{definition}{Definition}
\newtheorem{theorem}{Theorem}
\newtheorem{prop}{Proposition}
\newtheorem{lemma}{Lemma}
\newtheorem{example}{Example}
\title{Multivariate quantiles and multivariate L-moments}
\date{ }
\begin{document} 

\maketitle
\begin{abstract}
Univariate L-moments are expressed as projections of the quantile function onto an orthogonal basis of polynomials in $L_2([0;1],\mathbb{R})$. We present multivariate versions of L-moments expressed as collections of orthogonal projections of a multivariate quantile function on a basis of multivariate polynomials in  $L_2([0;1]^d,\mathbb{R})$. We propose to consider quantile functions defined as transport from the uniform distribution on $[0;1]^d$ onto the distribution of interest. In particular, we present the quantiles defined by the transport of Rosenblatt and the optimal transport and the properties of the subsequent L-moments.
\end{abstract}

\tableofcontents


\section{Motivations and notations}

Univariate L-moments are either expressed as sums of order statistics or as projections of the quantile function onto an orthogonal basis of polynomials in $L_2([0;1],\mathbb{R})$. Both concepts of order statistics and of quantile are specific to dimension one which makes non immediate a generalization to multivariate data.\\
Let $r\in\mathbb{N}_*:=\mathbb{N}\setminus \{0\}$. For an identically distributed sample $X_1,..., X_r$ on $\mathbb{R}$, we note $X_{1:r}\leq ... \leq X_{r:r}$ its order statistics. It should be noted that $X_{1:r},...,X_{r:r}$ are still random variables.\\
Then, if $\mathbb{E}[|X|]<\infty$, the $r$-th L-moment is defined by :
\begin{equation}
\lambda_r = \frac{1}{r} \sum_{k=0}^{r-1} (-1)^k \dbinom{r-1}{k} \mathbb{E}[X_{r-k:r}].
\label{eq:L_mom_ch2}
\end{equation}
If we use $F$ to denote the cumulative distribution function (cdf) and define the quantile function for $t\in[0;1]$ as the generalized inverse of $F$ i.e. $Q(t) = \inf\{x\in\mathbb{R} \text{ s.t. } F(x) > t\}$, this definition can be written : 
\begin{equation}
\lambda_r  = \int_0^1 Q(t) L_{r}(t)dt
\label{eq:iintlmom}
\end{equation}
where the $L_r$'s are the shifted Legendre polynomials which are a Hilbert orthogonal basis for $L^2([0;1],\mathbb{R})$ equipped with the usual scalar product (for $f,g\in L^2([0;1],\mathbb{R})$, $\langle f,g\rangle = \int_0^1 f(t)g(t)dt$) :
\begin{equation}
L_r(t)= \sum_{k=0}^{r-1} (-1)^{k} \dbinom{r-1}{k}^2 t^{r-1-k}(1-t)^k= \sum_{k=0}^{r-1} (-1)^{r-k} \dbinom{r-1}{k} \dbinom{r-1+k}{k} t^k.
\label{legendre_pol}
\end{equation}

L-moments were introduced by Hosking \cite{hosking90} in 1990 as alternative descriptors to central moments for a univariate distribution. They have some properties that we wish to keep for the analysis of multivariate data. Serfling and Xiao \cite{xiao07} listed the following key features of univariate L-moments which are desirable for a multivariate generalization : 
\begin{itemize}
\item The existence of the $r$-th L-moment for all $r$ if the expectation of the underlying random variable is finite
\item A distribution is characterized by its infinite series of L-moments (if the expectation is finite)
\item A scalar product representation with mutually orthogonal weight functions (equation \ref{eq:iintlmom})
\item A representation as expected value of an L-statistic (linear function of order statistics)
\item The U-statistic structure of sample versions should give asymptotic results
\item The L-statistic structure of sample versions should give a quick computation
\item Tractable unbiased sample version coming from the U-statistic and L-statistic structure should exist
\item Sample L-moments are more stable than classical moments, increasingly with higher order : the impact of each outlier is linear in the L-moment case whereas it is in the order of $(x-\bar{x})^k$ for classical moments of $k$ order
\end{itemize}
We will add two more properties related to the previous list :
\begin{itemize}
\item the equivariance of the L-moments with respect to the dilatation and their invariance with respect to translation for L-moments of an order larger than two
\item the tractability of the L-moments in some parametric families which makes them useful for estimation in these families, especially for the shape parameter of heavy tailed distributions.
\end{itemize}

Heavy-tailed distributions naturally appear in many different fields which then need description features for dispersion or kurtosis usually assuming moments with order larger than two; for example in applications in climatology based on annual data such as annual maximum rainfall. In \cite{hosking97}, Hosking and Wallis successfully applied univariate L-moments for the inference in the so-called regional frequency analysis that have to deal with heavy-tailed distributions. We can mention furthermore financial risk analysis \cite{jurczenko06} or target detection in radar \cite{ward06} that are fields in which multivariate heavy-tailed distributions appear.\\

Serfling and Xiao proposed a multivariate extension of L-moments for a vector $(X_1,...,X_d)^T$, based on the conditional distribution of $X_i$ given $X_j$ for all  $(i,j)\in\{1,...,d\}^2$. Their definition satisfies most of the properties of the univariate L-moments, but for the characterization of the multivariate distributions by the family of its L-moments. We generalize their approach by a slightly shift in perspective that will allow us to maintain the characterization property in the multivariate case.\\
Our starting point for a definition of multivariate L-moments is the characterization as orthogonal projection of the quantile onto an orthogonal basis of polynomials defined on $[0;1]$. It is not difficult to define orthogonal multi-indice polynomials on $[0;1]^d$ (see Lemma \ref{legendre}). It subsequently remains to define a multivariate quantile.\\
As there is no total order in $\mathbb{R}^d$, there are many different ways to define a multivariate quantile. Serfling made a survey of the existing approaches \cite{serfling02}. Amongst them, we can cite Chaudhuri's spatial quantiles \cite{chaudhuri96}, Zuo and Serfling's depth-based quantiles \cite{zuo00} or the generalized quantile process of Einmahl and Mason \cite{einmahl92}. In the DOQR (for Depth-Outlyingness-Quantile-Rank) paradigm given by Serfling \cite{serfling10}, multivariate quantiles map the ball of center zero and radius 1 $B_d(0,1)$ into $\mathbb{R}^d$ without specifying the norm underlying the ball. The definition of an orthogonal basis of polynomials is natural only in $[0;1]^d$, so we consider only the shifted unit ball for the infinite norm in our proposition of multivariate quantile.\\
The approach of multivariate quantile that has been chosen uses the notion of transport of measure. Indeed, in the univariate case, the quantile maps the uniform measure on $[0;1]$ onto the distribution of interest. Galichon and Henry \cite{galichon12} for example proposed to keep this basic property in order to define a multivariate quantile as the optimal transport between the uniform measure on $[0;1]^d$ and the multivariate distribution. We will adopt this definition by relaxing the optimality of the transport. Furthermore, if we consider the Rosenblatt transport \cite{rosenblatt52} in our definition of multivariate L-moments for bivariate random vectors, we match Serfling and Xiao's proposition \cite{xiao07}.\\

We may define a transport $T:\mathbb{R}^d\rightarrow\mathbb{R}^d$ between two measures $\mu$ and $\nu$ defined on $\mathbb{R}^d$.
\begin{definition}
The pushforward measure of $\mu$ through $T$ is the measure denoted by $T\#\mu$ satisfying 
\begin{equation}
T\#\mu(B) = \mu(T^{-1}(B)) \text{ for every Borel subset $B$ of $\mathbb{R}^d$}
\label{transportdef}
\end{equation}
$T$ is said to be a transport map between $\mu$ and $\nu$ if $T\#\mu=\nu$. In the following, we will call $\mu$ the source measure and $\nu$ the target measure.
\end{definition}

There exist many ways of transporting a measure onto another one. Let us mention for example the transport of Rosenblatt we just mentioned or the transport of Moser \cite{villani09}.\\
The transport that has received the most attention is undoubtedly optimal transport. Its first formulation goes back to 1781 by Monge. More recently, it was in particular studied by Gangbo, McCann, Villani \cite{villani04} \cite{villani09} \cite{mccann95}. In its modern formulation, an optimal transport minimizes a cost function amongst any possible transports.\\
These transports were used by Easton and McCulloch \cite{easton90} in order to generalize the Q-Q plots for multivariate data, a graphical tool close to L-moments that especially shows how far two random samples are apart.\\

However, it is often difficult to have closed forms of the solution of the minimization problem issued from the optimal transport for two arbitrary measures. This is the reason of the following construction of a multivariate quantile.\\
Let $\mathcal{N}_d$ be the canonical Gaussian measure on $\mathbb{R}^d$. The mapping $Q_0:[0;1]^d\rightarrow \mathbb{R}^d$ defined through
\begin{equation}
Q_0(t_1,...,t_d) = \left(\begin{array}{c}\mathcal{N}_1^{-1}(t_1)\\ \vdots \\ \mathcal{N}_1^{-1}(t_d)\end{array}\right)
\end{equation}
transports the uniform measure $unif$ on $[0;1]^d$ onto $\mathcal{N}_d$ (it is actually an optimal transport for a quadratic cost). This quantile (or transport) provides the reference measure $\mathcal{N}_d$.\\

Turning back to the extension of the univariate case, consider $\mu=\mathcal{N}_d$ and $\nu$ any measure on $\mathbb{R}^d$. With $T$ defined as in \ref{transportdef}, we may define a transport from the uniform measure on $[0;1]^d$ onto the measure $\nu$ on $\mathbb{R}^d$ by
\begin{equation}
Q:=T\circ Q_0.
\end{equation}
$Q$ (which is a transport from $unif$ to $\nu$) is a natural extension of the quantile function defined from $[0;1]$ equipped with the uniform measure onto $\mathbb{R}$ equipped with a given measure.\\
Clearly, the intermediate Gaussian measure can be skipped and a quantile may be defined directly from $[0;1]^d$ onto $\mathbb{R}^d$ with the respective measures $unif$ and $\nu$.Indeed, we will define transports from $[0;1]^d$ equipped with $unif$ onto $[0;1]^d$ equipped with a given copula; see Section \ref{section_copula}.\\
The interest in the intermediate (or reference) Gaussian measure $\mu$ lies in the fact that a transport $T$ from $\mu$ onto a measure $\nu$ will be easy to define when $\nu$ belongs to specific classes of multivariate distributions with rotational parameters. Note that the transport $T$ need not be optimal for some cost.\\

We will concentrate our attention on models close to elliptical distributions. Let us recall that elliptical distributions are parametrized by the existence of a scatter matrix $\Sigma$, a location vector $m$ and a radial scalar random variable $R\in\mathbb{R}_+$. In fact, $X\in\mathbb{R}^d$ follows an elliptical distribution if and only if
\begin{equation*}
X\eqlaw m + R\Sigma^{1/2}U
\end{equation*}
with $U$ uniform over $S_{d-1}(0,1)$, the sphere of center zero and radius $1$ and $R$ independent of $U$.\\
Even if, to our knowledge, there are no tractable closed forms for the optimal transport of the uniform on $[0;1]^d$ (or even of the standard Gaussian) onto an elliptical distribution, we can define a family of models close to the elliptical ones that contains spherical distributions with an explicit quantile. This allows to build estimators based on a multivariate method of L-moments for the scatter matrix and the mean parameters of this family.\\
The price to pay for using optimal transports is to consider models adapted to this approach. A natural way to work with such quantiles is then to define models through their quantile function, instead of the classical density function. Sei proposed \cite{sei11} to define models through their transport onto a standard multivariate Gaussian. Such models have desirable properties, in particular the ease to describe the independence of marginals and the concavity of their log-likelihood. In a similar desire to define non-Gaussian distributions easy to manipulate in the context of linear models, Box and Cox used a particular form of this transport as well \cite{boxcox64}.\\

Let us now introduce some notation. In the following, we will consider a random variable or vector $X$ with measure $\nu$ and $\eqlaw$ means the equality in distribution. The scalar product between $x$ and $y$ in $\mathbb{R}^d$ will be noted $x.y$ or $\langle x,y\rangle$.\\

\section{Definition of multivariate L-moments and examples}

\subsection{General definition of multivariate L-moments}

Let $X$ be a random vector in $\mathbb{R}^d$. We wish to exploit the representation given by the equation (\ref{eq:iintlmom}) in order to define multivariate L-moments. Recall that we chose quantiles as mappings between $[0;1]^d$ and $\mathbb{R}^d$.\\

We explicit a polynomial orthogonal basis on $[0;1]^d$. Let $\alpha=(i_1,...,i_d)\in\mathbb{N}^d$ be a multi-index and $L_{\alpha}(t_1,...,t_d) = \prod_{k=1}^d L_{i_k}(t_k)$ (where the $L_{i_k}$'s are univariate Legendre polynomials defined by equation \ref{legendre_pol}) the natural multivariate extension of the Legendre polynomials. Indeed, it holds
\begin{lemma}
The $L_{\alpha}$ family is orthogonal and complete in the Hilbert space $L^2([0;1]^d,\mathbb{R})$ equipped with the usual scalar product :
\begin{equation}
\forall f,g\in L^2([0;1]^d), \ \ \  \langle f,g\rangle=\int_{[0;1]^d} f(u).g(u)du
\end{equation}
\label{legendre}
\end{lemma}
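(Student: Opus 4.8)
The plan is to reduce the multivariate statement to the well-known univariate case by exploiting the product structure of both the polynomials $L_\alpha$ and the Lebesgue measure on the cube. First I would recall that the shifted Legendre polynomials $(L_r)_{r\in\mathbb{N}}$ form a complete orthogonal system in $L^2([0;1],\mathbb{R})$; this is classical and may be taken as known (it follows from Gram--Schmidt applied to the monomials $1,t,t^2,\dots$ together with the Stone--Weierstrass theorem, which gives density of polynomials in $L^2([0;1])$).

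For orthogonality, the key step is Fubini's theorem: for multi-indices $\alpha=(i_1,\dots,i_d)$ and $\beta=(j_1,\dots,j_d)$,
\begin{equation}
\langle L_\alpha, L_\beta\rangle = \int_{[0;1]^d} \prod_{k=1}^d L_{i_k}(t_k) L_{j_k}(t_k)\, dt_1\cdots dt_d = \prod_{k=1}^d \int_0^1 L_{i_k}(t_k)L_{j_k}(t_k)\, dt_k = \prod_{k=1}^d \langle L_{i_k},L_{j_k}\rangle,
\end{equation}
which vanishes unless $i_k=j_k$ for every $k$, i.e. unless $\alpha=\beta$; and when $\alpha=\beta$ the product of the (nonzero) univariate norms is nonzero. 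So orthogonality is immediate and carries no real difficulty.

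The substantive part is completeness, i.e. showing that the closed linear span of $\{L_\alpha : \alpha\in\mathbb{N}^d\}$ is all of $L^2([0;1]^d,\mathbb{R})$, equivalently that the only $f\in L^2([0;1]^d)$ orthogonal to every $L_\alpha$ is $f=0$. I would proceed in two moves. First, since each $L_\alpha$ is a polynomial and conversely every monomial $t_1^{a_1}\cdots t_d^{a_d}$ is a finite linear combination of the $L_\alpha$ (because, for each coordinate, $\{L_0,\dots,L_n\}$ spans the degree-$\le n$ univariate polynomials), the span of $\{L_\alpha\}$ equals the space of all polynomials on $[0;1]^d$. Second, polynomials are dense in $L^2([0;1]^d)$: by Stone--Weierstrass the algebra of polynomials is uniformly dense in $C([0;1]^d)$, uniform convergence on the bounded cube implies $L^2$ convergence, and $C([0;1]^d)$ is itself $L^2$-dense in $L^2([0;1]^d)$ (finite Lebesgue measure). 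Chaining these density statements gives that the polynomials, hence the $L_\alpha$, are total in $L^2([0;1]^d)$, which is exactly completeness.

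The main obstacle, such as it is, is purely bookkeeping: one must be careful that the univariate family is genuinely complete (not merely orthogonal) before invoking the tensor-product argument, and that ``complete/total'' is used in the Hilbert-space sense of having dense span rather than merely being a maximal orthogonal set — though for a separable Hilbert space these coincide. An alternative, slightly slicker route to completeness avoids Stone--Weierstrass entirely: use the general fact that if $(e_n)$ is a complete orthonormal system of $L^2(\Omega_1,\mu_1)$ and $(f_m)$ one of $L^2(\Omega_2,\mu_2)$, then $(e_n\otimes f_m)$ is a complete orthonormal system of $L^2(\Omega_1\times\Omega_2,\mu_1\otimes\mu_2)$; iterating $d-1$ times from the univariate case yields the claim directly. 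Either way the proof is short, and I expect the write-up to be a few lines invoking Fubini, the univariate completeness, and the tensor-product principle.
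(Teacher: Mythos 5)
Your proposal is correct and follows essentially the same route as the paper: Fubini's theorem reduces orthogonality to the univariate case, and completeness is obtained by identifying the span of the $L_\alpha$ with all polynomials in $d$ variables and then invoking Stone--Weierstrass together with the density of continuous functions in $L^2([0;1]^d)$ (a step the paper proves explicitly by mollification, which you are entitled to cite as standard).
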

\begin{proof}
The orthogonality is straightforward since if $\alpha=(i_1,...,i_d)\neq\alpha'=(i_1',...,i_d')$, there exists a subindex $1\leq k\leq d$ such that $i_k\ne i_k'$ and 
\begin{equation}
\int_{[0;1]^d} L_{\alpha}(t_1,...,t_d)L_{\alpha'}(t_1,...,t_d)dt_1...dt_d = \prod_{j=1}^d \int_0^1 L_{i_j}(t_j)L_{i_j'}(t_j)dt_j   =0
\end{equation}
thanks to the orthogonality of $L_{i_k'}$ and $L_{i_k}$ in $L_2([0;1],\mathbb{R})$.\\
The univariate Legendre polynomials define an orthogonal basis for the space of polynomials denoted by $\mathbb{R}[X]$. Hence, for all $k$, there exists $c_1,...,c_k\in\mathbb{R}$ such that $X^k = \sum_{i=1}^k c_iL_i(X)$. Thus for all $k_1,...,k_d$, there exists $c_{11},...,c_{1k},...,c_{d1},...,c_{dk}\in\mathbb{R}$ such that 
\begin{equation*}
\prod_{j=1}^d X_j^{k_j} = \prod_{j=1}^d \left( \sum_{i=1}^{k_j} c_{ji} L_i(X) \right).
\end{equation*}
We deduce that $(L_{\alpha})$ is an orthogonal basis of the space of polynomial with $d$ indices $\mathbb{R}[X_1,...,X_d]$. It remains to prove that $\mathbb{R}[X_1,...,X_d]$ is dense in $L_2([0;1]^d,\mathbb{R})$.\\
For this purpose, let $f\in L_2([0;1]^d,\mathbb{R})$. We define a test function $\varphi\in C^0([0;1]^d,\mathbb{R})$ defined for $x\in[0;1]^d$
\begin{equation*}
\varphi(x) = \left\{\begin{array}{ll}
e^{-\frac{1}{1-\|x\|^2}} &\text{   if   } \|x\|<1\\
0  & \text{   if    } \|x\|=1
\end{array}\right.
\end{equation*}
with $\|x\|=\sqrt{\sum_{i=1}^d x_i^2}$.\\
Let $n$ be an integer greater than zero and 
\begin{equation*}
f_n(x) = \frac{1}{\int_{\mathbb{R}^d}\varphi(x)dx} \int_{[\mathbb{R}^d} \frac{1}{n^d}f(x-y)\varphi(\frac{y}{n})\mathds{1}_{x-y\in[0;1]^d}dy
\end{equation*}
Then for all $n> 0, f_n\in C^0([0;1]^d,\mathbb{R}^d)$ and $f_n\rightarrow f$ in $L_2([0;1]^d,\mathbb{R})$. Indeed, by noting $a=\int_{\mathbb{R}^d}\varphi(x)dx$ for $x\in[0;1]^d$
\begin{eqnarray*}
f_n(x)-f(x) &=& \frac{1}{a} \int_{[\mathbb{R}^d} (f(x-y)-f(x))\frac{1}{n^d}\varphi(\frac{y}{n})\mathds{1}_{x-y\in[0;1]^d}dy\\
&=& \frac{1}{a} \int_{[\mathbb{R}^d} (f(x-ny)-f(x))\varphi(y)\mathds{1}_{x-ny\in[0;1]^d}dy
\end{eqnarray*}
Furthermore 
\begin{equation*}
\|f(x-ny)-f(x)\|^2\mathds{1}_{x-ny\in[0;1]^d}\varphi(y)^2\leq 2\varphi(y)^2\int_{[0;1]^d}f(y)^2dy = \|f\|^2_{L_2}\varphi(y)^2,
\end{equation*}
Then as for any $y\in\mathbb{R}^d$, $\|f(x-ny)-f(x)\|^2\mathds{1}_{x-ny\in[0;1]^d}\rightarrow 0$ when $n\rightarrow\infty$; we apply the dominated convergence theorem to show that 
$\|f_n(x)-f(x)\|^2 \rightarrow 0$ for any $x\in[0;1]^d$. In the same way, as 
\begin{equation*}
\|f_n(x)-f(x)\|^2 \leq 2\int_{[0;1]^d}f(y)^2dy
\end{equation*}
We prove by a second application of the dominated convergence theorem that $f_n\rightarrow f$ in $L_2([0;1]^d,\mathbb{R})$.\\
Let $\epsilon>0$. We can thus find $N>0$ such that 
\begin{equation*}
\|f-f_N\|_{L_2}< \epsilon
\end{equation*}
Hence, as $f_N\in C^0([0;1]^d,\mathbb{R}^d)$, by Stone-Weierstrass Theorem (see for example Rudin Theorem 5.8 \cite{rudin91}), there exists $g\in\mathbb{R}[X_1,...,X_d]$ such that :
\begin{equation*}
\|f_N-g\|_{\infty}< \epsilon.
\end{equation*}
Then $\|f-g\|_{L_2} < \|f-f_N\|_{L_2} +\|f_N-g\|_{L_2}  < \epsilon + \|f_N-g\|_{\infty} < 2\epsilon$. We conclude that $\mathbb{R}[X_1,...,X_d]$ is dense in $L_2([0;1]^d,\mathbb{R})$ which proves that $(L_{\alpha})_{\alpha\in\mathbb{N}_*^d}$ is complete.
\end{proof}

We can finally define the multivariate L-moments.
\begin{definition}
\label{def_lmom}
Let $Q:[0;1]^d\rightarrow \mathbb{R}^d$ be a transport between the uniform distribution on $[0;1]^d$ and $\nu$.
Then, if $\mathbb{E}[\|X\|]<\infty$, the L-moment $\lambda_{\alpha}$ of multi-index $\alpha$ associated to the transport $Q$ are defined by : 
\begin{equation}
\label{mlmoments}
\lambda_{\alpha} := \int_{[0;1]^d} Q(t_1,...,t_d) L_\alpha(t_1,...,t_d)dt_1...dt_d \in \mathbb{R}^d.
\end{equation}
\end{definition}
With this definition, there are as many L-moments as ways to transport $unif$ onto $\nu$. The hypothesis of finite expectation guarantees the existence of all L-moments : 
\begin{align*}
\left\| \int_{[0;1]^d} Q(t_1,...,t_d) L_\alpha(t_1,...,t_d)dt_1...dt_d\right\|
&\leq  \left(\sup_{t\in [0;1]^d} |L_\alpha(t)| \right)  \int_{[0;1]^d} \left\| Q(t_1,...,t_d)\right\|dt_1...dt_d\\
&\leq \int_{[0;1]^d} \| x\| dF(x) <\infty.
\end{align*}

\begin{remark}
Given the degree $\delta$ of $\alpha=(i_1,...,i_d)$ that we define by $\delta = \sum_{k=1}^d (i_k-1)+1$, we may define all L-moments with degree $\delta$, each one associated with a given corresponding $\alpha$ leading to the same $\delta$.\\
For example, the L-moment of degree $1$ is 
\begin{equation}
\label{eq212}
\lambda_1 ( =\lambda_{1,1,...,1}) =  \int_{[0;1]^d} Q(t_1,...,t_d) dt_1...dt_d = \mathbb{E}[X].
\end{equation}
The L-moments of degree 2 can be grouped in a matrix :
\begin{equation}
 \Lambda_2 = \left[\int_{[0;1]^d} Q_i(t_1,...,t_d)(2t_j-1) dt_1...dt_d \right]_{1\leq i,j\leq d}.
\end{equation}
In equation \ref{eq212} we noted $Q(t_1,...,t_d) = \left( \begin{array}{cc} Q_1(t_1,...,t_d) \\ \vdots \\ Q_d(t_1,...,t_d) \end{array}\right)$.
\end{remark}

\begin{prop}
Let $\nu$ and $\nu'$ be two Borel probability measures. We suppose that $Q$ and $Q'$ respectively transport $unif$ onto $\nu$ and $\nu'$.\\
Assume that $Q$ and $Q'$ have same multivariate L-moments $(\lambda_{\alpha})_{\alpha\in\mathbb{N}_*^d}$ given by the equation (\ref{mlmoments}).\\
Then $\nu=\nu'$. Moreover :
\begin{equation}
Q(t_1,...,t_d) = \sum_{(i_1,...,i_d)\in \mathbb{N}_*^d}\left(\prod_{k=1}^d (2i_k+1)\right)   L_{(i_1,...,i_d)}(t_1,...,t_d) \lambda_{(i_1,...,i_d)}\in\mathbb{R}^d
\end{equation}
\end{prop}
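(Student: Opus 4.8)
The strategy is to show that each coordinate function $Q_i \in L^2([0;1]^d,\mathbb{R})$ and then expand it in the orthonormal basis furnished by Lemma~\ref{legendre}. First I would check integrability: since $\mathbb{E}[\|X\|]<\infty$ and $Q \# \, unif = \nu$, we have $\int_{[0;1]^d} \|Q(t)\| \, dt = \int_{\mathbb{R}^d} \|x\| \, d\nu(x) < \infty$, so $Q \in L^1([0;1]^d,\mathbb{R}^d)$. To place $Q$ in $L^2$ I would instead work with the Fourier--Legendre partial sums directly: the point is that the generalized L-moments $\lambda_\alpha$ are exactly the (unnormalized) coefficients of $Q$ against $L_\alpha$, because $\langle L_\alpha, L_\alpha\rangle_{L^2([0;1]^d)} = \prod_{k=1}^d \langle L_{i_k},L_{i_k}\rangle_{L^2([0;1])} = \prod_{k=1}^d \frac{1}{2i_k+1}$, using the standard normalization of the shifted Legendre polynomials in~(\ref{legendre_pol}). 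Hence the normalized coefficient of $Q$ on the $L^2$-normalized basis element $\tilde L_\alpha := \big(\prod_k (2i_k+1)\big)^{1/2} L_\alpha$ is $\big(\prod_k (2i_k+1)\big)^{1/2}\lambda_\alpha$, and the claimed reconstruction formula is precisely the Fourier expansion $Q = \sum_\alpha \langle Q,\tilde L_\alpha\rangle \tilde L_\alpha$ rewritten in terms of the $\lambda_\alpha$.

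The heart of the argument is therefore: (i) show $Q$ and $Q'$ agree in $L^2$, hence a.e., from which $Q \# \, unif = Q' \# \, unif$ gives $\nu = \nu'$; and (ii) establish the explicit series. For (ii), once we know $Q_i \in L^2([0;1]^d,\mathbb{R})$, completeness of $(L_\alpha)$ gives the reconstruction formula with $L^2$-convergence of the partial sums. For (i), if $Q,Q'$ have the same $\lambda_\alpha$ for all $\alpha \in \mathbb{N}_*^d$, then $Q-Q'$ has all Fourier--Legendre coefficients zero, so by completeness $Q = Q'$ a.e. on $[0;1]^d$; since both pushforward $unif$ to their respective targets, and equal maps give equal pushforwards, $\nu = \nu'$.

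The main obstacle is that Definition~\ref{def_lmom} only assumes $\mathbb{E}[\|X\|]<\infty$, i.e. $Q \in L^1$, not $L^2$, so the Hilbert-space expansion is not immediately available; a general $L^1$ function need not be the $L^2$-limit of its Legendre partial sums, and indeed need not lie in $L^2$ at all. I see two ways to handle this. The cleanest is to strengthen the hypothesis (or note it is implicitly needed) to $\mathbb{E}[\|X\|^2]<\infty$, which is the natural setting since L-moments of degree $2$ already encode second-order information; then $Q \in L^2([0;1]^d,\mathbb{R}^d)$ and everything above goes through verbatim. Alternatively, if one insists on only $L^1$, one argues on the level of the Fourier coefficients without claiming $L^2$-convergence: $\lambda_\alpha(Q) = \lambda_\alpha(Q')$ for all $\alpha$ means $\int_{[0;1]^d}(Q-Q')\,P\,dt = 0$ for every polynomial $P$ (since polynomials are finite linear combinations of the $L_\alpha$), and by density of polynomials in $C^0([0;1]^d)$ (Stone--Weierstrass, as used in the proof of Lemma~\ref{legendre}) together with $Q-Q' \in L^1$, this forces $Q = Q'$ a.e.; the series identity is then to be read as an $L^2$-statement valid whenever $Q \in L^2$ and otherwise as a formal/distributional expansion. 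I would adopt the $L^2$ hypothesis for the clean statement and remark on the $L^1$ case.
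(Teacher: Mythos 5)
Your proposal follows essentially the same route as the paper: identify $\lambda_\alpha$ as the unnormalized Fourier--Legendre coefficients of $Q$, use $\langle L_\alpha,L_\alpha\rangle=\prod_k(2i_k+1)^{-1}$ to get the reconstruction series, and invoke completeness (Lemma~\ref{legendre}) to conclude $Q=Q'$ a.e.\ and hence $\nu=\nu'$. You are right, and more careful than the paper's own proof, in flagging that the hypothesis $\mathbb{E}[\|X\|]<\infty$ only gives $Q\in L^1$, so the Hilbert-space expansion is not directly licensed; your $L^1$ workaround (vanishing integrals against all polynomials, then Stone--Weierstrass and $\|P_n-g\|_\infty\|Q-Q'\|_{L^1}\to 0$ to pass to all continuous test functions) correctly recovers $Q=Q'$ a.e.\ and $\nu=\nu'$ under the stated hypothesis, with the series identity itself requiring $\mathbb{E}[\|X\|^2]<\infty$ as you note.
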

\begin{proof}
We have to prove that if $Q$ and $Q'$ are two transports coming from $\nu$ and $\nu'$ such that all their L-moments coincide, $\nu=\nu'$.\\
We denote by $\lambda_{\alpha}$ and $\lambda'_{\alpha}$ their respective L-moments of multi-index $\alpha$.\\
As the Legendre family is orthogonal and complete in $L_2([0;1]^d,\mathbb{R})$, we can decompose each component of $Q$ :
\begin{eqnarray*}
Q(t_1,...,t_d) &=& \sum_{\alpha\in \mathbb{N}_d} \frac{\langle Q,L_{\alpha}\rangle_{L_2}}{\langle L_{\alpha},L_{\alpha}\rangle_{L_2}} L_{\alpha}(t_1,...,t_d)\\
&=& \sum_{\alpha\in \mathbb{N}_*^d} \left(\prod_{k=1}^d (2i_k+1)\right)\lambda_{\alpha}L_{\alpha}(t_1,...,t_d)
\end{eqnarray*}
because for $\alpha = (i_1,...,i_d)\in\mathbb{N}_{*}^d$
\begin{equation*}
\int_{[0;1]^d} L_{\alpha}(t_1,...,t_d)^2dt_1...dt_d = \prod_{k=1}^d ||L_{i_k}||^2_{L_2([0;1])} = \prod_{k=1}^d \frac{1}{2i_k+1}.
\end{equation*}
By the same reasoning, we get 
\begin{equation*}
Q'(t_1,...,t_d) = \sum_{\alpha\in \mathbb{N}_*^d} \left(\prod_{k=1}^d (2i_k+1)\right)\lambda'_{\alpha}L_{\alpha}(t_1,...,t_d).
\end{equation*}
We conclude that $Q=Q'$ and $\nu=\nu'$ by hypothesis.
\end{proof}

\subsection{L-moments ratios}
Let us note $\lambda_r(X)$ the r-th univariate L-moment of the random variable $X$ and $(b_1,...,b_d)$ the canonical basis of $\mathbb{R}^d$. Let us decompose the vector $\lambda_{\alpha} $ into
\begin{equation*}
\lambda_{\alpha} = \myvec{\langle \lambda_{\alpha},b_1\rangle }{\langle \lambda_{\alpha},b_d\rangle} \in\mathbb{R}^d.
\end{equation*}
\begin{definition}
\label{def_ratio}
As for univariate L-moments, we can define normalized ratios of L-moments for any multi-index $\alpha\in\mathbb{N}^d$ different from (1,\dots,1) by : 
\begin{equation}
\tau_{\alpha} = \myvec{\langle \tau_{\alpha},b_1\rangle}{\langle \tau_{\alpha},b_d\rangle} = \myvec{\frac{\langle \lambda_{\alpha},b_1\rangle}{\lambda_2(X_1)}}{\frac{\langle \lambda_{\alpha},b_d\rangle}{\lambda_2(X_d)}}.
\end{equation}
with $\lambda_2(X_i)$ denoting the univariate second L-moment related to $X_i$.
\end{definition}
This definition is guided by the following inequality : 
\begin{prop}
For all $\alpha\in\mathbb{N}_*^d$ different from (1,\dots,1), we have : 
\begin{equation}
|\langle \tau_{\alpha},e_i\rangle|\leq 2;
\end{equation}
Moreover, if $\alpha=(i_1, ..., i_d)$ with $i_j=2$ and $i_k=1$ for all $k\neq j$, let  $U=(U_1,...,U_d)^T$ be a uniform random vector on $[0;1]^d$ and $U_{-j} =(U_1,...,U_{j-1},U_{j+1},...,U_d)^T$ and $V =\mathbb{E}_{U_{-j}}[Q_i(U)]$.\\
Then
\begin{equation}
|\langle \tau_{\alpha},b_i\rangle|\leq \frac{\lambda_2(V)}{\lambda_2(X_i)}
\end{equation}
\label{prop_ratio}
\end{prop}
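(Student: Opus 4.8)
The plan is to bound, coordinate by coordinate, the scalar $\langle\lambda_{\alpha},b_i\rangle=\int_{[0;1]^d}Q_i(t_1,\dots,t_d)L_{\alpha}(t_1,\dots,t_d)\,dt_1\cdots dt_d$, where $Q_i$ is the $i$-th component of $Q$. Since $Q$ transports $unif$ onto $\nu$, the function $Q_i$ regarded on $([0;1]^d,unif)$ has the law of $X_i$, so $\lambda_2(Q_i(U))=\lambda_2(X_i)$ with $U$ uniform on $[0;1]^d$. Two elementary observations will be used throughout. First, because $\alpha\neq(1,\dots,1)$ there is a subindex $k$ with $i_k\geq 2$, hence $L_{i_k}$ is orthogonal to $L_1\equiv 1$ in $L^2([0;1])$ and $\int_{[0;1]^d}L_{\alpha}(t)\,dt=0$; therefore $\int_{[0;1]^d} Q_i L_{\alpha}\,dt=\int_{[0;1]^d} (Q_i-c)L_{\alpha}\,dt$ for every constant $c$. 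Second, each shifted Legendre polynomial satisfies $\sup_{t\in[0;1]}|L_r(t)|=1$, because $L_r(t)=P_{r-1}(2t-1)$ with $P_{r-1}$ the Legendre polynomial bounded by $1$ on $[-1;1]$; consequently $\sup_{t\in[0;1]^d}|L_{\alpha}(t)|=\prod_{k=1}^d\sup_{[0;1]}|L_{i_k}|=1$.

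For the first inequality I would center $Q_i$ at $m_i$, a median of $X_i$, and use the two observations above:
\[
|\langle\lambda_{\alpha},b_i\rangle|=\Bigl|\int_{[0;1]^d}(Q_i(t)-m_i)L_{\alpha}(t)\,dt\Bigr|\leq\Bigl(\sup_{[0;1]^d}|L_{\alpha}|\Bigr)\int_{[0;1]^d}|Q_i(t)-m_i|\,dt=\mathbb{E}|X_i-m_i|.
\]
It remains to prove $\mathbb{E}|X_i-m_i|\leq 2\lambda_2(X_i)$. Let $X_i'$ be an independent copy of $X_i$: for every fixed real $x'$ the map $c\mapsto\mathbb{E}|X_i-c|$ is minimized at any median, so $\mathbb{E}|X_i-m_i|\leq\mathbb{E}_{X_i}|X_i-x'|$, and taking the expectation over $X_i'$ gives $\mathbb{E}|X_i-m_i|\leq\mathbb{E}|X_i-X_i'|$. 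Since $\mathbb{E}|X_i-X_i'|=\mathbb{E}[X_{i,2:2}]-\mathbb{E}[X_{i,1:2}]=2\lambda_2(X_i)$ by the order-statistic definition of the second L-moment, dividing by $\lambda_2(X_i)$ yields $|\langle\tau_{\alpha},b_i\rangle|\leq 2$.

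For the refined inequality, when $i_j=2$ and $i_k=1$ for all $k\neq j$ we have $L_{\alpha}(t_1,\dots,t_d)=L_2(t_j)=2t_j-1$, so Fubini gives
\[
\langle\lambda_{\alpha},b_i\rangle=\int_0^1 g(s)(2s-1)\,ds,\qquad g(s):=\mathbb{E}_{U_{-j}}\bigl[Q_i(U_1,\dots,U_{j-1},s,U_{j+1},\dots,U_d)\bigr],
\]
and $V=g(U_j)$. I would then invoke the antisymmetrisation identity $\int_0^1\operatorname{sign}(s-s')\,ds'=2s-1$, which yields $2\int_0^1 g(s)(2s-1)\,ds=\int_0^1\int_0^1\bigl(g(s)-g(s')\bigr)\operatorname{sign}(s-s')\,ds\,ds'$; bounding $|\operatorname{sign}|\leq 1$ and recognising $\int_0^1\int_0^1|g(s)-g(s')|\,ds\,ds'=\mathbb{E}|V_1-V_2|=2\lambda_2(V)$ for $V_1,V_2$ independent copies of $V$, one gets $|\langle\lambda_{\alpha},b_i\rangle|\leq\lambda_2(V)$, hence $|\langle\tau_{\alpha},b_i\rangle|\leq\lambda_2(V)/\lambda_2(X_i)$.

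The computations are short; the two points that need care are the choice of centering constant in the first part -- it must be a \emph{median} of $X_i$, not its mean, so that $\int|Q_i-c|\,dt$ is controlled by $\mathbb{E}|X_i-X_i'|=2\lambda_2(X_i)$ -- and the uniform bound $\sup|L_{\alpha}|\leq 1$, which rests on the classical estimate $|P_n|\leq 1$ on $[-1;1]$ for Legendre polynomials. Conceptually, the second part is the statement that $h\mapsto\int_0^1 h(s)(2s-1)\,ds$ is, in absolute value, at most $\lambda_2$ of the distribution of $h(U)$ for any $h$ (monotone or not); the antisymmetrisation identity is the cleanest route, but a Hardy--Littlewood rearrangement argument comparing $g$ with the quantile function of $V$ works as well. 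All quantities are finite under the standing assumption $\mathbb{E}\|X\|<\infty$.
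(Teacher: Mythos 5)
Your proof is correct. For the first inequality you and the paper do essentially the same thing: both exploit $\int L_{\alpha}=0$ to recenter $Q_i$, bound $|L_{\alpha}|\leq 1$, and land on $\mathbb{E}|X_i-X_i'|=2\lambda_2(X_i)$; the only difference is that you center at a median and invoke its $L^1$-minimizing property, whereas the paper keeps $y$ arbitrary and averages the bound $\mathbb{E}|X_i-y|$ over an independent copy $y\sim F_i$ --- your route gives the marginally sharper intermediate quantity $\mathbb{E}|X_i-m_i|$, but the final constant is the same. For the refined inequality your argument genuinely diverges from the paper's. The paper writes $\langle\lambda_{\alpha},b_i\rangle=2\,Cov(V,W)$ with $W=U_j$ uniform, then invokes Hoeffding's covariance identity $Cov(V,W)=\int\int[F_{V,W}-F_VF_W]$ together with the Fr\'echet bounds to sandwich $Cov(V,W)$ between $\pm Cov(V,F_V(V))=\pm\tfrac12\lambda_2(V)$. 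You instead use the antisymmetrisation identity $2\int_0^1 g(s)(2s-1)\,ds=\int_0^1\int_0^1(g(s)-g(s'))\,\mathrm{sign}(s-s')\,ds\,ds'$ and bound the sign by $1$ to get $\tfrac12\mathbb{E}|V_1-V_2|=\lambda_2(V)$ directly. The two bounds coincide, but your version is more elementary (no Hoeffding lemma, no Fr\'echet bounds, no regularity discussion for $F_V$) and makes transparent the general fact that $\bigl|\int_0^1 h(s)(2s-1)\,ds\bigr|\leq\lambda_2(h(U))$ for any integrable $h$; the paper's version has the merit of exhibiting the extremal coupling (comonotonicity of $V$ and $W$) that saturates the inequality. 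Both are complete proofs under the standing hypothesis $\mathbb{E}\|X\|<\infty$.
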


\begin{proof}
Let $y\in\mathbb{R}$. Then as $\alpha\ne (1,\dots,1)$,
\begin{align*}
\langle \lambda_{\alpha},b_i\rangle &= \int_{[0;1]^d} Q_i(t_1,...,t_d)L_{\alpha}(t) dt_1...dt_d \\
&= \int_{[0;1]^d} (Q_i(t_1,...,t_d)-y)L_{\alpha}(t) dt_1...dt_d.
\end{align*}
As $|L_{\alpha}(t)|\leq 1$ for all $t\in[0;1]^d$, by definition of the transport, it holds
\begin{align*}
|\langle \lambda_{\alpha},b_i\rangle| &\leq \int_{[0;1]^d} \left|Q_i(t_1,...,t_d)-y\right|dt_1...dt_d\\
&\leq \int_0^1 \int_0^1 \left|x_i-y\right|dF_i(x_i)dF_i(y)\\
&\leq \mathbb{E}_{X_i\eqlaw Y_i}[|X_i-Y_i|] = 2\lambda_2(X_i).
\end{align*}
This proves the first assertion. The second is inspired from the proposition 4 of \cite{xiao07}.\\
As the degree of $\alpha$ is 2, there exists $1\leq j\leq d$ such that
\begin{equation*}
\langle \lambda_{\alpha},b_i\rangle = \int\left(\int Q_i(t_1,...,t_d)L_2(t_j)dt_j\right)dt_1...dt_{j-1}dt_{j+1}...dt_d.
\end{equation*}
We note $U_{-i} =(U_1,...,U_{j-1},U_{j+1},...,U_d)'$, $V =\mathbb{E}_{U_{-j}}[Q_i(U)]$ and $W=U_j$. Then by noting 
\begin{eqnarray*}
\langle \lambda_{\alpha},b_i\rangle &=& \mathbb{E}[VL_2(W)]\\
&=&2\mathbb{E}[VW] - \mathbb{E}[V]\\
&=&2Cov(V,W)
\end{eqnarray*}
where $V$ and $W$ are two random variables of finite expectation and covariance. Then, Hoeffding lemma quoted in \cite{lehmann66} gives us :
\begin{equation*}
Cov(V,W) = \int\int \left[F_{V,W}(v,w) - F_V(v)F_W(w)\right]dvdw
\end{equation*}
Moreover, the well-known Fr\'{e}chet bounds assert that for any $v,w$
\begin{equation*}
\max(F_W(w)+F_V(v)-1,0)\leq F_{v,W}(v,w) \leq \min(F_V(v),F_W(w)).
\end{equation*}
Since $W$ is uniform on $[0;1]$
\begin{equation*}
Cov(V,W) \leq \int\int \left[ \min(F_V(v),w) - F_V(v)w\right]dvdw.
\end{equation*}
Furthermore
\begin{equation*}
Cov(V,F_V(V)) = \int\int \left[ \min(F_V(v),w) - F_V(v)w\right]dvdw.
\end{equation*}
We conclude that
\begin{equation*}
Cov(V,W) \leq Cov(V,F_V(V)).
\end{equation*}
Now, using $\max(a+b-1,0)-ab=-(\min(1-a,b)-(1-a)b)$ along with the Fr\'{e}chet bound, a similar reasoning leads to 
\begin{equation*}
Cov(V,W) \geq -Cov(V,F_V(V)).
\end{equation*}
Remarking that $2Cov(V,F_V(V))=\lambda_2(V)$, we obtain
\begin{equation*}
|\langle \lambda_{\alpha},b_i\rangle|\leq \lambda_2(V).
\end{equation*}

\end{proof}

\begin{remark}
The inequality in the previous Proposition is probably not optimal but has the advantage of some generality. As we will see later, if we choose the particular bivariate Rosenblatt transport, it holds $|\langle \tau_{\alpha},b_i\rangle|\leq 1$ for $\alpha=(1,2)$ or $\alpha=(2,1)$.
\end{remark}

\subsection{Compatibility with univariate L-moments}

The definition which we adopted for the definition of general L-moments is compatible with the similar one in dimension 1 since the univariate quantile is a transport.
\begin{definition}
Let $\nu$ be a real probability measure. The quantile is the generalized inverse of the distribution function :
\begin{equation}
Q(t) = \inf \{ x\in\mathbb{R} \text{ s.t. }  \nu((-\infty;x]) \geq t \}.
\end{equation}
\end{definition}

\begin{prop}
If we denote by $\mu$ the uniform measure on $[0;1]$, then $Q\#\mu = \nu$ i.e. $Q(U)\eqlaw X$ if $U$ denotes the uniform law on $[0;1]$, and $X$ denotes the random variable associated to $\nu$.
\end{prop}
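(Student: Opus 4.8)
The plan is to show that the generalized inverse $Q$ of the distribution function pushes the uniform measure forward to $\nu$, by verifying the defining identity $Q\#\mu(B) = \mu(Q^{-1}(B))$ for all Borel sets $B$, or equivalently by checking that $Q(U)$ has distribution function $F$ where $F(x) = \nu((-\infty;x])$. I would take the second, more concrete route: it suffices to compute $\mathbb{P}(Q(U) \leq x)$ for every $x \in \mathbb{R}$ and show it equals $F(x)$, since the distribution function characterizes the measure $\nu$.

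The key step is the classical equivalence
\begin{equation*}
Q(t) \leq x \iff t \leq F(x),
\end{equation*}
valid for all $t \in (0;1)$ and $x \in \mathbb{R}$, which follows from the definition $Q(t) = \inf\{x \in \mathbb{R} \text{ s.t. } F(x) \geq t\}$ together with the right-continuity and nondecreasing nature of $F$. First I would record that $F$ is nondecreasing and right-continuous (standard properties of a cdf), so that the infimum defining $Q(t)$ is attained, i.e. $F(Q(t)) \geq t$. Then for the forward implication: if $Q(t) \leq x$ then by monotonicity $F(x) \geq F(Q(t)) \geq t$. For the reverse implication: if $t \leq F(x)$ then $x$ belongs to the set $\{y : F(y) \geq t\}$, hence $Q(t) = \inf\{y : F(y) \geq t\} \leq x$. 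With this equivalence in hand, for $U$ uniform on $[0;1]$,
\begin{equation*}
\mathbb{P}(Q(U) \leq x) = \mathbb{P}(U \leq F(x)) = F(x),
\end{equation*}
the last equality because $F(x) \in [0;1]$ and $U$ is uniform. Since this holds for all $x$, $Q(U)$ and $X$ have the same distribution function, hence $Q\#\mu = \nu$.

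I do not expect any serious obstacle here; the only mild subtlety is the handling of the endpoints $t \in \{0,1\}$ and the possibility that $Q(t) = \pm\infty$ on a $\mu$-null set, which is irrelevant since $U \in (0;1)$ almost surely, and the fact that the infimum is attained requires right-continuity of $F$ — this is the one place where a careless argument could slip. I would state these as a short remark rather than belabor them. The proof is essentially the observation that Definition 2 (the pushforward) applied to the univariate quantile reduces to the textbook statement that $F^{-1}(U) \eqlaw X$, confirming that the multivariate L-moments of Definition \ref{def_lmom} genuinely extend the univariate ones given by \eqref{eq:iintlmom}.
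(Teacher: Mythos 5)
Your proof is correct and follows essentially the same route as the paper's: both establish the key equivalence $\{Q(t)\leq x\}=\{t\leq F(x)\}$, with the reverse implication resting on the fact that $F(Q(t))\geq t$, obtained from the right-continuity and monotonicity of $F$ (the paper proves this via a decreasing sequence in $A_t$ converging to the infimum, which is the same argument you sketch when you say the infimum is attained). The concluding computation $\mathbb{P}(Q(U)\leq x)=\mathbb{P}(U\leq F(x))=F(x)$ is identical.
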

\begin{proof}
Let $x\in\mathbb{R}$. We denote by $F$ the cdf of $X$ and by $A_t$ the event
\begin{equation*}
A_t = \left\{x\in\mathbb{R} \text{   s.t.    } F(x)\geq t\right\}
\end{equation*}
We then have $Q(t) = \inf A_t$. We wish to prove :
\begin{equation}
\left\{ t\in[0;1]\text{     s.t.     } Q(t)\leq x\right\} = \left\{ t\in[0;1]\text{      s.t.     }    t\leq F(x)\right\} 
\label{equivalence}
\end{equation}
We temporarily admit this assertion. Then 
\begin{eqnarray*}
\mathbb{P}[Q(U)\leq x] &=& \mathbb{P}[U\leq F(x)]\\
 &=&F(x)
\end{eqnarray*}
which ends the proof. It remains to prove \ref{equivalence}.\\
First, the definition of $Q$ gives us 
\begin{equation*}
\left\{ t\leq F(x)\right\} \Rightarrow \left\{ x\in A_t\right\}\Rightarrow \left\{ Q(t)\leq x\right\}
\end{equation*}
Secondly, let $t$ be such that $Q(t)\leq x$. Then by monotony of $F$, $F(Q(t))\leq F(x)$. We then claim that
\begin{equation*}
Q(t)\in A_t
\end{equation*}
Indeed, let us suppose the contrary and consider a strictly decreasing sequence $x_n\in A_t$ such that
\begin{equation*}
\lim_{n\rightarrow \infty} x_n = \inf A_t = Q(t).
\end{equation*}
By right continuity of $F$ 
\begin{equation*}
\lim_{n\rightarrow \infty} F(x_n) = F(Q(t))
\end{equation*}
and, on the other hand, by definition of $A_t$,
\begin{equation*}
\lim_{n\rightarrow \infty} F(x_n) \geq t
\end{equation*}
i.e. $Q(t)\in A_t$ wihch contradicts the hypothesis. Then $Q(t)\in A_t$ i.e. $t\leq F(Q(t))$ thus $t\leq F(x)$. We have proved that 
\begin{equation*}
 \left\{ Q(t)\leq x\right\}\Rightarrow \left\{ t\leq F(x)\right\}
\end{equation*}
\end{proof}

Subsequently, if we consider the particular transport defined by the univariate quantile, the L-moments are defined by
\begin{equation}
\lambda_r = \int_0^1 Q(t)L_r(t)dt
\end{equation}
which is the quantile characterization of univariate L-moments.
\begin{remark}
This transport corresponds to a Rosenblatt transport and an optimal transport with respect to a large family of costs (see Proposition \ref{prop5}).
\end{remark}

\subsection{Relation with depth-based quantiles}

With the DOQR paradigm, Sefling related the four following notions:
\begin{itemize}
\item the \textbf{centered quantile function} : a centered multivariate quantile function $Q$ indexed by $u\in B_d$, the unit ball in $\mathbb{R}^d$ such that $x:=Q(u)$ is a centered quantile representation of $x$. $Q(0)$ represents the center of mass or median. This quantile function generates nested contours $\{Q(u) : \|u\|=c\}$ grouping points of the distribution by "distance" to the center of mass.
\item the \textbf{centered rank function} : if the quantile $Q:B_d\rightarrow \mathbb{R}^d$ has an inverse, noted $R:\mathbb{R}^d\rightarrow B_d$, it corresponds to the centered rank function. For each point $x$, $R(x)$ corresponds to the directional rank of $x$.
\item The \textbf{outlyingness function} : the magnitude $O(x) := \|R(x)\|$ defines a measure of the outlyingness of $x$.
\item The \textbf{depth function} : the magnitude $D(x) :=1-O(x)$ provides a center-outward ordering of $x$, higher depth corresponding to higher centrality.
\end{itemize}
With this paradigm, all the depth functions introduced for example in \cite{zuo00} can induce a quantile function (see \cite{serfling10}). Even if the quantile deduced from a depth function is not uniquely defined, the contours associated to the depth are unique.\\

If we note $Q$ the quantile as a transport between the uniform distribution in $[0;1]^d$ and the distribution of interest, then the function 
\begin{equation*}
\tilde{Q} := u\in[-1;1]^d \mapsto Q(\frac{u}{2}-(1/2,...,1/2)^T)
\end{equation*}
correspond to the Serfling's notion of centered quantile for the infinite norm. If $Q$ is invertible, we can therefore introduce a related depth function as
\begin{equation*}
D(x) = 1-2\|Q^{-1}(x)-(1/2,...,1/2)^T\|.
\end{equation*}
This allows us to compare this depth function with respect to the desirable criteria for a depth function enounced in \cite{zuo00} satisfied by classical depth functions such as Tukey's half-space depth function.
\begin{itemize}
\item \textbf{Affine invariance} : the depth of a point $x\in\mathbb{R}^d$ should not depend on the underlying coordinate system. This property is not verified by the depth issued from transport and should be a stake for future works.
\item \textbf{Maximality at center} : the obvious center $Q(1/2,...,1/2)$ is the point of maximal depth
\item \textbf{Monotonicity relative to deepest point} : as the point $x\in\mathbb{R}^d$ moves away from the center of mass, the depth function evaluated on $x$ decreases monotonically. This intuitive property should restrict the transports acceptable for $Q$ to be a quantile. For monotone and Rosenblatt transports introduced in the sequel, this property holds.
\item \textbf{Vanishing at infinity} : the depth of a point $x$ should approach zero as $\|x\|$ approaches infinity. 
\end{itemize}

The quantile function issued from a transport brings moreover indications on the location of the mass of the multivariate distribution of measure $\nu$. Indeed, all intuitive information of a "piece" of the unit cube (centrality, extremality, volume,...) can be transposable to the transported piece of points in $\mathbb{R}^d$. In mathematical terms, if $A$ is Borelian of $[0;1]^d$, it holds :
\begin{equation*}
\nu(Q(A)) = \mu(A)= vol(A)
\end{equation*}

We will now consider in the following two different kinds of transport among many others :
\begin{itemize}
\item the optimal transport
\item the Rosenblatt transport
\end{itemize}

\section{Optimal transport}
\subsection{Formulation of the problem and main results}

Let us consider two measures $\mu$ and $\nu$ respectively defined on $\Omega\subset\mathbb{R}^d$ and $\mathbb{R}^d$. If we define a cost function $c : \Omega\times\Omega \rightarrow \mathbb{R}$, then the problem is to find an application $T$ that transports $\mu$ into $\nu$ and minimizes :
\begin{equation}
\int_{\Omega} c(x,T(x)) d\mu(x).
\end{equation}

The quadratic case $c:(x,y)\mapsto (x-y)^2$ was first studied by Brenier \cite{brenier91}, the generalization to generic costs has been considered, among others, by McCann, Gangbo, Villani \cite{mccann95}\cite{villani04}. Let us give the following theorem for specific convex costs $(x,y)\mapsto c(x,y)=h(x-y)$ :
\begin{theorem}(McCann, Gangbo)\\
Let $h: \mathbb{R}^d\rightarrow \mathbb{R}$ be a convex function, $\mu$ and $\nu$ be two probability measures on $\mathbb{R}^d$. Let us suppose that there exists a transport $T$ such that $\int_{\mathbb{R}^d} h(x-T(x)) d\mu(x)<\infty$. Let us assume that $\mu$ is absolutely continuous with respect to the Lebesgue measure. \\
Then, there exists a unique transport $T$ from $\mu$ to $\nu$ that minimizes the cost $\int_{\Omega} h(x-T(x)) d\mu(x)$ determined $d\mu$-almost everywhere and characterized by a function $\phi$ :
\begin{equation}
T(x) = x - \nabla h^*\left(\nabla\phi(x)\right)
\end{equation}
where $h^*$ is the Legendre transform of $h$. 
\begin{equation*}
h^*(y) = \sup_{x\in\mathbb{R}^d} \langle x,y\rangle -h(x).
\end{equation*}
The function $\phi$ is $d\mu$-a.s. unique up to an arbitrary additive constant.
\label{gangbo}
\end{theorem}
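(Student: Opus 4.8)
The plan is to pass from transport \emph{maps} to transport \emph{plans}, solve the relaxed problem by a soft compactness argument, extract a Kantorovich potential by duality, and then use the convexity of $h$ together with $\mu\ll\mathrm{Leb}$ to show that the optimal plan is in fact supported on the graph of a map of the announced form. First I would introduce $\Pi(\mu,\nu)$, the set of Borel probability measures on $\mathbb{R}^d\times\mathbb{R}^d$ with marginals $\mu$ and $\nu$. Since the two marginals are fixed, $\Pi(\mu,\nu)$ is tight, hence weakly sequentially compact, and $\pi\mapsto\int h(x-y)\,d\pi$ is weakly lower semicontinuous because a finite convex function $h$ on $\mathbb{R}^d$ is continuous and bounded below on compact sets. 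The hypothesis that some transport $T$ has finite cost shows that $\inf_{\Pi(\mu,\nu)}\int h(x-y)\,d\pi<\infty$, so a minimizing plan $\pi^*$ exists; any transport map realizing the map-infimum would be an admissible plan, so it suffices to recover a map from $\pi^*$.

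Next I would invoke Kantorovich duality for the cost $c(x,y)=h(x-y)$: there is a $c$-concave potential $\phi$, with $c$-transform $\phi^c(y)=\inf_x\bigl(h(x-y)-\phi(x)\bigr)$, such that $\phi(x)+\phi^c(y)\le h(x-y)$ for all $(x,y)$ and $\pi^*$ is concentrated on the contact set $\Gamma=\{(x,y):\phi(x)+\phi^c(y)=h(x-y)\}$. For any $(x,y)\in\Gamma$ the map $x'\mapsto h(x'-y)-\phi(x')$ attains its global minimum at $x'=x$, since it is everywhere $\ge\phi^c(y)$ with equality at $x$; this is where convexity of $h$ will be used.

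The differentiation step is what I expect to be the main obstacle. Because $\phi$ is $c$-concave and $h$ is locally Lipschitz, $\phi$ is locally Lipschitz, hence Lebesgue-a.e.\ differentiable by Rademacher's theorem, hence $\mu$-a.e.\ differentiable since $\mu\ll\mathrm{Leb}$. At a differentiability point $x$ of $\phi$ with $(x,y)\in\Gamma$, the minimality of $x'\mapsto h(x'-y)-\phi(x')$ gives, after dividing the difference quotient along an arbitrary direction and passing to the limit, that $\langle\nabla\phi(x),v\rangle\le\max_{p\in\partial h(x-y)}\langle p,v\rangle$ for every $v$, i.e.\ $\nabla\phi(x)\in\partial h(x-y)$, equivalently $x-y\in\partial h^*(\nabla\phi(x))$ with $h^*$ the Legendre transform. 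Using (strict) convexity of $h$, so that $h^*$ is differentiable where it is finite, $\partial h^*(\nabla\phi(x))$ reduces to the single point $\nabla h^*(\nabla\phi(x))$; hence the second coordinate is determined by the first, $y=T(x):=x-\nabla h^*(\nabla\phi(x))$, for $\mu$-a.e.\ $x$. Consequently $\pi^*=(\mathrm{id},T)_\#\mu$, so $T$ is a transport map from $\mu$ to $\nu$ of the claimed form, and, being the reduction of an optimal plan, it is optimal; the technical care goes entirely into the regularity of $\phi$ and into ensuring the inclusion $x-y\in\partial h^*(\nabla\phi(x))$ is single-valued $\mu$-a.e.

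Finally, for uniqueness I would note that every optimal plan is concentrated on a $c$-cyclically monotone set, so the same differentiation argument applies to it: its second coordinate is $\mu$-a.e.\ equal to $x-\nabla h^*(\nabla\phi(x))$ once $\phi$ is fixed, and $\phi$ is determined up to an additive constant (on the support of $\mu$, via a connectedness argument along the contact set), so $\nabla\phi$—and therefore $T$—is determined $d\mu$-a.e. This yields both the $d\mu$-a.e.\ uniqueness of $T$ and the $d\mu$-a.s.\ uniqueness of $\phi$ up to an arbitrary additive constant. The existence-by-compactness part and the duality step are comparatively routine; the crux is the Rademacher/subdifferential selection that turns the plan into a map.
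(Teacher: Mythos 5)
The paper's own proof of Theorem \ref{gangbo} is a one-line citation of Theorem 2.44 of \cite{villani04}; what you have written is essentially a reconstruction of the Gangbo--McCann argument behind that citation: relax to Kantorovich plans, get existence by tightness and lower semicontinuity, extract a $c$-concave potential from duality, and use Rademacher's theorem plus the Fenchel inclusion $\nabla\phi(x)\in\partial h(x-y)\iff x-y\in\partial h^*(\nabla\phi(x))$ to collapse the optimal plan onto a graph. The skeleton is the right one and you have correctly located the hard step. Three points deserve attention. First, and most importantly, your argument needs $h$ \emph{strictly} convex (the cited theorem also assumes superlinearity): you invoke strict convexity only parenthetically to make $\partial h^*(\nabla\phi(x))$ a singleton, but the statement as given assumes mere convexity, and for merely convex $h$ (e.g.\ $h(z)=\|z\|$) both the single-valuedness and the uniqueness genuinely fail; so either the hypothesis must be strengthened to match the reference, or this step collapses --- this is really a defect of the statement, but your proof cannot avoid it. Second, weak lower semicontinuity of $\pi\mapsto\int h(x-y)\,d\pi$ requires more than boundedness below on compact sets: a finite convex $h$ is only bounded below by an affine function globally, so one needs either finite first moments of $\mu$ and $\nu$ (to subtract off the affine part) or the standard truncation argument; similarly, the local Lipschitz bound on $\phi=\inf_y\bigl(h(\cdot-y)-\phi^c(y)\bigr)$ is not uniform in $y$ when $h$ grows superlinearly, so the infimum must first be localized to a bounded set of $y$. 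Third, for uniqueness the cleaner route is not to argue that $\phi$ itself is determined up to a constant (it need not be, off the support of $\mu$), but to observe that the average of two optimal plans is again optimal, hence concentrated on a graph by the same differentiation argument, which forces the two plans --- and therefore the two maps --- to coincide $\mu$-a.e.
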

\begin{proof}
We apply Theorem 2.44 of \cite{villani04}.
\end{proof}

\begin{remark}
If we consider the quadratic case $h(x-y) =(x-y)^2$, the above existence theorem is equivalent to the existence of another function (that will be called potential function) $\varphi:=x\mapsto\|x\|^2 - \phi(x)$ which is convex such that the optimal transport is $T = \nabla\varphi$. We can observe a refinement of this case in the following Proposition \ref{prop_brenier}.
\end{remark}

For the definition of a multivariate quantile, $\mu$ is the uniform measure on $\Omega=[0;1]^d$ and $\nu$ is the measure of a random vector $X$ of interest. The corresponding transport will be denoted by $Q$.\\
As $\mu$ is absolutely continuous with respect to the Lebesgue measure, the remaining assumption in Theorem \ref{gangbo} is the existence of a transport $Q$ such that $Q\#\mu=\nu$ and $\int_{\Omega} h(Q(u)-u) du<\infty$.\\
We can remove this limitation by considering source measures $\mu$ that give no mass to "small sets". To make the term "small set" more precise, we use the Hausdorff dimension.
\begin{definition}
Let $E$ be a metric space. If $S\subset  E$ and $p\geq 0$, the p-dimensional Hausdorff content of $S$ is defined by :
\begin{equation*}
C_d(S) = \inf \left\{  \sum_i r_i^p\text{  such that there is a cover of $S$ by balls with radii $r_i>0$} \right\}.
\end{equation*}
Then, the Hausdorff dimension of $E$ is given by :
\begin{equation}
dim(E) := \inf\left\{ d\geq 0 \text{   such that $C_d(E) = 0$}\right\}
\end{equation}
\end{definition}

\begin{prop}(McCann/Brenier's Theorem)\\
\label{prop_brenier}
Let $\mu, \nu$ be two probability measures on $\mathbb{R}^d$, such that $\mu$ does not give mass to sets of Hausdorff dimension at most $d-1$. Then, there is exactly one measurable map $T$ such that $T\#\mu=\nu$ and $T=\nabla\varphi$ for some convex function $\varphi$, in the sense that any two such maps coincide $d\mu$-almost everywhere.
\end{prop}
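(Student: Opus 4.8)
The proposition is McCann's measure-theoretic sharpening of Brenier's polar factorization theorem, and the route I would take passes through Kantorovich's relaxation, the cyclical monotonicity of optimal plans (Rockafellar), and the differentiability theory of convex functions. \emph{Existence.} The first goal is to produce a coupling $\gamma$ of $\mu$ and $\nu$ — a probability measure on $\mathbb{R}^d\times\mathbb{R}^d$ with first marginal $\mu$ and second marginal $\nu$ — whose support is cyclically monotone, i.e. $\sum_{k=1}^{n}\langle y_k,\,x_{k+1}-x_k\rangle\le 0$ for every finite family $(x_1,y_1),\dots,(x_n,y_n)$ in the support (indices mod $n$). When $\mu$ and $\nu$ have finite second moments, such a $\gamma$ is obtained as a minimiser of $\gamma\mapsto\int\lVert x-y\rVert^2\,d\gamma$: the set of couplings is weakly compact (tightness of $\mu,\nu$ on the Polish space $\mathbb{R}^d$ plus Prokhorov's theorem) and the cost is lower semicontinuous, so a minimiser exists, and the usual ``swapping'' argument shows its support is cyclically monotone. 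The finite-moment hypothesis — which is exactly what distinguishes this statement from Theorem~\ref{gangbo} — is removed by truncation: restrict and renormalise $\mu$ and $\nu$ to balls of radius $R$, take the corresponding cyclically monotone optimal plans, and pass to a weakly convergent subsequence as $R\to\infty$; cyclical monotonicity of the support is stable under such limits. By Rockafellar's theorem a cyclically monotone set lies in the graph of the subdifferential $\partial\varphi$ of a proper, lower semicontinuous, convex $\varphi:\mathbb{R}^d\to\mathbb{R}\cup\{+\infty\}$, so $\gamma$ is concentrated on $\{(x,y):y\in\partial\varphi(x)\}$.

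\emph{From plan to map.} Here the hypothesis on $\mu$ enters. The first marginal $\mu$ is concentrated on $\mathrm{dom}(\partial\varphi)$, which is $\mathrm{int}(\mathrm{dom}\,\varphi)$ together with a subset of $\partial(\mathrm{dom}\,\varphi)$. Since $\mathrm{dom}\,\varphi$ is convex and $\mu$ is a probability measure, $\mathrm{dom}\,\varphi$ must have nonempty interior (a lower-dimensional convex set has Hausdorff dimension $\le d-1$ and so is $\mu$-null), and $\partial(\mathrm{dom}\,\varphi)$, being the boundary of a convex set, has Hausdorff dimension at most $d-1$. On $\mathrm{int}(\mathrm{dom}\,\varphi)$, $\varphi$ is locally Lipschitz and the set $\Sigma$ of its non-differentiability points is contained in a countable union of Lipschitz hypersurfaces, hence also has Hausdorff dimension at most $d-1$. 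By assumption $\mu$ charges neither set, so for $\mu$-almost every $x$ the subdifferential $\partial\varphi(x)$ is the singleton $\{\nabla\varphi(x)\}$. Disintegrating $\gamma=\int\delta_x\otimes\gamma_x\,d\mu(x)$ and using that $\gamma$ sits on the graph of $\partial\varphi$ forces $\gamma_x=\delta_{\nabla\varphi(x)}$ for $\mu$-a.e. $x$; therefore $\gamma=(\mathrm{id},\nabla\varphi)\#\mu$, and $T:=\nabla\varphi$ is a transport map from $\mu$ to $\nu$.

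\emph{Uniqueness and main obstacle.} If $T_1=\nabla\varphi_1$ and $T_2=\nabla\varphi_2$ both push $\mu$ to $\nu$, then $\gamma=\tfrac12\big((\mathrm{id},T_1)\#\mu+(\mathrm{id},T_2)\#\mu\big)$ is again a coupling of $\mu$ and $\nu$; in the finite-moment case it is an average of two minimisers, hence a minimiser, hence concentrated on the graph of the subdifferential of a single convex $\psi$ (the general case once more reduces to this by truncation). Applying the differentiability step to $\psi$, $\partial\psi(x)$ is a singleton for $\mu$-a.e. $x$ yet contains both $T_1(x)$ and $T_2(x)$, so $T_1=T_2$ $\mu$-almost everywhere. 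The main obstacle is the moment-free part of existence: without finite cost the minimisation problem is vacuous, so one must argue by truncation and verify that cyclical monotonicity of supports survives the weak limit. The other non-elementary input is the geometric measure theory fact that a convex function is differentiable off a set of Hausdorff dimension at most $d-1$, which is precisely what makes the hypothesis on $\mu$ the right one; both ingredients can be imported from \cite{villani04,mccann95,villani09}.
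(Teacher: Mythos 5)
Your existence argument is sound and follows the standard McCann route (which is also what the paper's cited reference, Theorem 2.32 of Villani, does): build a coupling with cyclically monotone support by approximation and weak limits, invoke Rockafellar to place that support in the graph of $\partial\varphi$, and then use the hypothesis on $\mu$ together with the fact that a convex function is differentiable outside a set of Hausdorff dimension at most $d-1$ (Zaj\'{\i}\v{c}ek) and that $\partial(\mathrm{dom}\,\varphi)$ is likewise small, to collapse the plan to the map $\nabla\varphi$. The stability of cyclical monotonicity under weak limits that you need is exactly Lemma 9 of McCann, which the paper itself restates later as Lemma \ref{lemma_mccann}. Note that the paper offers no proof of its own here, only the citation, so up to this point you have supplied more than the source does.

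The genuine gap is in the uniqueness step for the moment-free case. Your averaging argument works when $\mu$ and $\nu$ have finite second moments: each $(\mathrm{id},T_i)\#\mu$ is optimal because a plan with cyclically monotone support is optimal for finite quadratic cost, hence the average is optimal, hence supported on the graph of a single subdifferential. But the sentence ``the general case once more reduces to this by truncation'' does not go through. If you restrict and renormalise $\mu$, the two maps $T_1$ and $T_2$ push the truncated source to two \emph{different} measures, so you no longer have two monotone maps transporting the same source to the same target and cannot invoke the finite-moment uniqueness; truncating in the target instead produces two different restricted sources $\mu|_{T_1^{-1}(B_R)}$ and $\mu|_{T_2^{-1}(B_R)}$, with the same problem. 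Nor can you bypass optimality by arguing directly that the averaged plan has cyclically monotone support: the union of two cyclically monotone graphs need not be cyclically monotone. This is precisely why McCann's uniqueness proof is a separate, direct geometric argument: assuming $\nabla\varphi(x_0)\neq\nabla\psi(x_0)$ at a common differentiability and density point, one compares the $\mu$-measures of the preimages under $\nabla\varphi$ and $\nabla\psi$ of a half-space separating the two gradients, using monotonicity of the subdifferentials to show these preimages cannot both push forward to the same $\nu$-mass. Without an argument of this type (or an honest limiting procedure whose details you would have to supply), the uniqueness claim is only established under the very moment hypothesis whose removal is the point of the proposition.
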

\begin{proof}
Theorem 2.32 of \cite{villani04}
\end{proof}

\begin{remark}
When $\mu$ is the uniform measure on $[0;1]^d$, Proposition \ref{prop_brenier} holds for any $\nu$.
\end{remark}

The gradient of convex potentials are called monotone by analogy with the univariate case. We can see this gradient as the solution of a potential differential equation. By abuse of language, we will refer at this transport as monotone transport in the sequel.
\begin{remark}
Let us suppose $\mu$ and $\nu$ admit densities with respect to the Lebesgue measure respectively denoted by $p$ and $q$. Proposition \ref{prop_brenier} provides a mapping $\nabla\varphi$ such that for all test functions $a$ in $C^{\infty}$ with a compact support :
\begin{equation*}
\int a(y)q(y)dy = \int a(\nabla \varphi(x))p(x)dx.
\end{equation*}
Let us assume furthermore that $\nabla\varphi$ is $C^1$ and bijective. We can then perform the change of variables $y=\nabla\varphi(x)$ on the left-hand side of the previous equality : 
\begin{equation*}
\int a(y)q(y)dy = \int a(\nabla \varphi(x)) q(\nabla \varphi(x)) \det(\nabla^2\varphi(x)) dx.
\end{equation*}
Since the function $a$ is arbitrary, we get :
\begin{equation}
\label{monge_ampere}
p(x) = q(\nabla \phi(x))) \det(\nabla^2 \phi(x)).
\end{equation}
This is a particular case of the general Monge-Amp\`{e}re equation
\begin{equation*}
\det(\nabla^2\varphi(x)) = F(x,\varphi(x),\nabla\varphi(x)).
\end{equation*}

\end{remark}

\subsection{Optimal transport in dimension 1}

The natural order of the real line implies that the quantile is a solution of several transport problems :
\begin{prop}(Optimal transport in dimension $d=1$)\\
\label{prop5}
Let $\mu$ and $\nu$ be two arbitrary measures respectively defined on $[0;1]$ and $\mathbb{R}$ such that $\mu$ gives no mass to atoms. Let $T : [0;1]\rightarrow \mathbb{R}$ be a transport of $\mu$ onto $\nu$. Then for any real convex function $h$ :
\begin{equation}
\int_0^1 h(Q(F(u))-u) du \leq \int_0^1 h(T(u)-u)) du 
\end{equation}
where $Q$ is the generalized quantile of $\nu$ and $F$ the cdf of $\mu$ i.e. $Q\circ F$ is the solution of the univariate transport problem.
\end{prop}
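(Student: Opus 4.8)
The plan is to recognise $Q\circ F$ as the \emph{monotone} transport of $\mu$ onto $\nu$ and to invoke the fact that, in dimension one, the monotone coupling is optimal for every cost of the form $c(u,y)=h(y-u)$ with $h$ convex; I read the functional $\int_0^1 h(T(u)-u)\,du$ as the Monge cost, that is, integration against the source measure.

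The first step is to check that $Q\circ F$ is a transport of $\mu$ onto $\nu$. Since $\mu$ has no atoms, $F$ is continuous and nondecreasing, so the probability integral transform gives that $F$ pushes $\mu$ forward to the uniform law on $[0;1]$; by the Proposition asserting that the univariate quantile transports the uniform law onto its distribution, $Q$ pushes the uniform law forward to $\nu$, whence $Q\circ F$ pushes $\mu$ forward to $\nu$. Moreover $Q\circ F$ is nondecreasing, as a composition of nondecreasing maps. Writing $\pi^{\star}=(\mathrm{id},Q\circ F)\#\mu$ and $\pi_{T}=(\mathrm{id},T)\#\mu$, these are couplings of $\mu$ and $\nu$, the two integrals in the statement are $\int h(y-u)\,d\pi^{\star}(u,y)$ and $\int h(y-u)\,d\pi_{T}(u,y)$, and it suffices to show that $\pi^{\star}$ minimises $\int h(y-u)\,d\pi$ over all couplings $\pi$ of $\mu$ and $\nu$; we may assume the right-hand side is finite, otherwise there is nothing to prove.

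The second step uses convexity of $h$ through the submodularity (Monge condition) of $c(u,y)=h(y-u)$: if $u<u'$ and $y<y'$ then
\begin{equation*}
h(y-u)+h(y'-u')\le h(y-u')+h(y'-u),
\end{equation*}
because $y-u'$ and $y'-u$ bracket both $y-u$ and $y'-u'$, which, the two pairs having equal sum, lets one write each left-hand argument as a convex combination of the right-hand ones with matching weights, so the inequality is Jensen's. Submodularity makes the support of $\pi^{\star}$ --- a nondecreasing subset of $[0;1]\times\mathbb{R}$ --- $c$-cyclically monotone: for finitely many points of a nondecreasing set and any permutation, pairing abscissas and ordinates in increasing order has the least total cost, since one reaches the sorted pairing from any permutation by successive transpositions of out-of-order pairs, each not increasing the cost by the displayed inequality. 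By the sufficiency of $c$-cyclical monotonicity of the support for optimality (see \cite{villani04}, and \cite{villani09} for the one-dimensional situation), $\pi^{\star}$ is optimal, which is the claimed inequality. Here $h$ is continuous, being convex on $\mathbb{R}$, and subtracting an affine minorant of $h$ --- whose $\pi$-integral does not depend on the coupling --- reduces matters to $h\ge 0$, which also yields the integrability of $c$ against $\pi^{\star}$ by monotone convergence on truncations of $\nu$. Specialising to $\mu$ uniform, so $F=\mathrm{id}$, gives the assertion of the preceding remark that $\lambda_{r}=\int_{0}^{1}Q(u)L_{r}(u)\,du$ is associated with a transport optimal for every convex cost.

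The main obstacle is the passage from the pointwise submodularity inequality to the global optimality of $\pi^{\star}$. I would handle it either as above, via $c$-cyclical monotonicity and its sufficiency for optimality together with the short truncation argument for integrability, or, for a self-contained proof, by a Hardy--Littlewood--P\'olya rearrangement argument: approximate $\pi_{T}$ by finitely supported couplings, lower the cost of each by the adjacent-transposition argument, and pass to the limit. A minor technical point, and the only place where the hypothesis that $\mu$ has no atoms is used, is the identity that $F$ pushes $\mu$ forward to the uniform law on $[0;1]$.
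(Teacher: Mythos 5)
Your argument is correct, but it is a genuinely different route from the paper's: the paper disposes of Proposition \ref{prop5} in one line by citing Theorem 6.0 of \cite{ambrosio05}, whereas you essentially prove the cited theorem. Your two key ingredients --- the identification of $Q\circ F$ as the monotone transport (using atomlessness of $\mu$ only to get $F\#\mu=unif$), and the submodularity inequality $h(y-u)+h(y'-u')\le h(y-u')+h(y'-u)$ for $u<u'$, $y<y'$, upgraded to $c$-cyclical monotonicity of the monotone support by adjacent transpositions --- are exactly the classical mechanism behind that theorem, so the reader gains a self-contained explanation of \emph{why} the quantile coupling is optimal, at the cost of importing the (nontrivial) sufficiency of $c$-cyclical monotonicity for optimality, whose standard statements carry continuity and finiteness hypotheses that you handle only sketchily (the affine-minorant reduction is delicate when $\int \|y\|\,d\nu=\infty$, and the "monotone convergence on truncations" step is really the Hardy--Littlewood--P\'olya approximation argument you offer as an alternative; either route can be made airtight, but one of the two should be carried out in full rather than both half-done). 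Two further points worth noting: you correctly read the integrals $\int_0^1 \cdots\,du$ in the statement as integration against the source measure $\mu$ --- taken literally as Lebesgue integrals the statement would be false for general $\mu$, so your interpretation is the intended one; and your closing observation that the case $\mu=unif$, $F=\mathrm{id}$ recovers the compatibility with univariate L-moments matches the use the paper makes of this proposition immediately afterwards.
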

\begin{proof}
We refer to Theorem 6.0 \cite{ambrosio05}.
\end{proof}

This result is particular to the dimension 1. If we plug this result in the definition we gave for multivariate L-moments applied with $d=1$, we obtain the univariate definition of L-moments : 
\begin{equation}
\lambda_r = \int_0^1 Q(t)L_r(t)dt.
\end{equation}
The multivariate L-moments defined with the optimal transport are then compatible with the definition in dimension $d=1$.

\subsection{Examples of monotone transports}
\begin{example}(Univariate Gaussian)\\
Let us consider the univariate Gaussian $\mathcal{N}_{m,\sigma}$ with $m\in\mathbb{R}$ and $\sigma>0$. The potential is then defined up to a constant by :
\begin{equation}
\text{for } t\in[0;1],\ \ \phi_{m,\sigma}(t) = \int_{1/2}^t \left(m + \sigma\mathcal{N}^{-1}(u)\right) du
\end{equation}
where $\mathcal{N}$ is the cumulative distribution function of the standard Gaussian. 

\begin{figure}[h!]
\centering
\includegraphics[height=2.5in]{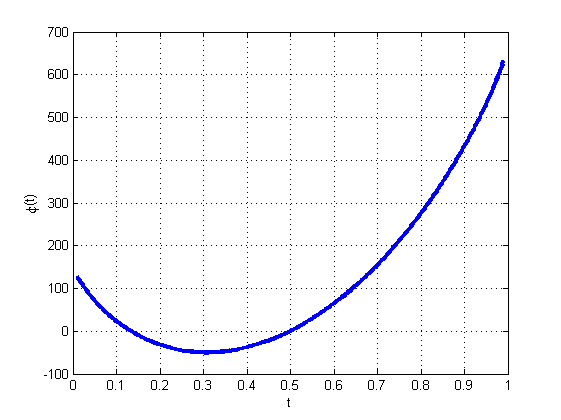}
\caption{Potential function $\phi_{m,\sigma}$ for $m=1$ $\sigma=2$}
\end{figure}

Let us note that the potential is minimum at the cumulative weight $t$ such that $\mathcal{N}_{m,\sigma}^{-1}(t)=0$ and is equal to $0$ at the median.\\
The gradient is simply the quantile
\begin{equation*}
\nabla\phi_{m,\sigma}(t) = m + \sigma\mathcal{N}^{-1}(t) = Q_{\mathcal{N}}(t).
\end{equation*}

\begin{figure}[h!]
\centering
\includegraphics[height=2.5in]{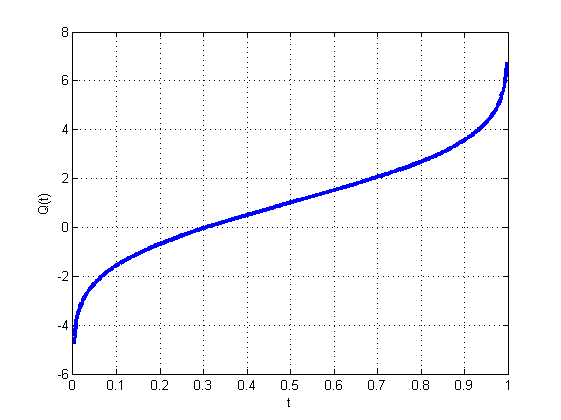}
\caption{Quantile function $\nabla\phi_{m,\sigma}$ for $m=1$ $\sigma=2$}
\end{figure}

If we build the Legendre transform of the potential, we find a dual potential :
\begin{equation*}
\psi_{m,\sigma}(x) =  \sup_{t\in[0;1]} \{xt - \phi_{m,\sigma}(t)\} = x\mathcal{N}\left(\frac{x-m}{\sigma}\right) - \frac{1}{\sigma}\int_m^x x\mathcal{N}'\left(\frac{x-m}{\sigma}\right) dy,
\end{equation*}
and
\begin{equation*}
\nabla\psi_{m,\sigma}(x) = \mathcal{N}\left(\frac{x-m}{\sigma}\right).
\end{equation*}
Finally the Hessian of the dual potential is the density of the Gaussian as expected by the Monge-Amp\`{e}re equation
\begin{equation*}
\nabla^2\psi_{m,\sigma}(x) = \frac{1}{\sigma}\mathcal{N}'\left(\frac{x-m}{\sigma}\right) =  \frac{1}{\sigma\sqrt{2\pi}}\exp\left(-\frac{(x-m)^2}{2\sigma^2}\right).
\end{equation*}

\end{example}

\begin{example}(Independent coordinates)\\
\label{ex_independent}
For a vector of independent marginals $(X_1, ..., X_d)$, the optimal transport is easily obtained since it is the concatenation of each marginal univariate quantile: 
\begin{equation}
Q(t_1,...,t_d) = \left(\begin{array}{cc} Q_1(t_1)\\ \vdots\\ Q_d(t_d) \end{array}\right)
\end{equation}
if $Q_1, ..., Q_d$ are the respective quantiles of $X_1,..., X_d$.\\
Indeed, it is obvious that the mapping $Q$ defined above transports the uniform measure on $[0;1]^d$ into the distribution of $(X_1, ..., X_d)$.\\
Furthermore, as $Q_1, ..., Q_d$ are univariate transports, they are gradients of convex functions that can be denoted by respective potentials $\phi_1,...,\phi_d : [0;1] \rightarrow \mathbb{R}$. Then, if we build the potential :
\begin{equation}
\phi(t_1,..., t_d) = \phi_1(t_1) + \dots + \phi_d(t_d),
\end{equation}
we remark that $\nabla \phi = Q$ and $\phi$ is convex because each $\phi_i$ is convex.
\end{example}

\begin{example}(Max-Copula)\\
\label{maxcopula}
Let us define a 2-dimensional potential for $u,v\in[0;1]^2$: 
\begin{equation}
\phi(u,v) = \frac{1}{4}(u+v)^2.
\end{equation}

\begin{figure}[h!]
\centering
\includegraphics[height=2.5in]{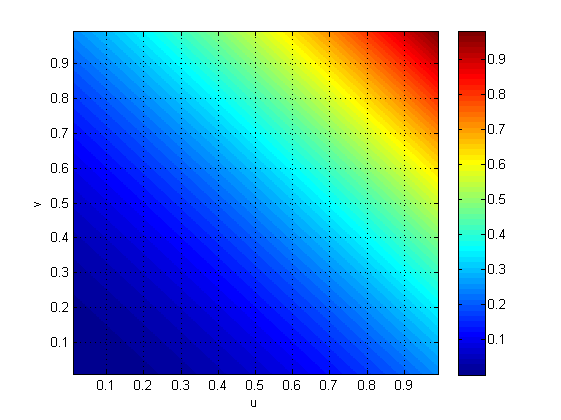}
\caption{Color levels of the potential function $\phi$}
\end{figure}
$\phi$ is convex (but not strictly convex) and derivable almost everywhere; the associated transport is
\begin{equation*}
T(u,v) = \nabla \phi(u,v) = \myvecz{\frac{u+v}{2}}{\frac{u+v}{2}}
\end{equation*}
T then transports the uniform distribution on $[0;1]^2$ into the distribution defined by the cdf $F(u,v) = \min(u,v)$.
\end{example}
The distribution considered in the previous example corresponds to the max-copula. A copula induces a distribution defined on $[0;1]^2$ with uniform margins and is a measure of the dependence for a bivariate random vector.\\

\section{L-moments issued from the monotone transport}
From now on, the notion of optimal transport will uniquely refer to the monotone case.

\subsection{Monotone transport from the uniform distribution on $[0;1]^d$}
Let $X\in\mathbb{R}^d$ be a random vector. According to Brenier's Theorem, there exists a potential $\varphi:[0;1]^d\rightarrow \mathbb{R}^d$ such that 
\begin{equation*}
\nabla\varphi(U) \eqlaw X.
\end{equation*}
The $\alpha$-th multivariate L-moment associated to this potential is
\begin{equation}
\lambda_{\alpha} = \int_{[0;1]^d} \nabla\varphi(t)L_{\alpha}(t)dt.
\end{equation}

We keep the property of invariance with respect to translation and equivariance with respect to dilatation coming from the univariate L-moments
\begin{prop}
Let $X$ be a random vector in $\mathbb{R}^d$ and $\lambda_{\alpha}(X)$ its associated L-moments such that
\begin{equation}
\lambda_{\alpha}(X) = \int_{[0;1]^d} \nabla\varphi(t)L_{\alpha}(t)dt
\end{equation}
with $\nabla\varphi$ the transport from the uniform on $[0;1]^d$ onto $X$ and $\varphi$ convex.\\
Let $m\in\mathbb{R}^d$ and $\sigma>0$. Then
\begin{equation}
\lambda_{\alpha}(m+\sigma X) = \sigma\lambda_{\alpha} + m\mathds{1}_{\alpha=(1...1)}
\end{equation}
\label{equivariance1}
\end{prop}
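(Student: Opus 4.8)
The statement asserts two things at once: the transport for $m + \sigma X$ is essentially $m + \sigma \nabla\varphi$, and this has the claimed effect on the L-moments. The plan is to handle the transport first. Given that $\nabla\varphi$ pushes the uniform measure on $[0;1]^d$ onto the law of $X$, I would check that the map $t \mapsto m + \sigma \nabla\varphi(t)$ pushes the uniform measure onto the law of $m + \sigma X$: this is immediate from the definition of pushforward, since for a Borel set $B$, $(m + \sigma\nabla\varphi)^{-1}(B) = \nabla\varphi^{-1}\bigl(\tfrac{B-m}{\sigma}\bigr)$, which has uniform measure $\nu\bigl(\tfrac{B-m}{\sigma}\bigr) = \mathbb{P}[\sigma X + m \in B]$.

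Next I would argue that this map is \emph{the} monotone transport for $m + \sigma X$, i.e.\ it is the gradient of a convex function, so that the L-moment of $m + \sigma X$ (as defined via the monotone transport) is genuinely computed from it. Here I set $\psi(t) := m\cdot t + \sigma\varphi(t)$; then $\nabla\psi(t) = m + \sigma\nabla\varphi(t)$, and $\psi$ is convex since $\sigma > 0$ and $\varphi$ is convex ($m\cdot t$ is linear, hence convex, and the sum of convex functions is convex). By the uniqueness clause in Proposition \ref{prop_brenier} (which applies for any target $\nu$ when the source is the uniform measure on $[0;1]^d$, per the remark following it), $\nabla\psi$ coincides $du$-almost everywhere with the monotone transport of $m + \sigma X$. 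Hence $\lambda_\alpha(m + \sigma X) = \int_{[0;1]^d} \bigl(m + \sigma\nabla\varphi(t)\bigr) L_\alpha(t)\, dt$.

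Finally I would split this integral by linearity: $\lambda_\alpha(m+\sigma X) = m \int_{[0;1]^d} L_\alpha(t)\,dt + \sigma \int_{[0;1]^d} \nabla\varphi(t) L_\alpha(t)\, dt = m\int_{[0;1]^d} L_\alpha(t)\,dt + \sigma\lambda_\alpha(X)$. It remains to evaluate $\int_{[0;1]^d} L_\alpha(t)\,dt = \prod_{k=1}^d \int_0^1 L_{i_k}(t_k)\,dt_k$. Since $L_1 \equiv 1$ and $\int_0^1 L_{i}(t)\,dt = \langle L_i, L_1\rangle = 0$ for $i \geq 2$ by orthogonality of the shifted Legendre polynomials, the product equals $1$ when every $i_k = 1$ (i.e.\ $\alpha = (1,\dots,1)$) and $0$ otherwise. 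This yields exactly $\lambda_\alpha(m + \sigma X) = \sigma\lambda_\alpha(X) + m\,\mathds{1}_{\alpha = (1\dots1)}$.

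\textbf{Main obstacle.} The only delicate point is the identification of the monotone transport for $m + \sigma X$ with $\nabla\psi$ rather than merely exhibiting \emph{some} transport with the right pushforward; this rests on the almost-everywhere uniqueness in Proposition \ref{prop_brenier}, and on noting that the L-moment in this section is by convention associated to the monotone (optimal) transport. The measure-theoretic and Legendre-polynomial computations are routine.
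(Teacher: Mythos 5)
Your proof is correct and follows essentially the same route as the paper: define the convex potential $\psi(t) = m\cdot t + \sigma\varphi(t)$, observe that $\nabla\psi$ is the monotone transport onto the law of $m+\sigma X$ by the uniqueness in Proposition \ref{prop_brenier}, and conclude. The paper's proof actually stops there and leaves the final integral computation implicit, so your explicit evaluation of $\int_{[0;1]^d} L_\alpha(t)\,dt = \mathds{1}_{\alpha=(1,\dots,1)}$ via orthogonality is a welcome completion rather than a deviation.
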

\begin{proof}
Let $\psi:x\mapsto \sigma\varphi(x) + \langle x,m\rangle$. Then $\psi$ is convex and $\nabla\psi(X)=\sigma X + m$.\\
$\nabla\psi$ is then the monotone transport from the uniform distribution on $[0;1]^d$ onto the distribution of $\sigma X+m$.
\end{proof}
We do not have a general property dealing with rotation with this transport. This is strongly due to the bad behavior of the unit square through this transformation. We will present later Hermite L-moments that partially fill this deficiency.

\subsection{Monotone transport for copulas}
\label{section_copula}
Let $X$ be a bivariate vector of cdf denoted by $H$. We can build a transport of the bivariate uniform distribution on $[0;1]^2$ into $X$ through the composition of the transport of the copula of $X$ with the transport of the marginals. The reason of this construction is that the copula function is well adapted to the unit square $[0;1]^d$.\\

Let us first present the definition of a copula and Sklar's Theorem.
\begin{definition}
A copula is a function $C : [0;1]^2\rightarrow [0;1]$ with the following properties :
\begin{itemize}
\item C is 2-increasing i.e. for all $u_1\leq u_2\in[0;1]$ and $v_1\leq v_2\in [0;1]$ : 
\begin{equation*}
C(u_2,v_2) - C(u_2,v_1) - C(u_1,v_2) + C(u_1,v_1) \geq 0
\end{equation*}
\item for $u,v\in[0;1]$ :
\begin{equation*}
C(u,1) = u \text{ , } C(u,0) = 0 \text{         and        } C(1,v) = v\text{ , } C(0,v) = 0 
\end{equation*}
\end{itemize}
\end{definition} 

\begin{theorem}(Sklar's theorem)\\
\label{sklar_th}
Let $H$ be a joint distribution with margins $F$ and $G$. Then there exists a copula $C$ such that for all $x,y\in\bar{\mathbb{R}}=\mathbb{R}\cup\{-\infty,+\infty\}$ :
\begin{equation}
H(x,y) = C(F(x),G(y))
\end{equation}
C is uniquely defined on $F(\bar{\mathbb{R}})\times G(\bar{\mathbb{R}})$.\\
Conversely, if $C$ is a copula and $F$ and $G$ are distribution functions, then $H$ defined by the above equation is a joint distribution function with margins $F$ and $G$.
\end{theorem}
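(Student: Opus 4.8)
The plan is to establish the two implications separately: the converse direction is routine and I would dispatch it first, and for the direct statement I would give a probabilistic argument built on the generalized inverses already used in the paper.

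First I would treat the converse. Given a copula $C$ and univariate distribution functions $F,G$, set $H(x,y)=C(F(x),G(y))$ and check it is a bivariate distribution function with the stated margins. The $H$-volume of a rectangle $(x_1,x_2]\times(y_1,y_2]$ equals the $C$-volume of $(F(x_1),F(x_2)]\times(G(y_1),G(y_2)]$, hence is nonnegative by $2$-increasingness of $C$ (when $F(x_1)=F(x_2)$ the volume is simply $0$); right-continuity of $H$ follows because copulas are $1$-Lipschitz in each variable, hence continuous, while $F,G$ are right-continuous; the behaviour of $H$ at $\pm\infty$ comes from the boundary values of $C$; and $\lim_{y\to+\infty}H(x,y)=C(F(x),1)=F(x)$, symmetrically for the second margin. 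This half is entirely mechanical.

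For the direct implication I would argue as follows. Let $(X_1,X_2)\sim H$, with marginal distribution functions $F,G$ and marginal quantile functions $F^{-1},G^{-1}$ defined as in the text. On an enlarged probability space take $V_1,V_2$ i.i.d.\ uniform on $[0;1]$, independent of $(X_1,X_2)$, and form the distributional transforms
\begin{equation*}
U_1=F(X_1-)+V_1\bigl(F(X_1)-F(X_1-)\bigr),\qquad U_2=G(X_2-)+V_2\bigl(G(X_2)-G(X_2-)\bigr).
\end{equation*}
A short computation shows each $U_i$ is uniform on $[0;1]$ and that $F^{-1}(U_1)=X_1$, $G^{-1}(U_2)=X_2$ almost surely. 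I then set $C(u,v):=\mathbb{P}(U_1\le u,U_2\le v)$; this is a bivariate distribution function on $[0;1]^2$ with uniform margins, hence $2$-increasing with $C(u,1)=u$, $C(1,v)=v$ and $C(u,0)=C(0,v)=0$, i.e.\ a copula. Finally, the equivalence $\{F^{-1}(u)\le x\}=\{u\le F(x)\}$, which is exactly relation \ref{equivalence} established earlier for a generalized inverse, yields $\{U_1\le F(x)\}=\{X_1\le x\}$ and $\{U_2\le G(y)\}=\{X_2\le y\}$ up to null sets, so that
\begin{equation*}
C(F(x),G(y))=\mathbb{P}(U_1\le F(x),U_2\le G(y))=\mathbb{P}(X_1\le x,X_2\le y)=H(x,y).
\end{equation*}
Uniqueness on $F(\bar{\mathbb{R}})\times G(\bar{\mathbb{R}})$ is then automatic: if $u=F(x)$ and $v=G(y)$, any copula obeying the identity must take the value $H(x,y)$ at $(u,v)$, so its values are pinned down on that product set, and on its closure by Lipschitz continuity.

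The hard part will be the direct implication when $F$ or $G$ has atoms: there $C$ cannot be obtained by merely reading $H$ off through the inverses, and the role of the auxiliary uniforms $V_1,V_2$ is precisely to compensate for the failure of surjectivity of $F,G$ and to upgrade the subcopula naturally defined on $F(\bar{\mathbb{R}})\times G(\bar{\mathbb{R}})$ to a genuine copula on all of $[0;1]^2$. If one preferred to avoid enlarging the probability space, the alternative is Sklar's original route — define the subcopula on $\mathrm{Range}(F)\times\mathrm{Range}(G)$ and extend it by bilinear interpolation on the complementary rectangles, checking that $2$-increasingness is preserved — and that extension step is where the only real technical work would lie.
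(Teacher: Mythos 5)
Your proof is correct, but it takes a genuinely different route from the paper: the paper does not prove Sklar's theorem at all, it simply defers to Theorem 2.3.3 of Nelsen \cite{nelsen06}, whose argument is the ``subcopula'' route you sketch at the end (define $C$ on the closure of $F(\bar{\mathbb{R}})\times G(\bar{\mathbb{R}})$, where it is forced by $H$, then extend to $[0;1]^2$ by bilinear interpolation and verify that $2$-increasingness survives the extension). What you actually carry out instead is the probabilistic proof via the distributional transform (R\"uschendorf's argument): randomizing the jumps of $F$ and $G$ with auxiliary independent uniforms $V_1,V_2$ produces uniform $U_1,U_2$ with $F^{-1}(U_1)=X_1$ and $G^{-1}(U_2)=X_2$ almost surely, so that the joint law of $(U_1,U_2)$ is automatically a copula on all of $[0;1]^2$ and the identity $H=C(F,G)$ drops out of the set equality $\{F^{-1}(u)\le x\}=\{u\le F(x)\}$, which is precisely relation \ref{equivalence} of the paper. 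The trade-off is clear: Nelsen's route stays on the original probability space but pays for it with the combinatorial interpolation-and-verification step, whereas your route enlarges the probability space but makes both the existence of the extension and the identity $C(F(x),G(y))=H(x,y)$ essentially automatic, and it meshes well with the paper's transport point of view since it exhibits $(X_1,X_2)$ explicitly as the image of a copula-distributed pair under the marginal quantile maps. Your converse direction and the uniqueness argument on $F(\bar{\mathbb{R}})\times G(\bar{\mathbb{R}})$ are both correct as stated.
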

\begin{proof}
see Theorem 2.3.3 of Nelsen \cite{nelsen06}.
\end{proof}

Let now $C$ be a copula associated to the bivariate vector $X$. Then, using the previous Theorem \ref{sklar_th}, $C$ is the joint distribution function of a bivariate vector $W$ with uniform margins on $[0;1]$. Let $Q_C$ be the optimal transport of $U$ with uniform distribution on $[0;1]^2$ into $W= \myvecz{W_1}{W_2}$. As $W$ and $X$ share the same copula, it is sufficient to transport the margins of $X$ : if $Q_1$ and $Q_2$ transport $W_1$ into $X_1$ and $W_2$ into $X_2$ respectively (we recall that $W_1$ and $W_2$ are uniform such that we naturally choose $Q_1$ and $Q_2$ as the univariate quantiles of $X_1$ and $X_2$) then the function defined for $u,v\in[0;1]$ by
\begin{equation}
Q(u,v) =  \myvecz{Q_1}{Q_2} \circ Q_C (u,v)
\end{equation}
transports $U$ into $X$. To sum up, if we manage to transport a copula, we can easily transport all distributions sharing this copula.\\
We can link the copula function to the potential of Proposition \ref{prop_brenier} :
\begin{lemma}
Let $C$ be a copula and $Q_C = \nabla \phi_C$ the monotone transport between the uniform distribution function and the distribution whose cdf is $C$. Then for all $u,v\in[0;1]$: 
\begin{equation}
C(u,v) = vol\left( \left(\nabla \phi_C\right)^{-1} ([0;u]\times [0;v])\right).
\end{equation}
\end{lemma}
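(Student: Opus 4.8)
The plan is to unwind the definition of the pushforward measure, so that the statement reduces to the elementary fact that the law of a random vector evaluated on a corner rectangle is the value of its cumulative distribution function there. Write $\nu_C$ for the probability measure on $[0;1]^2$ whose cdf is $C$; by Sklar's Theorem (Theorem \ref{sklar_th}) this is the law of a vector $W=(W_1,W_2)$ with uniform margins. By Proposition \ref{prop_brenier}, together with the remark that for $\mu=unif$ on $[0;1]^d$ the hypothesis holds for every target measure, the monotone transport $Q_C=\nabla\phi_C$ is a well-defined measurable map satisfying $Q_C\#unif=\nu_C$, where $unif$ is the uniform law on $[0;1]^2$, i.e.\ Lebesgue measure $vol$ restricted to the unit square.

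First I would fix $u,v\in[0;1]$ and set $B=[0;u]\times[0;v]$, a Borel subset of $[0;1]^2$. Since $Q_C$ is measurable (where defined, which is $\mu$-almost everywhere), $Q_C^{-1}(B)$ is a measurable subset of $[0;1]^2$ and $vol(Q_C^{-1}(B))$ is meaningful. Applying the defining relation (\ref{transportdef}) of the pushforward with $T=Q_C$, $\mu=unif$, $\nu=\nu_C$ gives
\[
vol\bigl(Q_C^{-1}(B)\bigr)=unif\bigl(Q_C^{-1}(B)\bigr)=\bigl(Q_C\#unif\bigr)(B)=\nu_C(B).
\]

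It then remains to identify $\nu_C(B)$ with $C(u,v)$: since $\nu_C$ is the law of $W$ and $C$ is its joint cdf, $\nu_C([0;u]\times[0;v])=\mathbb{P}(W_1\le u,\,W_2\le v)=C(u,v)$. Combining the two displays yields the claimed identity $C(u,v)=vol\bigl((\nabla\phi_C)^{-1}([0;u]\times[0;v])\bigr)$. There is no real obstacle in this argument; the only points deserving a word of care are the measurability of the preimage $Q_C^{-1}(B)$ (which holds because the gradient of a convex function is Borel measurable on its domain of definition, and being defined $\mu$-a.e.\ is enough for the pushforward identity) and the remark that the closed corner rectangle $[0;u]\times[0;v]$ carries exactly the cdf value $C(u,v)$ under $\nu_C$ — immediate from the definition of a bivariate distribution function.
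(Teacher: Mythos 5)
Your proof is correct and is essentially the paper's own argument: the paper likewise writes $C(u,v)=\mathbb{P}[W_1\le u, W_2\le v]=\mathbb{P}[\nabla\phi_C(U)\in[0;u]\times[0;v]]=vol\left(\left(\nabla\phi_C\right)^{-1}([0;u]\times[0;v])\right)$, which is your pushforward computation phrased probabilistically. Your added remarks on measurability and on the a.e.\ definition of the gradient are fine but not needed beyond what the paper assumes.
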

\begin{proof}
Let $W = \myvecz{W_1}{W_2}$ be the distribution of cdf $C$. By definition, we have $W \eqlaw \nabla\phi_C(U)$ ($U$ is uniform on $[0;1]^2$).\\
Then, if $u,v\in[0;1]$
\begin{equation*}
C(u,v) = \mathbb{P}[W_1\leq u, W_2\leq v] = \mathbb{P}[\partial_1 \phi_C(U) \leq u, \partial_2 \phi_C(U)\leq v] = vol\left(\left(\nabla \phi_C\right)^{-1} ([0;u]\times [0;v])\right).
\end{equation*}
\end{proof}

\begin{example}(Independent Copula)\\
The case of the independent copula $\Pi(u,v) = uv$ is straightforward. In that case, the potential is $\phi_\Pi(u,v) = \frac{u^2}{2}+\frac{v^2}{2}$ which gives :
\begin{equation}
Q_\Pi(u,v ) =\nabla\phi_\Pi(u,v) =  \myvecz{u}{v}.
\end{equation}
As (U,V) are uniform independent, $Q_\Pi(u,v)$ have independent margins and its copula is $\Pi$. $\phi_\Pi$ is then the associated potential for the independent copula.\\
The L-moments of the copula's distribution are then for $j,k>0$ :
\begin{equation*}
\lambda_{jk} = \int_{[0;1]^2} \myvecz{u}{v} L_j(u)L_k(v) dudv = \myvecz{\mathds{1}_{k=1} \lambda_j(U)}{\mathds{1}_{j=1} \lambda_k(U)}
\end{equation*}
where $U$ represents a uniform distribution on [0;1] i.e.
\begin{equation*}
\lambda_{11} = \myvecz{\frac{1}{2}} {\frac{1}{2}} \text{  ,  }\lambda_{12} = \myvecz{0} {\frac{1}{6}} \text{  ,  }\lambda_{21} = \myvecz{\frac{1}{6}} {0}\text{  ,  }
\lambda_{jk} = 0 \text{  otherwise}.
\end{equation*}

\end{example}

\begin{example}(Max-Copula, continued)\\
The copula-max $M(u,v) = \min(u,v)$ was treated in Example \ref{maxcopula}. The associated transport is :
\begin{equation}
Q_M(u,v) = \frac{1}{2} \myvecz{u+v}{u+v}.
\end{equation}
The L-moments of the copula's distribution are then for $j,k>0$ :
\begin{equation*}
\lambda_{jk} = \frac{1}{2} \int_{[0;1]^2} \myvecz{u+v}{u+v} L_j(u)L_k(v) dudv = \frac{1}{2} \myvecz{\mathds{1}_{k=1} \lambda_j(U) + \mathds{1}_{j=1} \lambda_k(U)}{\mathds{1}_{k=1} \lambda_j(U) + \mathds{1}_{j=1} \lambda_k(U)}
\end{equation*}
where $U$ represents a uniform distribution on $[0;1]$ i.e.
\begin{equation*}
\lambda_{11} = \myvecz{\frac{1}{2}} {\frac{1}{2}} \text{  ,  }\lambda_{12} = \myvecz{\frac{1}{12}} {\frac{1}{12}} \text{  ,  }\lambda_{21} = \myvecz{\frac{1}{12}} {\frac{1}{12}}\text{  ,  }
\lambda_{jk} = 0 \text{  otherwise}.
\end{equation*}
\end{example}

\begin{example}(Min-Copula)\\
The case of the copula-min $W(u,v) = \max(u+v-1,0)$ can similarly be solved.\\
Let us define the potential $\phi_W(u,v) = \frac{1}{4} (u+1-v)^2$, then for $u,v\in [0;1]$ :
\begin{equation}
Q_W(u,v) = \nabla\phi_W(u,v) = \frac{1}{2} \myvecz{u+1-v}{v+1-u}
\end{equation}
If $U$ and $V$ are uniform and independent $Q_W(U,V)$ has uniform margins that are anti-comonotone i.e. the copula of $Q_W(U,V)$ is $W$.
The L-moments of the copula are then for $j,k>0$ :
\begin{equation*}
\lambda_{jk} = \frac{1}{2} \int_{[0;1]^2} \myvecz{u+1-v}{v+1-u} L_j(u)L_k(v) dudv = \frac{1}{2} \myvecz{\mathds{1}_{k=1} \lambda_j(U) + \mathds{1}_{j=1} (-1)^k\lambda_k(U)}{-\mathds{1}_{k=1} (-1)^j\lambda_j(U) - \mathds{1}_{j=1} \lambda_k(U)}
\end{equation*}
where $U$ represents a uniform distribution on [0;1] i.e.
\begin{equation*}
\lambda_{11} = \myvecz{\frac{1}{2}} {\frac{1}{2}} \text{  ,  }\lambda_{12} = \myvecz{\frac{1}{12}} {-\frac{1}{12}} \text{  ,  }\lambda_{21} = \myvecz{-\frac{1}{12}} {\frac{1}{12}}\text{  ,  }
\lambda_{jk} = 0 \text{  otherwise}.
\end{equation*}

\end{example}

For the sake of simplicity, we have presented copulas in the bivariate setting but multivariate generalizations of copulas and of Sklar's theorem exist. It is then straightforward to adapt the above transport for multivariate random vectors.

\begin{remark}
Even if it is difficult to find the explicit formulation of the monotone transport for classical parametric family of copulas (such as Gumbel or Clayton copula), we can define a copula from its potential.
\end{remark}

\subsection{Monotone transport from the standard Gaussian distribution}
\label{section_gauss}
The major drawback of the uniform law on $[0;1]^d$ is its non-invariance by rotation which is a desirable property in order to more easily compute the monotone transports. For example, the multivariate standard Gaussian distribution appears as a better source measure but any other distribution could also be considered.\\
We propose an alternative transport leading to the following L-moments :
\begin{equation}
\lambda_{\alpha} = \int_{[0;1]^d} T_0 \circ Q_{\mathcal{N}}(t_1,...,t_d)L_\alpha (t_1,...,t_d) dt_1...dt_d
\end{equation}
where $Q_{\mathcal{N}}$ is the transport of the multivariate standard distribution $\mathcal{N}(0,I_d)$ into the uniform one defined by 
\begin{equation*}
Q_{\mathcal{N}}(t_1,..,t_d) = \myvec{\mathcal{N}^{-1}(t_1)}{\mathcal{N}^{-1}(t_d)}
\end{equation*}
and $T_0$ the transport of the considered distribution into the multivariate standard distribution :
\begin{equation}
([0;1],du) \overset{Q_{\mathcal{N}}}{ \rightarrow} (\mathbb{R}^d,d\mathcal{N}) \overset{T_0}{ \rightarrow} (\mathbb{R}^d,d\nu)
\end{equation}

In \cite{sei11}, Sei used the transport from the standard Gaussian in order to define distributions through a convex potential $\varphi$ (actually, he proposed to take the dual potential in the sense of Legendre duality). A useful property of this transport is given by the following lemma
\begin{lemma}
Let $A\in O_d(\mathbb{R})$ be the space of orthogonal matrices (i.e. $AA^T = A^TA=I_d$), $m\in\mathbb{R}^d$, $a\in\mathbb{R}^*$. Let us denote by $\phi$ the potential linked to the random variable such that $\nabla \phi(N)\eqlaw X$ with $N\in\mathbb{R}^d$ a standard Gaussian random vector.\\
 Then the respective potentials related to the vectors $AX$, $aX$ and $X+m$ are $\phi(Ax)$, $a\phi(x)$ and $\phi(x)+m.x$.
\end{lemma}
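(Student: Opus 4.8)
The plan is to verify each of the three claims by exhibiting the candidate potential, checking it is convex, computing its gradient, and confirming that the gradient pushes the standard Gaussian $\mathcal{N}(0,I_d)$ onto the target law; by the uniqueness part of Proposition \ref{prop_brenier} (which applies since $\mathcal{N}_d$ gives no mass to sets of Hausdorff dimension at most $d-1$, being absolutely continuous), this identifies the monotone transport uniquely. Throughout I write $T=\nabla\phi$ for the monotone transport with $\nabla\phi(N)\eqlaw X$.

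\textbf{Translation.} Set $\psi(x)=\phi(x)+m\cdot x$. Then $\psi$ is convex as a sum of a convex function and a linear one, and $\nabla\psi(x)=\nabla\phi(x)+m$, so $\nabla\psi(N)=\nabla\phi(N)+m\eqlaw X+m$. Hence $\nabla\psi$ is the monotone transport of $\mathcal{N}_d$ onto the law of $X+m$, and its potential is $\phi(x)+m\cdot x$ as claimed.

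\textbf{Dilatation.} For $a\in\mathbb{R}^*$ consider $\psi(x)=a\phi(x)$; this is convex exactly when $a>0$. For $a>0$, $\nabla\psi(x)=a\nabla\phi(x)$ gives $\nabla\psi(N)\eqlaw aX$, as wanted. For $a<0$ one must be slightly more careful, since $a\phi$ is concave; the point is that the law of $aX$ is obtained by the monotone transport whose potential is $a\phi$ composed with the sign reflection on the source side — more precisely, if $a<0$ write $a=-|a|$ and use that $-N\eqlaw N$, so $x\mapsto |a|\phi(-x)$ is the convex potential and its gradient $x\mapsto -|a|\nabla\phi(-x)$ pushes $\mathcal{N}_d$ onto $aX$. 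The statement as phrased ($a\phi(x)$) is the one relevant when $a>0$, which is the case of interest for a scaling parameter.

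\textbf{Rotation.} Set $\psi(x)=\phi(Ax)$ with $A\in O_d(\mathbb{R})$. Convexity of $\psi$ follows from convexity of $\phi$ and linearity of $x\mapsto Ax$. The chain rule gives $\nabla\psi(x)=A^T\nabla\phi(Ax)$. Since $A$ is orthogonal and $N$ is a standard Gaussian, $AN\eqlaw N$, so $\nabla\psi(N)=A^T\nabla\phi(AN)\eqlaw A^T\nabla\phi(N)\eqlaw A^T X$. The statement asserts the potential of $AX$ is $\phi(Ax)$; this is consistent once one reads it with the matching convention (the potential of $A^{-1}X=A^TX$ is $\phi(Ax)$, equivalently the potential of $AX$ is $\phi(A^Tx)=\phi(A^{-1}x)$). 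The main obstacle, and the only genuinely substantive point, is precisely this bookkeeping: one must use the rotational invariance $AN\eqlaw N$ of the \emph{source} Gaussian — which is exactly why the Gaussian is a more convenient reference measure than $unif$ on $[0;1]^d$, as emphasized in Section \ref{section_gauss} — and then invoke uniqueness in Proposition \ref{prop_brenier} to conclude that the convex function we wrote down is \emph{the} potential, not merely \emph{a} transport. The convexity checks and gradient computations are routine; I would state them in one line each and spend the written proof making the distributional identities $\nabla\psi(N)\eqlaw(\text{target})$ explicit.
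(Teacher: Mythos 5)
Your proof is correct and follows essentially the same route as the paper's: exhibit the candidate convex potential, compute its gradient, use the rotational invariance $AN\eqlaw N$ of the source Gaussian, and identify the result as \emph{the} monotone transport via uniqueness. You additionally catch two points the paper's own proof glosses over — for $a<0$ the function $a\phi$ is concave rather than convex (your reflection fix $x\mapsto|a|\phi(-x)$ is the right repair), and the chain rule gives $\nabla\bigl[\phi(A\,\cdot)\bigr](x)=A^{T}\nabla\phi(Ax)$, so $\phi(Ax)$ is strictly speaking the potential of $A^{T}X$ rather than of $AX$ — both of which are worth recording.
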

\begin{proof}
Let $\psi_A(x)=\phi(Ax)$, then $\psi$ is convex and $\nabla \psi_A(x) = A\phi(Ax)$. Furthermore, as $A$ is an orthogonal matrix, $AN\eqlaw N$ which implies $\nabla\psi_A(N)\eqlaw AX$.\\
In the same way, if $\psi_a(x)=a\phi(x)$ and $\psi_m(x)=\phi(x)+m$, we have
\begin{eqnarray*}
\nabla\psi_a(N) &=& a\nabla\phi(N)\eqlaw aX\\
\nabla\psi_m(N) &=& \nabla\phi(N)+m\eqlaw X+m.
\end{eqnarray*}
\end{proof}
Unfortunately, the generalization to all affine transformations is not easy. This is why it is often more convenient to define distributions through their potential function as in Sei's article.

\begin{example}(L-moments of multivariate Gaussian)\\
Let us consider $m\in\mathbb{R}^d$, a positive matrix $A$ and the quadratic potential : 
\begin{equation}
\varphi(x) = m.x + \frac{1}{2} x^TAx \ \ \ \ \ \ \text{  for   } x\in\mathbb{R}^d.
\end{equation}
The transport associated to this potential is :
\begin{equation}
T_0(x) = \nabla\varphi(x) = m+Ax \ \ \ \ \ \ \text{  for   } x\in\mathbb{R}^d.
\end{equation}
Furthermore,  $T_0(\mathcal{N}_d(0,I_d)) \eqlaw \mathcal{N}_d(m,A^TA)$. The L-moments of a multivariate Gaussian of mean $m$ and covariance $A^TA$ are :
\begin{align*}
\lambda_\alpha &=  \int_{[0;1]^d} \left[m + A\mathcal{N}_d(t_1,...,t_d) \right]L_\alpha (t_1,...,t_d) dt_1...dt_d\\
&= \mathds{1}_{\alpha=(1,...,1)}m + \mathds{1}_{\alpha\neq(1,...,1)}A\lambda_\alpha (\mathcal{N}_d(0,I_d))
\end{align*}
with the notation $\lambda_\alpha (\mathcal{N}_d(0,I_d))$ denoting the $\alpha$-th L-moments of the standard multivariate Gaussian, which is easy to compute since it is a random vector with independent components (see example \ref{ex_independent}).\\
In particular, the L-moment matrix of degree 2 : 
\begin{equation}
\Lambda_2 = \left(\lambda_{2,1...,1}\dots \lambda_{1,...,1,2}\right) = A \left(\begin{array}{ccc}\frac{1}{\sqrt\pi}&0&\dots\\0&\ddots&0\\ \dots&0 & \frac{1}{\sqrt\pi}\end{array} \right).
\end{equation}
The matrix of L-moments ratio of degree 2 is then 
\begin{equation}
\tau_2 = \left(\tau_{2,1,...,1}\dots \tau_{1,...,1,2}\right) = 
\left(\begin{array}{ccc} \frac{a_{11}}{\left(\sum_{i=1}^d a_{i1}^2\right)^{1/2}} & \dots& \frac{a_{1d}}{\left(\sum_{i=1}^d a_{id}^2\right)^{1/2}}\\
\vdots &\ddots &\vdots\\
\frac{a_{d1}}{\left(\sum_{i=1}^d a_{i1}^2\right)^{1/2}} & \dots& \frac{a_{dd}}{\left(\sum_{i=1}^d a_{id}^2\right)^{1/2}}
\end{array}\right) 
\end{equation}
with
\begin{equation*}
A= 
\left(\begin{array}{ccc} a_{11}& \dots& a_{1d}\\
\vdots &\ddots &\vdots\\
a_{d1}& \dots& a_{dd}
\end{array}\right) 
\end{equation*}

\end{example}

\begin{example}(Spherical and nearly-elliptical distributions)\\
We now present a generalization of the previous example close to the elliptical family. Let $u :\mathbb{R}\rightarrow \mathbb{R}$ be a derivable strictly convex function,$m\in\mathbb{R}$, $A$ be a positive matrix and define the potential : 
\begin{equation}
\varphi(x) = m.x + \frac{1}{2} u(x^TAx) \ \ \ \ \ \ \text{  for   } x\in\mathbb{R}^d.
\end{equation}
The associated transport is given by : 
\begin{equation}
T_0(x) = m+ u'(x^TAx)Ax.
\end{equation}
If the integral is well defined, the L-moments of this distribution are then 
\begin{equation}
\lambda_\alpha = \mathds{1}_{\alpha=(1,...,1)}m + A\int_{\mathbb{R}^d}  u'(x^TAx)xL_\alpha (\mathcal{N}(x)) d\mathcal{N}(x).
\end{equation}

If we take $A=I_d$ and write $u'(x) = \frac{v(x)}{x^{1/2}} $, then $T_0(X) = m + v(X^TX)\frac{X}{(X^TX)^{1/2}}$ where $X$ is a standard Gaussian random variable which is the characterization of a spherical distribution according to \cite{cambanis81}.

\begin{figure}
  \centering
   \subfloat{  \includegraphics[width=3in]{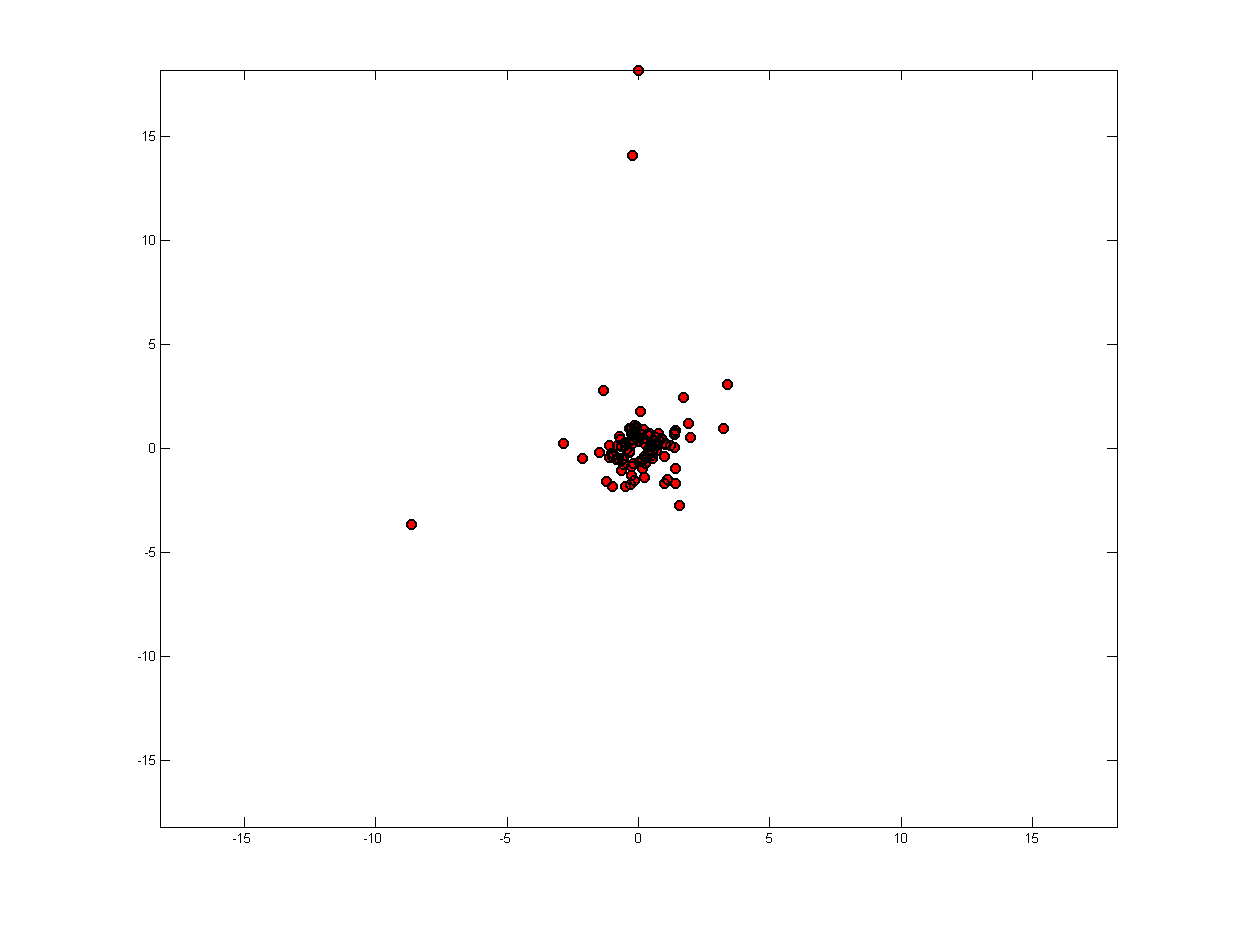}}
     \subfloat{ \includegraphics[width=3in]{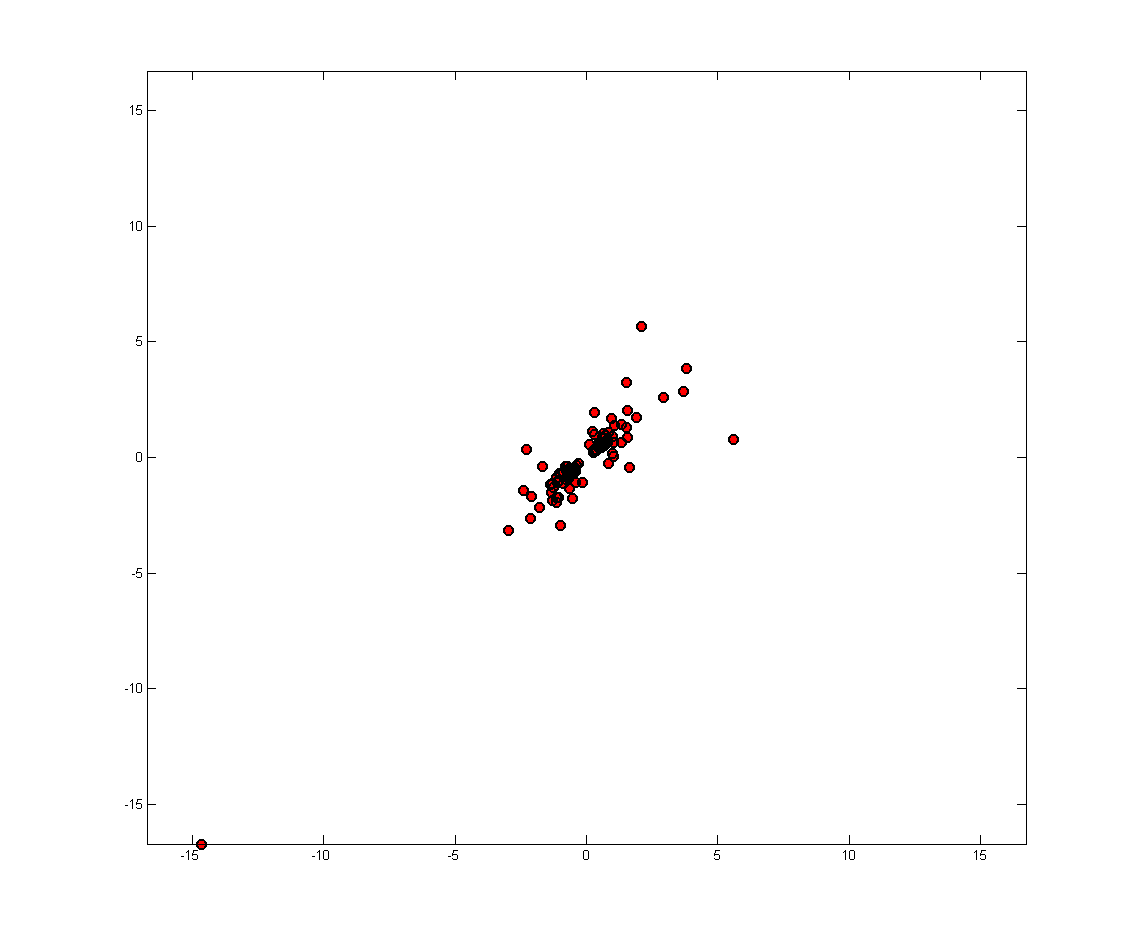}}
   \caption[Samples from the distribution induced by $T_0(x) = u'(x^TAx)Ax$ with $u(x) =-\log(x)$]{Samples from the distribution induced by $T_0(X) = u'(x^TAx)Ax$ with $u(x) =-\log(x)$ and $A=I_d$ (left) or $A=\left(\begin{array}{cc}1 & 0.8\\0.8&1\end{array}\right)$ (right)}
\end{figure}

\begin{figure}
  \centering
   \subfloat{  \includegraphics[width=3in]{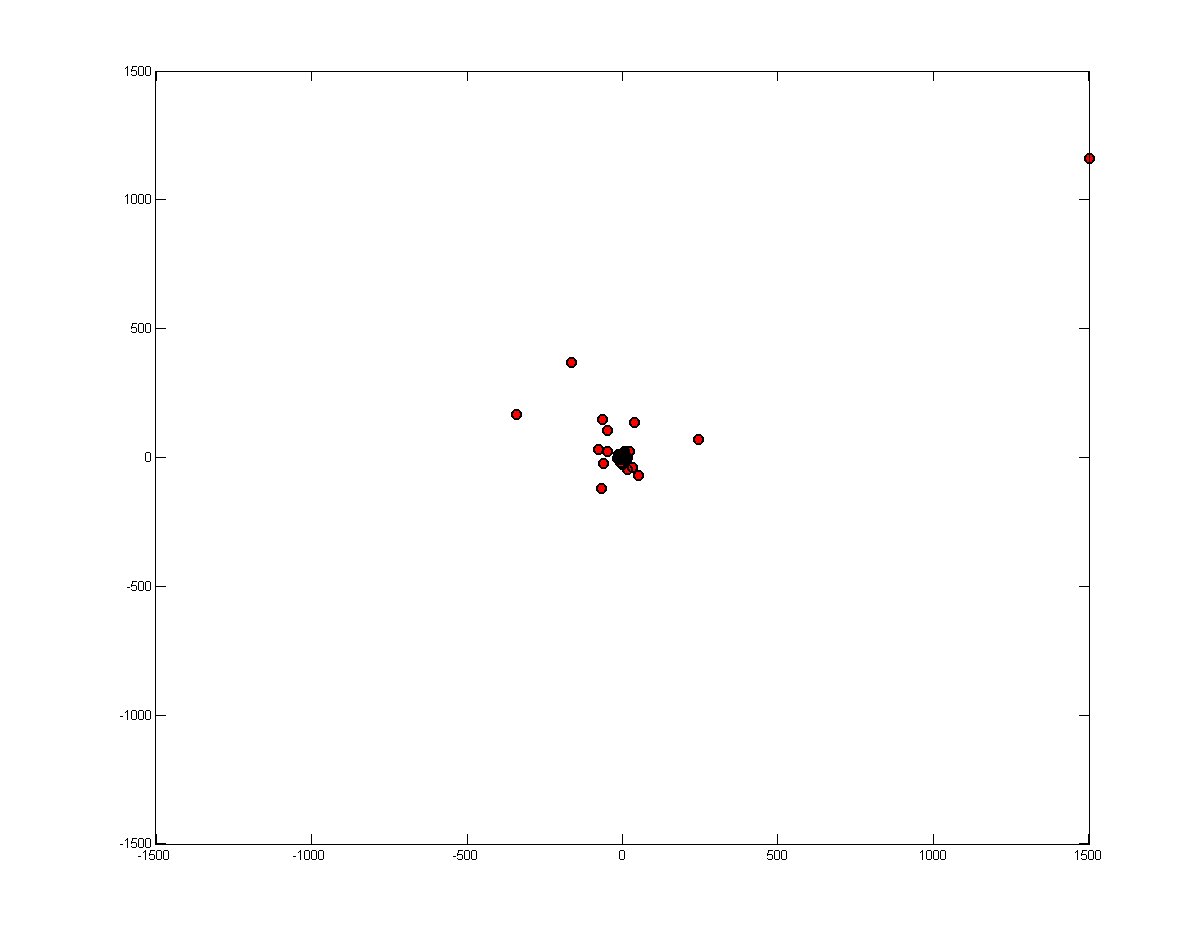}}
     \subfloat{ \includegraphics[width=3in]{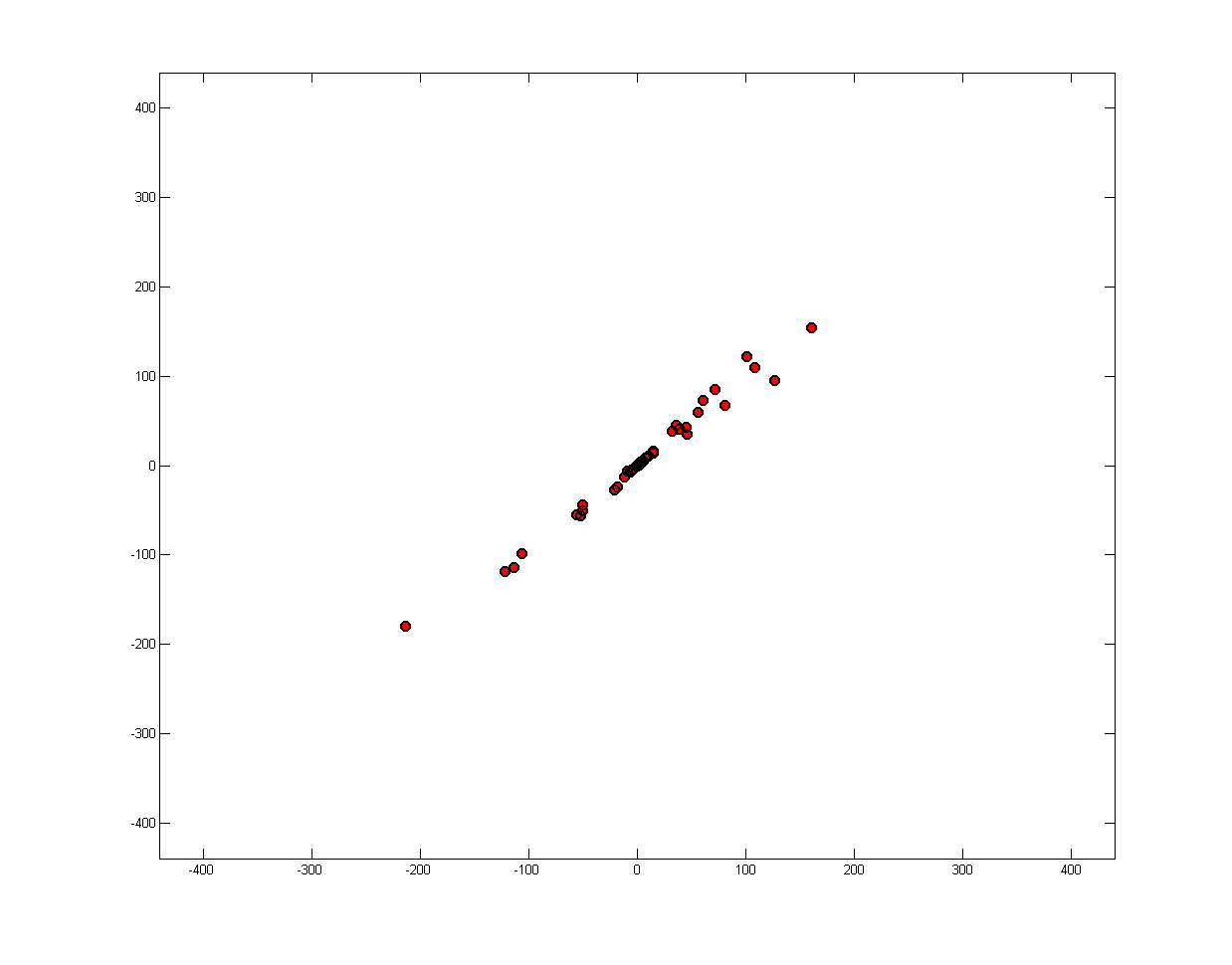}}
   \caption[Samples from the distribution induced by $T_0(x) = u'(x^TAx)Ax$ with $u(x) =\frac{1}{3}x^3$]{Samples from the distribution induced by $T_0(X) = u'(x^TAx)Ax$ with $u(x) =\frac{1}{3}x^3$ and $A=I_d$ (left) or $A=\left(\begin{array}{cc}1 & 0.8\\0.8&1\end{array}\right)$ (right)}
\end{figure}

\end{example}

\begin{example}(Linear combinations of independent variables)\\
\label{lciv1}
Let $(e_1,...,e_d)$ be an orthonormal basis of $\mathbb{R}^d$ and $(b_1,...,b_d)$ the canonical basis. We consider the potential defined by :
\begin{equation}
\varphi(x) = \sum_{i=1}^d \sigma_i \varphi_i(x^Te_i)
\end{equation}
with each function $\varphi_i$ derivable and convex and $\sigma_i>0$. Then
\begin{equation}
\nabla\varphi(x) = \sum_{i=1}^d \sigma_i \varphi_i'(x^Te_i)e_i 
\end{equation}
Then, if we denote by $P=\sum_{i=1}^d  e_ib_i^T$ and $D=\sum_{i=1}^d \sigma_ib_ib_i^T$, this potential generates the random vector 
\begin{equation}
Y\eqlaw P^TD \myvec{\varphi'_1(X^Te_1)}{\varphi'_d(X^Te_d)}.
\end{equation}
Let us note that $P$ is orthogonal i.e. $PP^T=P^TP=I_d$ and $D$ is diagonal.\\
As $e_1$,...,$e_d$ is an orthonormal family, $X^Te_1,...,X^Te_d$ are independent Gaussian random variables. Then if we write the increasing functions $\varphi'_i(x) = Q_i(\mathcal{N}_1(x))$ with $Q_i$ the quantile of a random variable $Z_i$, then
\begin{equation}
Y\eqlaw P^T \myvec{\sigma_1 Z_1}{\sigma_dZ_d}
\label{eq:lciv}
\end{equation}
with $Z_1$,...,$Z_d$ independent. The parameters $\sigma_i$ are meant to represent a scale parameter for each $Z_i$ but can be absorbed in the function $\varphi_i'$.\\
Figure \ref{fig:lciv} illustrates this model with for each $i$, $Z_i=\epsilon Z_i'$ where $\epsilon$ is a Rademacher random variable (i.e. discrete with probability $\frac{1}{2}$ on $-1$ and $1$) and $Z_i'$ is a Weibull random variable.\\

\begin{figure}
  \centering
   \subfloat [Symmetrized Weibull density for $Z_1$ and $Z_2$ (shape parameter 0.5 and scale parameter 1)]{  \includegraphics[scale=0.3]{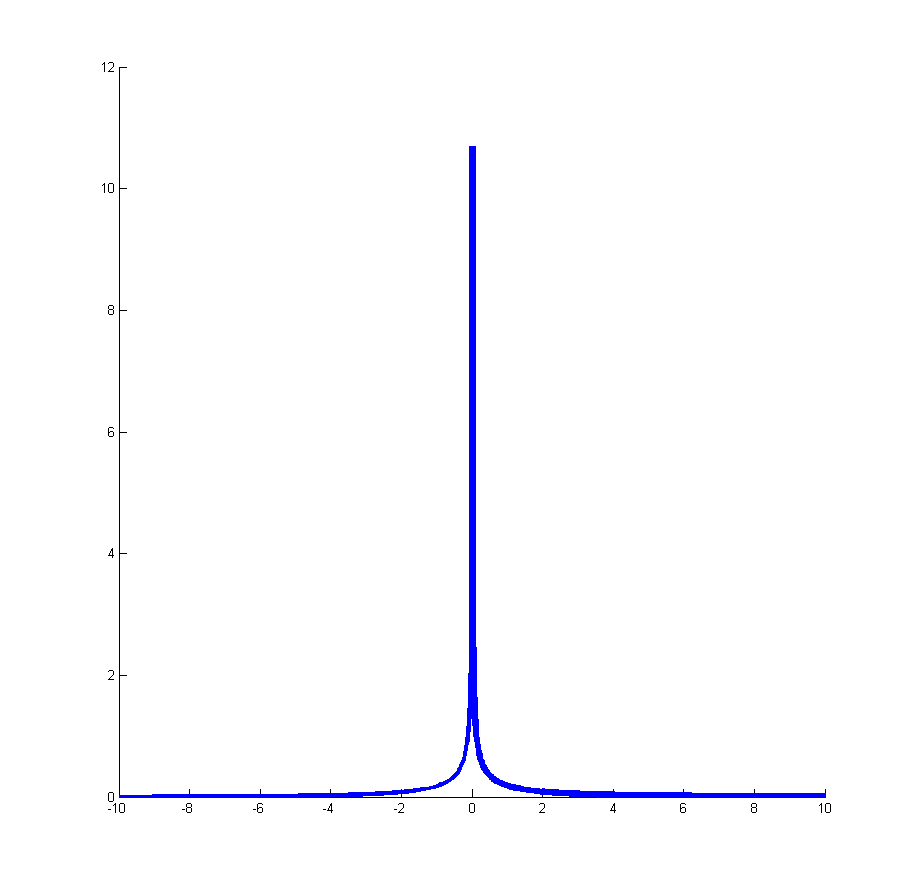}}
     \subfloat[Corresponding samples]{ \includegraphics[scale=0.3]{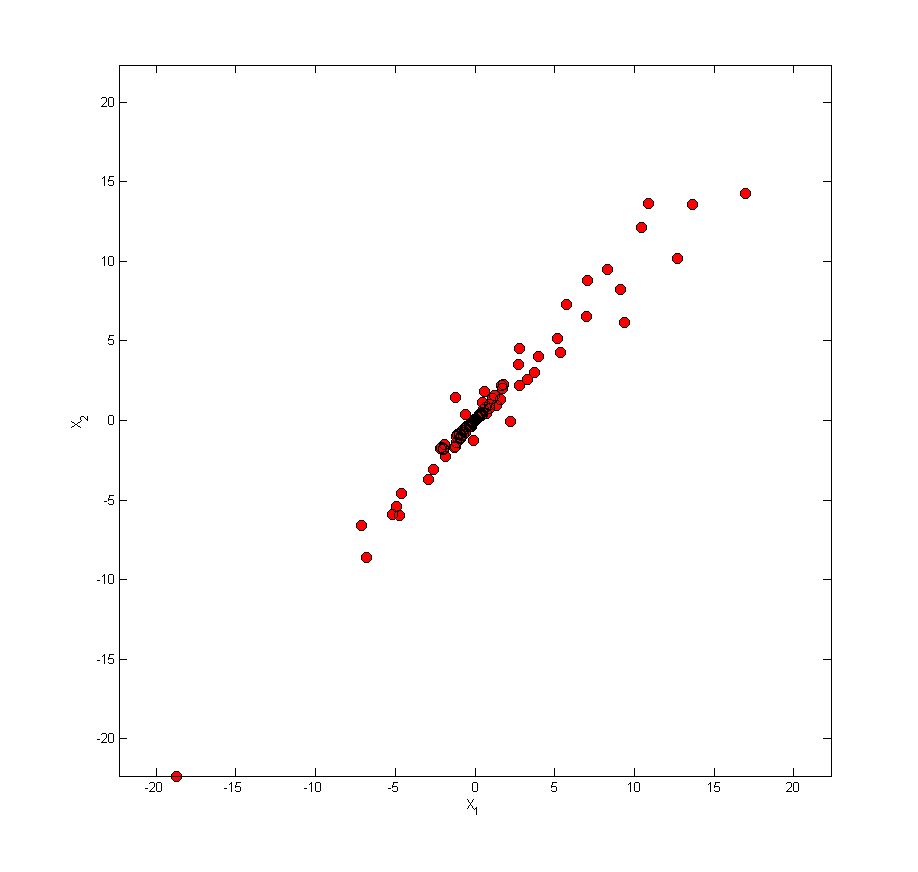}}\\
   \subfloat [Symmetrized Weibull density for $Z_1$ and $Z_2$ (shape parameter 2 and scale parameter 1)]{  \includegraphics[scale=0.3]{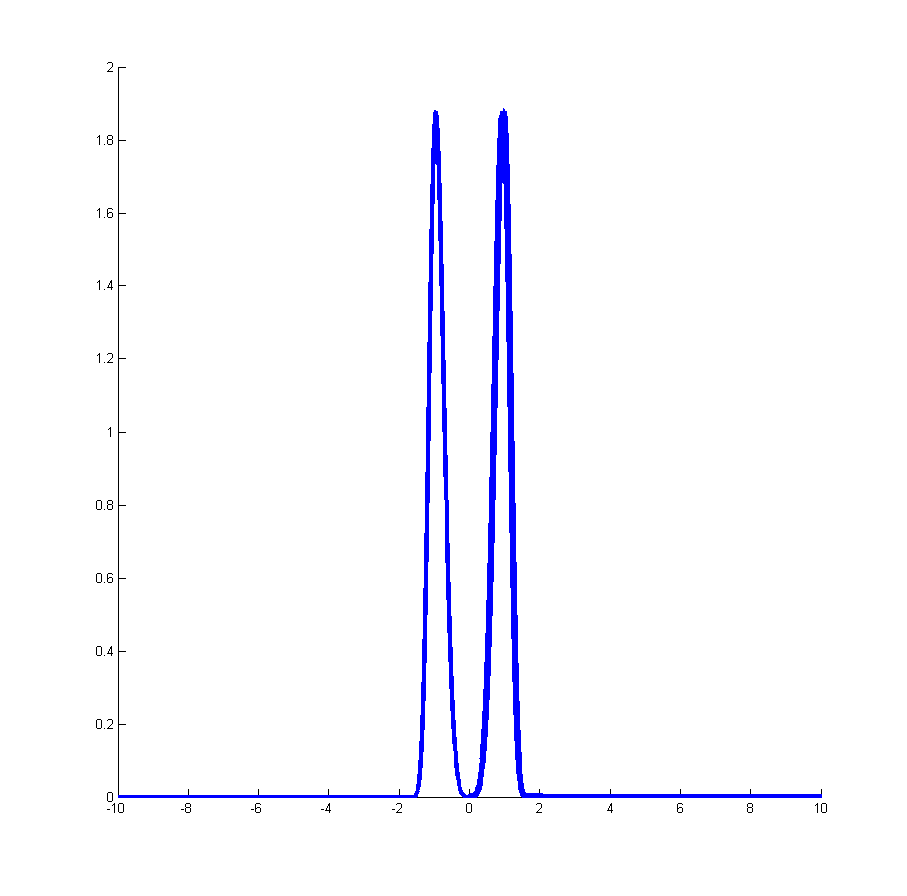}}
     \subfloat[Corresponding samples]{ \includegraphics[scale=0.3]{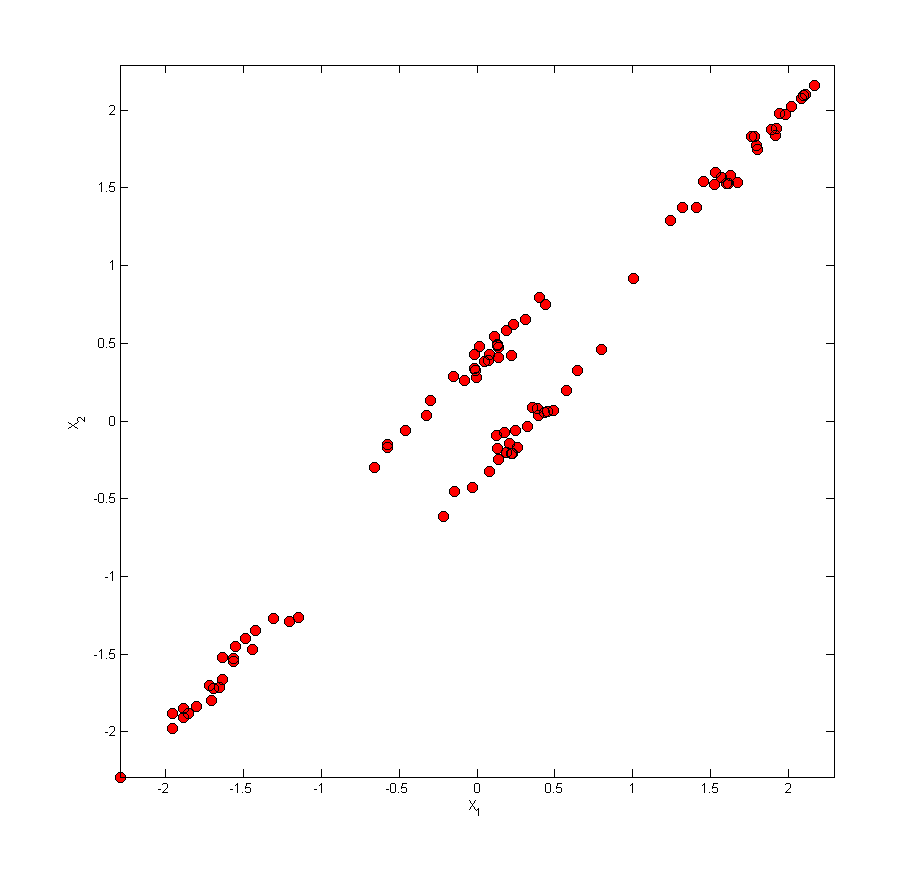}}\\
   \caption[Samples from the distribution given by Equation (\ref{eq:lciv})]{\label{fig:lciv}Samples from the distribution given by equation (\ref{eq:lciv}) with $P=\frac{1}{\sqrt{2}}\left(\begin{array}{cc}1 & 1\\1&-1\end{array}\right)$, $\sigma_1=1.8$ and $\sigma_2=0.2$ (right) for different parameters for the Weibull distribution}

\end{figure}

The L-moments of $Y$ are then for $\alpha\in\mathbb{N}^d_*$ :
\begin{equation*}
\lambda_{\alpha} = P^TD \myvec{\int_{\mathbb{R}^d}\varphi_1'(\langle x,e_1\rangle)L_{\alpha}(\mathcal{N}_d(x))d\mathcal{N}_d(x)}{\int_{\mathbb{R}^d}\varphi_d'(\langle x,e_d\rangle)L_{\alpha}(\mathcal{N}_d(x))d\mathcal{N}_d(x))}
\end{equation*}

\end{example}

\section{Rosenblatt transport and L-moments}

\subsection{General multivariate case}

In a paper dated of 1952 \cite{rosenblatt52}, Rosenblatt defined a transformation for the random variable $X=(X_1,...,X_d)$ with an absolutely continuous distribution. This transformation denoted by $T$ is now known as Rosenblatt transport (sometimes named Knothe's transport) and is explicitly given by the successive conditional distributions of $X_k | X_1=x_1,...,X_{k-1}=x_{k-1}$ :
\begin{equation}
\label{eq:rosenblatt}
T(x_1,...,x_d) = \left(\begin{array}{c}
F_{X_1}(x_1)\\ F_{X_2|X_1}(x_2|x_1),\\ \vdots\\ F_{X_d|X_1,...,X_{d-1}}(x_d|x_1,...,x_{d-1})
\end{array}\right)
\end{equation}
Rosenblatt showed that $T$ transports the random variable $X$ into the uniform law on $[0;1]^d$. However, $T$ is not uniquely defined because there are $d!$ transports $T$ corresponding to the $d!$ ways in which one can number the coordinates $X_1,...,X_d$.\\
In the following, we soften the absolute continuity assumption in order to transport the uniform measure on $[0;1]^d$ $\mu$ onto an arbitrary measure $\nu$. In that version, the Rosenblatt transport is based on the disintegration theorem (given without proof) which is a consequence of the Radon-Nikodym theorem (see for example \cite{ambrosio05}) : 
\begin{theorem}
Let $E_1$ and $E_2$ be two separable metric spaces equipped with their Borel $\sigma$-algebras, $B_{E_1}$ and $B_{E_2}$. Let $\gamma$ be a Borel probability measure on $E_1\times E_2$ and $\gamma_1 = \pi_{E_1}\gamma$ be its first marginal; then there exists a family of probability measures on $E_2$, $(\gamma_2^{x_1})_{x_1\in E_1}$ measurable in the sense that $x_1\mapsto \gamma_2^{x_1}(A_2)$ is $\mu$-measurable for every $A_2\in B_{E_2}$ and such that $\gamma =\gamma_1\otimes\gamma_2^{x_1}$ i.e. : 
\begin{equation}
\gamma(A_1\times A_2) = \int_{A_1}\gamma_2^{x_1}(A_2)d\gamma_1(x_1)
\end{equation}
for every $A_1\in B_{E_1}$ and $A_2\in B_{E_2}$.
\end{theorem}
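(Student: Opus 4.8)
The plan is to realize each conditional measure $\gamma_2^{x_1}$ as a Radon--Nikodym derivative in the $x_1$ variable, assembled consistently, which is exactly the sense in which the statement is "a consequence of the Radon--Nikodym theorem". Since $E_1$ and $E_2$ are separable metric, the product Borel $\sigma$-algebra coincides with $B_{E_1\times E_2}$, so it suffices to produce the family on measurable rectangles. First I would fix a countable algebra $\mathcal{A}\subset B_{E_2}$ generating $B_{E_2}$ (the algebra generated by a countable basis of the topology). For each $A\in\mathcal{A}$ the set function $B\mapsto\gamma(B\times A)$ is a finite measure on $E_1$ dominated by $\gamma_1$, because $\gamma(B\times A)\le\gamma(B\times E_2)=\gamma_1(B)$; hence Radon--Nikodym yields $f_A\in L^1(\gamma_1)$ with $0\le f_A\le 1$ $\gamma_1$-a.e. and $\gamma(B\times A)=\int_B f_A\,d\gamma_1$ for all $B\in B_{E_1}$.

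Next I would fix one version of each $f_A$ and use countability to find a $\gamma_1$-null set $N$ outside of which, simultaneously, $f_{E_2}=1$, $f_\emptyset=0$, $f_{A\cup A'}=f_A+f_{A'}$ for all disjoint $A,A'\in\mathcal{A}$, and $f_A\le f_{A'}$ whenever $A\subseteq A'$; each is an a.e. identity and only countably many are involved. Thus for $x_1\notin N$ the map $A\mapsto f_A(x_1)$ is a finitely additive $[0,1]$-valued set function on $\mathcal{A}$ of total mass $1$. The heart of the argument is to upgrade this to countable additivity, i.e. $f_{A_n}(x_1)\to 0$ whenever $A_n\in\mathcal{A}$ decrease to $\emptyset$; granting this, Carath\'{e}odory's extension theorem promotes $A\mapsto f_A(x_1)$ to a Borel probability measure $\gamma_2^{x_1}$ on $E_2$ for each $x_1\notin N$ (and one sets $\gamma_2^{x_1}$ equal to a fixed probability measure for $x_1\in N$).

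This continuity-at-$\emptyset$ step is the main obstacle, and it is precisely where a compactness input enters: in a completely general separable metric space a decreasing sequence $A_n\downarrow\emptyset$ in $\mathcal{A}$ can fail to satisfy $f_{A_n}(x_1)\to 0$. It is harmless in the paper's applications, where $E_1=E_2=[0;1]$ (or $\mathbb{R}$) are Polish, so $A\mapsto\gamma(E_1\times A)$ is a tight Borel measure on $E_2$, and one can push a tightness estimate through the Radon--Nikodym derivatives on a further $\gamma_1$-full set to obtain the required continuity. In fact, when $E_2=\mathbb{R}$ one can bypass the abstract extension entirely: pick versions $G(x_1,q)$ of the density of $B\mapsto\gamma(B\times(-\infty,q])$ with respect to $\gamma_1$ for rational $q$, arrange on a $\gamma_1$-full set that $q\mapsto G(x_1,q)$ is nondecreasing with limits $0$ at $-\infty$ and $1$ at $+\infty$, right-continuize it, and let $\gamma_2^{x_1}$ be the associated Lebesgue--Stieltjes probability measure.

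Finally I would verify the two remaining claims. Measurability of $x_1\mapsto\gamma_2^{x_1}(A_2)$ holds by construction for $A_2\in\mathcal{A}$, and the class of $A_2$ for which it holds is a Dynkin system (monotone convergence for increasing unions, complementation for differences), hence equals $B_{E_2}$. For the disintegration identity, fix $A_1\in B_{E_1}$: both $A_2\mapsto\gamma(A_1\times A_2)$ and $A_2\mapsto\int_{A_1}\gamma_2^{x_1}(A_2)\,d\gamma_1(x_1)$ are finite measures on $E_2$ (the second by monotone convergence) agreeing on $\mathcal{A}$ by the Radon--Nikodym identity, hence agreeing on $B_{E_2}$; since $A_1$ was arbitrary, this gives $\gamma(A_1\times A_2)=\int_{A_1}\gamma_2^{x_1}(A_2)\,d\gamma_1(x_1)$ for all $A_1\in B_{E_1}$ and $A_2\in B_{E_2}$, which is the assertion.
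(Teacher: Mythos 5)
The paper gives no proof of this theorem: it is stated explicitly as ``given without proof'' with a pointer to Ambrosio--Gigli--Savar\'{e}, so there is nothing to compare your argument against except that reference. On its own merits, your outline is the standard construction of a regular conditional probability and is essentially sound: Radon--Nikodym derivatives $f_A$ for $A$ in a countable generating algebra, a single $\gamma_1$-null set outside of which $A\mapsto f_A(x_1)$ is a finitely additive probability, Carath\'{e}odory extension, and a Dynkin-system argument for measurability and for the identity $\gamma(A_1\times A_2)=\int_{A_1}\gamma_2^{x_1}(A_2)\,d\gamma_1(x_1)$. You are also right to flag that the theorem as literally stated, for arbitrary separable metric $E_2$, is too strong: the passage from finite to countable additivity genuinely needs an inner-regularity hypothesis on $E_2$ (Radon, e.g.\ Polish), which is what the cited reference assumes and what holds in every application in the paper ($E_2=\mathbb{R}$ or $[0;1]$). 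The one step you leave as prose rather than proof is precisely that upgrade; the clean way to finish it is Marczewski's compact-class criterion: choose compacts $K_m$ with second marginal mass $>1-1/m$, adjoin the countably many sets $\{A\cap K_m : A\in\mathcal{A},\ m\in\mathbb{N}\}$ to the family for which you fix Radon--Nikodym versions, and arrange on a further $\gamma_1$-full set that $f_A(x_1)=\sup_m f_{A\cap K_m}(x_1)$ for all $A\in\mathcal{A}$; approximation from inside by a countable compact class then forces $\sigma$-additivity of $A\mapsto f_A(x_1)$. Your alternative construction for $E_2=\mathbb{R}$ via right-continuized conditional distribution functions is complete as written and already covers every use of the theorem made in the paper, so the proposal is acceptable, with the caveat that the general separable-metric statement should be read with the Radon hypothesis of the cited source.
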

We can sum up the previous theorem by stating the existence of measures $\gamma_2^{x_1}$ such that
\begin{equation}
\gamma = \gamma_1 \otimes \gamma_2^{x_1}.
\end{equation}
$\gamma_2^{x_1}$ correspond to the notion of conditional distribution of the second marginal of $\gamma$ knowing the first marginal is equal to $x_1$. The disintegration can be a way to define conditioning according to Chang and Pollard \cite{chang97}. If $\gamma$ is absolutely continuous and we have denoted its density by $p$, the disintegrated measures $\gamma_1$ and $\gamma_2^{x_1}$ have respective densities : 
\begin{equation*}
p_1(x_1) := \int p(x_1,x_2)dx_2 \text{\ \    and\ \    } p_2^{x_1}(x_2) := \frac{p(x_1,x_2)}{p_1(x_1)}.
\end{equation*}

The Rosenblatt transport refer to the concatenation of univariate transports of disintegrated measures from $\nu$. More precisely in the case of the quantile, we recall that $\nu$ is a probability measure defined on the Borelian of $\mathbb{R}^d$. Let denote $\nu_1$, $\nu_2^{x_1}$, ..., $\nu_d^{x_1,\dots,x_{d-1}}$ the disintegration of $\nu$ and $F_1$, $F_2^{x_1}$,..., $F_d^{x_1,...,x_{d-1}}$ the corresponding cdf. Then the Rosenblatt quantile is defined by
\begin{equation}
Q(t_1,t_2,...,t_d) := \left(\begin{array}{c}
Q_1(t_1)\\
Q_2(t_1,t_2)\\
\vdots\\
Q_d(t_1,...,t_d)
\end{array}\right)
=
 \left(\begin{array}{c}
F_1^{-1}(t_1)\\
(F_2^{Q_1(t_1)})^{-1}(t_2)\\
\vdots\\
(F_d^{Q_1(t_1),...,Q_{d-1}(t_1,...,t_{d-1}) })^{-1}(t_d)
\end{array}\right).
\label{eq:rosenblattquantile}
\end{equation}

This construction transports the uniform distribution on $[0;1]^d$ into the distribution of the random vector $X$ and can be defined even if the distribution of $X$ is not absolutely continuous.\\
\begin{prop}
If $U$ is the uniform distribution on $[0;1]^d$ and $Q$ is a Rosenblatt quantile, then $Q(U) \eqlaw X$.
\end{prop}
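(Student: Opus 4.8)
The plan is to prove the pushforward statement $Q\#\mu=\nu$ (where $\mu=unif$ on $[0;1]^d$) by induction on the dimension $d$, using the disintegration structure that defines the Rosenblatt quantile in \eqref{eq:rosenblattquantile}. The base case $d=1$ is exactly the Proposition already proved above, asserting that the univariate generalized quantile $Q_1=F_1^{-1}$ pushes the uniform measure on $[0;1]$ onto $\nu_1$.

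For the inductive step, I would split coordinates as $(t_1,\dots,t_d)=(t_1,t')$ with $t'=(t_2,\dots,t_d)\in[0;1]^{d-1}$, and correspondingly write $Q(t_1,t')=(Q_1(t_1),\,\tilde Q_{t_1}(t'))$ where $\tilde Q_{t_1}$ is itself a Rosenblatt quantile in dimension $d-1$, namely the one built from the disintegrated family $\nu_2^{Q_1(t_1)},\,\nu_3^{Q_1(t_1),\cdot},\dots$. The key computation is then, for a product Borel set $A_1\times A'$ with $A_1\subset\mathbb{R}$ and $A'\subset\mathbb{R}^{d-1}$,
\begin{align*}
Q\#\mu(A_1\times A') &= \mu\bigl(\{(t_1,t'): Q_1(t_1)\in A_1,\ \tilde Q_{t_1}(t')\in A'\}\bigr)\\
&= \int_0^1 \mathds{1}_{Q_1(t_1)\in A_1}\,\Bigl(\int_{[0;1]^{d-1}}\mathds{1}_{\tilde Q_{t_1}(t')\in A'}\,dt'\Bigr)dt_1,
\end{align*}
where the inner integral is, by the induction hypothesis applied to $\tilde Q_{t_1}$, equal to $\nu_2^{Q_1(t_1)}\!\otimes\cdots(A')$, i.e. the conditional measure of the last $d-1$ coordinates given the first equals $Q_1(t_1)$. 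Using the $d=1$ result, the pushforward of $dt_1$ under $Q_1$ is $\nu_1$, so the outer integral becomes $\int_{A_1}\bigl(\text{cond.\ measure of }A'\text{ given }x_1\bigr)\,d\nu_1(x_1)$, which is precisely $\nu(A_1\times A')$ by the disintegration theorem. Since product sets generate the Borel $\sigma$-algebra of $\mathbb{R}^d$ and form a $\pi$-system, a monotone-class argument extends the identity to all Borel sets, giving $Q\#\mu=\nu$, equivalently $Q(U)\eqlaw X$.

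The main obstacle is measurability bookkeeping: one must check that $t_1\mapsto \tilde Q_{t_1}(t')$ is jointly measurable so that Fubini applies, and that the disintegrated cdf's $F_2^{x_1},\dots$ — hence their generalized inverses — depend measurably on the conditioning values; this is where the measurability clause of the disintegration theorem (the map $x_1\mapsto\gamma_2^{x_1}(A_2)$ being measurable) is used, together with the fact that a generalized inverse can be written as $\{t: (F_k^{\cdot})(x_k)\ge t_k\}$-type events. A secondary subtlety is that the conditioning chain in $\tilde Q_{t_1}$ is at $x_1=Q_1(t_1)$ rather than at a generic $x_1$, but since $Q_1\#(dt_1)=\nu_1$ this substitution is legitimate $\nu_1$-almost everywhere, which is all that is needed for the integral identity. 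Once these measurability points are settled, the argument is a routine iteration of the one-dimensional case combined with the disintegration formula.
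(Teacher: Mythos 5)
Your argument is correct and is essentially the paper's own proof: the paper verifies the identity against test functions in the bivariate case by applying the one-dimensional transport property to each coordinate in turn and then invoking the disintegration formula, which is exactly your inductive step written out for $d=2$ (the paper then asserts the general case is ``very similar''). Your version is marginally more complete in that you formalize the induction on $d$, pass from product sets to all Borel sets via a $\pi$-system argument, and explicitly flag the joint-measurability points needed for Fubini, which the paper's proof also relies on but leaves implicit.
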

\begin{proof}
We will prove the statement in the bivariate case for $Q = Q_{12}$ with
\begin{equation*}
Q_{12}(t_1,t_2) = \left(\begin{array}{cc} Q_1(t_1)\\ Q_2(t_1,t_2) \end{array}\right)
\end{equation*}
The generalization to the multivariate case is very similar.\\
Let $a$ be a function $dF$-measurable, then : 
\begin{align*}
\int_{[0;1]^2} a(Q_{12}(u))du &= \int_{[0;1]^2} a(Q_1(u), (F_2^{Q_1(u)})^{-1}(v))dudv\\
&= \int_0^1 \int_{\mathbb{R}} a(Q_1(u), y) dF_2^{Q_1(u)}(y) du\\
&= \int_{\mathbb{R}} \int_{\mathbb{R}} a(x, y) dF_2^{x}(y)dF_1(x)\\
&= \int_{\mathbb{R}^2} a(x, y) dF(x,y).
\end{align*}
The second and third equalities hold because the successive quantiles are one-dimensional transports.
\end{proof}

\begin{remark}
Carlier et al. \cite{carlier09} showed that the Rosenblatt transport can be viewed as a limit of optimal transports. Indeed, they showed that if we consider the cost depending on a parameter $\theta$ : 
\begin{equation*}
c_{\theta}(x,y) = \frac{1}{2} \sum_{k=1}^d \theta^{k-1} |x_k-y_k|^2
\end{equation*}
then the mapping $T_{\theta}$ solving the optimal transport with such a cost converges in $L^2$ to the Rosenblatt transport given by equation (\ref{eq:rosenblatt}) as $\theta$ goes to 0. We see once again that the Rosenblatt transport depends on the numbering order of the coordinates $x_1,..,x_d$ because $c_{\theta}$ is not symmetric with respect to the coordinates of $x$ and $y$.

\end{remark}

\subsection{The case of bivariate L-moments of the form $\lambda_{1r}$ and $\lambda_{r1}$}

We now consider a bivariate vector $X=(X_1,X_2)$. The two possible Rosenblatt quantiles are given by the successive conditional quantiles
\begin{equation*}
Q_{12}(t_1,t_2) = \left(\begin{array}{cc} Q_{X_1}(t_1)\\ Q_{X_2|X_1=Q_{X_1}(t_1)}(t_2) \end{array}\right)
\end{equation*}
or
\begin{equation*}
Q_{21}(t_1,t_2) = \left(\begin{array}{cc} Q_{X_1|X_2=Q_{X_2}(t_2)}(t_1)\\ Q_{X_2}(t_2) \end{array}\right)
\end{equation*}
where $Q_{X_1}, Q_{X_2}$ are the marginal quantiles of $X_1$ and $X_2$ and $Q_{X_2|X_1}, Q_{X_1|X_2}$ are the conditional quantiles.\\

The associated L-moments are then  :
\begin{equation*}
\begin{array}{ll}
	&\lambda^{(12)}_{\alpha} = \int_{[0;1]^2} Q_{12}(t_1,t_2)L_{\alpha}(t_1,t_2) dt_1dt_2\\
\text{or }&	\lambda^{(21)}_{\alpha} = \int_{[0;1]^2} Q_{21}(t_1,t_2)L_{\alpha}(t_1,t_2) dt_1dt_2
\end{array}
\end{equation*}
Here, the multi-indices $\alpha$ are couples $(r,s)$ for $r,s\geq 1$.\\
If we consider the pairs $(r,1)$ and $(1,s)$ and denote by $\lambda_r(X_i)$ the r-th univariate L-moment of $X_i$, we can express the corresponding L-moments :
\begin{equation}
	\lambda^{(12)}_{r1} = \int_{[0;1]^2} Q_{12}(t_1,t_2)L_r(t_1)dt_1dt_2 = 
   \left (
   \begin{array}{cc}
     \lambda_r(X_1) \\
     \mathbb{E}[L_r\circ F_1(X_1)\mathbb{E}[X_2|X_1]] \\
   \end{array}
   \right )
\end{equation}
and
\begin{equation}
	\lambda^{(21)}_{1s} = \int_{[0;1]^2} Q_{21}(t_1,t_2)L_s(t_2) dt_1dt_2 =
   \left (
   \begin{array}{cc}
     \mathbb{E}[L_s\circ F_2(X_2)\mathbb{E}[X_1|X_2]] \\
     \lambda_s(X_2) \\
   \end{array}
   \right ).
\end{equation}

Serfling and Xiao \cite{xiao07} implicitly used this transformation for a bivariate vector to define multivariate L-moments. For a multivariate vector $X=(X_1,...,X_d)$, they considered each pair $(X_i,X_j)_{1\leq i,j\leq d}$ which avoids considering the $d!$ ways to build the Rosenblatt transport and allows a straightforward estimation through the concomitants of the samples as we will see in the next section.\\
They named r-th multivariate L-moments  as the $d\times d$ matrix $\Lambda_r$ 
\begin{equation*}
\Lambda_r =
\left(
   \begin{array}{cccc}
     \Lambda_{r,11} & \Lambda_{r,12} &\dots & \Lambda_{r,1d} \\
     \Lambda_{r,21} & \Lambda_{r,22} & \ddots & \vdots \\
     \vdots & \ddots & \ddots & \vdots \\
     \Lambda_{r,d1} & \dots &\dots & \Lambda_{r,dd} \\
   \end{array}
   \right)
\end{equation*}
defined so that each $2\times 2$ submatrix is the concatenation of the above $2\times 1$ vectors : 
\begin{equation}
\left(
   \begin{array}{cc}
     \Lambda_{r,ii} & \Lambda_{r,ij} \\
     \Lambda_{r,ji} &  \Lambda_{r,jj} \\
   \end{array}
   \right)
=
\left(
   \begin{array}{ll}
     \lambda^{(ij)}_{1r} & \lambda^{(ji)}_{r1}\\
   \end{array}
   \right)
\end{equation}

\begin{example}
Unfortunately, these matrices are not sufficient for a total determination of a multivariate distribution. Let us present a copula that is an example of this assertion.\\
Let $\theta\in[-1;1]$ and $C_\theta(u,v)= uv + \theta K_a(u)K_b(v)$ for $u,v\in[0;1]$ with $a,b\geq3$. $C$ is a copula because :
\begin{itemize}
\item $C(1,v) = v$ for all $v\in[0;1]$, $C(u,1) = u$ for all $u\in[0;1]$ and $C(u,0) = C(0,v) = 0$ for all $u,v\in[0;1]$
\item if $u_1\leq u_2$ and $v_1\leq v_2$ : 
\begin{align*}
 C(u_2,v_2)& - C(u_1,v_2) - C(u_2,v_1) + C(u_1,v_1)\\=& (u_2 - u_1)(v_2 - v_1) + \theta(K_a(u_2)-K_a(u_1))(K_b(v_2)-K_b(v_1))\\ \geq& (1-\theta)  (u_2 - u_1)(v_2 - v_1) \geq 0
\end{align*}
because for all $a\geq 1$, $K_a$ is 1-Lipschitzian.
\end{itemize}
Furthermore, if we consider the matrices defined by Serfling and Xiao : 
\begin{equation*}
\Lambda_{r,11} = \Lambda_{r,11} =  \lambda_r(U([0;1]))
\end{equation*}
and
\begin{equation*}
\Lambda_{r,12} =  \mathbb{E}[L_r\circ F_1(X_1)\mathbb{E}[X_2|X_1]] = \int_{[0;1]^2} vL_r(u)dC_{\theta}(u,v) = \mathds{1}_{r=1}\frac{1}{2}.
\end{equation*}
Similarly, 
\begin{equation*}
\Lambda_{r,21} = \int_{[0;1]^2} uL_r(v)dC_{\theta}(u,v) = \mathds{1}_{r=1}\frac{1}{2}.
\end{equation*}
Hence, the whole family of cdf's $(C_{\theta})_{\theta\in[-1;1]}$ admits the same matrices $\Lambda_r$.
\end{example}

\textbf{Property of $\lambda_{r1}^{(12)}$ and $\lambda_{1r}^{(21)}$}\\

We will present properties for  $\lambda_{1r}^{(21)}$ that can be easily extended to  $\lambda_{r1}^{(12)}$. Although these specific L-moments do not completely characterize any bivariate distribution, they share some desirable properties.

\begin{prop}
Let us recall that the L-moments ratios are defined by (see Definition \ref{def_ratio})
\begin{equation*}
\tau_{1r}^{(21)} := \frac{\lambda_{1r}^{(21)}}{\lambda(X_2)}\in \mathbb{R}^2
\end{equation*}
Then, we have for $k=1,2$
\begin{equation}
|\langle \tau_{12}^{(21)},b_k\rangle|\leq 1
\end{equation}
\end{prop}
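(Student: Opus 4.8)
The plan is to analyze the two components of $\tau_{1r}^{(21)}$ separately, since $\lambda_{1r}^{(21)} = \left(\mathbb{E}[L_r\circ F_2(X_2)\mathbb{E}[X_1|X_2]],\ \lambda_r(X_2)\right)^T$. The second component of the ratio is $\lambda_r(X_2)/\lambda_2(X_2)$, which is exactly the univariate L-moment ratio of $X_2$, and the classical univariate bound $|\lambda_r(X)/\lambda_2(X)|\le 1$ (Hosking) gives that piece immediately; I would cite this as a known fact. So the whole difficulty is concentrated in the first component, namely showing
\begin{equation*}
\left|\mathbb{E}\bigl[L_r\circ F_2(X_2)\,\mathbb{E}[X_1|X_2]\bigr]\right|\le \lambda_2(X_2).
\end{equation*}

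First I would set $V=\mathbb{E}[X_1|X_2]$ and $W=F_2(X_2)$, so that $W$ is uniform on $[0;1]$ (ignoring atoms, or using the generalized inverse carefully) and the quantity to bound is $\mathbb{E}[V\,L_r(W)]$. Because $r\ge 2$, $L_r$ is orthogonal to constants, so $\mathbb{E}[V L_r(W)] = \mathbb{E}[(V-c)L_r(W)]$ for any constant $c$; combined with $|L_r|\le 1$ on $[0;1]$ this gives $|\mathbb{E}[V L_r(W)]|\le \mathbb{E}|V-c|$, and optimizing over $c$ (taking $c$ a median of $V$) yields $|\mathbb{E}[V L_r(W)]|\le \mathbb{E}|V-\tilde V|/1$... but this is not quite $\lambda_2(X_2)$ — it would be roughly $\lambda_2(V)$, the second L-moment of $V=\mathbb{E}[X_1|X_2]$. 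So the cruder $|L_r|\le 1$ route only reproduces the general Proposition \ref{prop_ratio} bound $|\langle\tau_\alpha,b_i\rangle|\le \lambda_2(V)/\lambda_2(X_i)$, not the sharper constant $1$. The sharpening must come from exploiting that $L_r$ is not merely bounded by $1$ but is a genuine Legendre polynomial, and in particular that, as for $r=2$, one can relate $\mathbb{E}[V L_r(W)]$ to a covariance-type expression and bound it by $2\,\mathrm{Cov}(V,F_V(V)) = \lambda_2(V)$ — wait, that still gives $\lambda_2(V)$.

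So the real content — and the main obstacle — is to prove $\lambda_2(V)\le\lambda_2(X_2)$ when $V=\mathbb{E}[X_1|X_2]$. This is where I expect the work to be: one needs that conditioning is a contraction for the L-scale functional $\lambda_2$. The key tool is that $\lambda_2(Z) = \mathbb{E}[Z\,(2F_Z(Z)-1)] = 2\,\mathrm{Cov}(Z,F_Z(Z))$ and that, among all random variables with a prescribed distribution of a comonotone "rank" variable, $\mathrm{Cov}(Z,F_Z(Z))$ is maximal — more precisely, I would use the rearrangement/Hardy–Littlewood inequality: for any uniform $W$ on $[0;1]$, $|\mathbb{E}[ZL_r(W)]| \le \mathbb{E}[Z^{*}L_r^{*}]$ where $Z^*, L_r^*$ are the monotone rearrangements, together with the fact that $V = \mathbb{E}[X_1|X_2]$ is an averaging of $X_1$ and hence (by Jensen applied to the convex functional $\lambda_2$, or by an explicit rearrangement argument using that $F_2$-sorting of $V$ is coarser than $F_1$-sorting of $X_1$) satisfies $\lambda_2(V)\le\lambda_2(X_1)$... but the claim is $\le\lambda_2(X_2)$, not $\lambda_2(X_1)$. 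Hence I suspect the intended route is different: write directly $\mathbb{E}[V L_r(W)] = \mathbb{E}[X_1 \,L_r(F_2(X_2))]$ (by the tower property, since $L_r(F_2(X_2))$ is $\sigma(X_2)$-measurable), then recognize that $L_r(F_2(X_2))$ has mean zero and, crucially, $|L_r(F_2(X_2))|\le 1$ is too weak but $L_r\circ F_2(X_2) = L_r(W)$ where... actually the cleanest path: $\mathbb{E}[X_1 L_r(W)] = \mathbb{E}[(X_1 - c)L_r(W)]$, and apply the covariance–Fréchet bound argument of Proposition \ref{prop_ratio} but now with the roles arranged so that the bounding variable is $F_2(X_2)=W$ itself and the "other" variable is $X_1$, giving $|\mathbb{E}[X_1 L_r(W)]| = 2|\mathrm{Cov}(X_1, g_r(W))|$ for an appropriate monotone reparametrization $g_r$ of $L_r$ on pieces — this does not immediately work since $L_r$ is not monotone for $r\ge 3$.

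Given these difficulties, my honest best guess for the intended proof is: decompose $[0;1]$ into the intervals of monotonicity of $L_r$, on each use a Fréchet/Hoeffding covariance bound coupling $X_1$ with the comonotone transform of $X_2$, and sum; the telescoping should collapse (because $L_r$ integrates to zero against constants and against lower-degree pieces) to precisely $2\,\mathrm{Cov}(X_2^{\uparrow}, F_2(X_2)) = \lambda_2(X_2)$, where $X_2^{\uparrow}$ denotes the comonotone version. The main obstacle, which I would flag explicitly, is handling the sign changes of $L_r$ so that the many Fréchet-bound terms combine with the right signs into a single clean $\lambda_2(X_2)$; this is the step I expect to require the most care, and it is plausible the authors simply reduce to the $r=2$ covariance computation already done in the proof of Proposition \ref{prop_ratio} after observing that the extremal coupling for $\mathbb{E}[X_1 L_r(W)]$ is the same comonotone coupling that makes $\mathbb{E}[X_1 L_r(W)] \le \mathbb{E}[X_2^{\uparrow} L_2(W)]$.
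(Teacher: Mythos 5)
Your proposal does not close, and the place where it stalls is worth pinpointing. First, the displayed claim is only about $\tau_{12}^{(21)}$, i.e. $r=2$; your concerns about the sign changes of $L_r$ for $r\geq 3$, and the concluding ``best guess'' about decomposing $[0;1]$ into intervals of monotonicity of $L_r$ and telescoping, are therefore beside the point and in any case do not constitute a proof. The paper's own argument is a one-line reduction to the second part of Proposition \ref{prop_ratio}. For the second coordinate this is immediate: $Q_2(t_1,t_2)=Q_{X_2}(t_2)$ gives $V=Q_{X_2}(U_2)\eqlaw X_2$, hence $|\langle\lambda_{12}^{(21)},b_2\rangle|\leq \lambda_2(X_2)$, and the ratio is bounded by $1$ (in fact it equals $1$, since $\langle\lambda_{12}^{(21)},b_2\rangle=\lambda_2(X_2)$). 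You get this coordinate right.

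The genuine gap is the first coordinate, and it stems from a misreading of the normalization. You correctly reach $|\langle\lambda_{12}^{(21)},b_1\rangle|\leq \lambda_2(V)$ with $V=\mathbb{E}[X_1|X_2]$ (this is exactly Proposition \ref{prop_ratio}), and you even mention that a Jensen argument would give $\lambda_2(V)\leq\lambda_2(X_1)$ --- but you then discard this because you take the target to be $\lambda_2(X_2)$. Under Definition \ref{def_ratio}, which the proposition explicitly invokes, the $i$-th coordinate of $\tau_\alpha$ is normalized by $\lambda_2(X_i)$, so the first coordinate is divided by $\lambda_2(X_1)$, and the step you abandoned is precisely what finishes the proof: with $(X_1',X_2')$ an independent copy of the pair and $V'=\mathbb{E}[X_1'|X_2']$, one has $|V-V'|=|\mathbb{E}[X_1-X_1'\mid X_2,X_2']|\leq\mathbb{E}[\,|X_1-X_1'|\mid X_2,X_2']$, whence $\lambda_2(V)=\frac{1}{2}\mathbb{E}|V-V'|\leq\frac{1}{2}\mathbb{E}|X_1-X_1'|=\lambda_2(X_1)$. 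With the literal normalization by $\lambda_2(X_2)$ appearing in the proposition's display, the first-coordinate bound is actually false in general (take $X_1=cX_2$ with $c$ large, which gives ratio $c$), so that display must be read as a misprint for Definition \ref{def_ratio}. Be aware, finally, that the paper's stated proof (``apply Proposition \ref{prop_ratio} with $V\eqlaw X_2$'') itself only literally covers the second coordinate; the Jensen contraction needed for the first coordinate is left implicit there as well.
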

where $(b_1,b_2)$ is the canonical basis of $\mathbb{R}^2$.
\begin{proof}
We apply Proposition \ref{prop_ratio} with $V\eqlaw X_2$.
\end{proof}

Let us suppose $X_{1},X_{2},...,X_{r}$ $r$ bivariate random samples. If we order the samples along the second coordinate i.e. $X_{2,(1:r)}\leq X_{2,(2:r)}\leq ...\leq X_{2,(r:r)}$, the remaining first coordinate $X_{1,(i:r)}$, paired with each $X_{2,(i:r)}$, is named the concomitant of $X_{2,(i:r)}$ (see Yang \cite{yang77} for a general study of concomitants). Furthermore, note
\begin{equation*}
X^{(21)}_{(i:r)} = \left(\begin{array}{c} X_{1,(i:r)}\\X_{2,(i:r)}\end{array}\right).
\end{equation*}
The superscript $(21)$ refers to the choice of $X_2$ as sorting coordinate. We can then have an analogue characterization of the multivariate L-moment as a linear combination of expectations of concomitants.

\begin{prop}
The r-th L-moment may be represented as 
\begin{equation}
\lambda_{1r}^{(21)} = \frac{1}{r}\sum_{j=0}^{r-1} (-1)^j \dbinom{j}{r-1}\mathbb{E}[X_{(r-j:r)}^{(21)}]
\end{equation}
\end{prop}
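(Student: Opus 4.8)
The plan is to reduce the bivariate claim to the classical univariate order-statistic representation of L-moments (the identity underlying the equivalence of \eqref{eq:L_mom_ch2} and \eqref{eq:iintlmom}), applied separately to each coordinate, and to handle the first coordinate via the distribution theory of concomitants.

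First I would isolate the following univariate lemma, valid for any real random variable $Y$ with quantile function $Q_Y$ and any $h$ with $\mathbb{E}[|h(Y)|]<\infty$:
\[
\int_0^1 h(Q_Y(t))\,L_r(t)\,dt \;=\; \frac{1}{r}\sum_{k=0}^{r-1}(-1)^k\binom{r-1}{k}\,\mathbb{E}\big[h(Y_{r-k:r})\big],
\]
where $Y_{1:r}\le\dots\le Y_{r:r}$ are the order statistics of an i.i.d.\ $r$-sample from $\mathcal{L}(Y)$. This is proved exactly as in Hosking's original computation: since $Q_Y$ is nondecreasing, $Y_{i:r}\eqlaw Q_Y(U_{i:r})$ for the order statistics $U_{i:r}$ of $r$ i.i.d.\ uniforms, so $\mathbb{E}[h(Y_{r-k:r})]=\int_0^1 h(Q_Y(t))\,\tfrac{r!}{(r-1-k)!\,k!}\,t^{r-1-k}(1-t)^k\,dt$; plugging this into the alternating sum and simplifying $\tfrac{1}{r}\binom{r-1}{k}\tfrac{r!}{(r-1-k)!\,k!}=\binom{r-1}{k}^2$ produces $\sum_{k=0}^{r-1}(-1)^k\binom{r-1}{k}^2 t^{r-1-k}(1-t)^k$, which is $L_r(t)$ by \eqref{legendre_pol}.

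Next I would compute the two components of $\lambda_{1r}^{(21)}$ directly from $\lambda_{1r}^{(21)}=\int_{[0;1]^2}Q_{21}(t_1,t_2)L_r(t_2)\,dt_1 dt_2$, using $Q_{21}(t_1,t_2)=\big(Q_{X_1\mid X_2=Q_{X_2}(t_2)}(t_1),\,Q_{X_2}(t_2)\big)^{T}$. Integrating out $t_1$ in the second component gives $\int_0^1 Q_{X_2}(t_2)L_r(t_2)\,dt_2=\lambda_r(X_2)$; in the first component the inner integral over $t_1$ is the mean of the conditional law, i.e.\ $\mathbb{E}[X_1\mid X_2=Q_{X_2}(t_2)]=:g(Q_{X_2}(t_2))$, with $g$ integrable since $\mathbb{E}[\|X\|]<\infty$, so the first component equals $\int_0^1 g(Q_{X_2}(t_2))L_r(t_2)\,dt_2$. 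Applying the univariate lemma with $Y=X_2$ and $h=\mathrm{id}$ (resp.\ $h=g$) rewrites these as $\tfrac1r\sum_k(-1)^k\binom{r-1}{k}\mathbb{E}[X_{2,(r-k:r)}]$ and $\tfrac1r\sum_k(-1)^k\binom{r-1}{k}\mathbb{E}[g(X_{2,(r-k:r)})]$.

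Finally I would identify $\mathbb{E}[g(X_{2,(i:r)})]$ with $\mathbb{E}[X_{1,(i:r)}]$, the expected $i$-th concomitant. This rests on the structural fact about concomitants (Yang \cite{yang77}): for an i.i.d.\ bivariate sample, conditionally on the ordered second coordinates the concomitants are independent and $X_{1,(i:r)}\sim\mathcal{L}\big(X_1\mid X_2=X_{2,(i:r)}\big)$ — once we know which pair realizes the $i$-th largest value of $X_2$, its matching first coordinate has the conditional law of $X_1$ given $X_2$. Taking expectations yields $\mathbb{E}[X_{1,(i:r)}]=\mathbb{E}[g(X_{2,(i:r)})]$, and assembling the two coordinates gives
\[
\lambda_{1r}^{(21)}=\frac{1}{r}\sum_{j=0}^{r-1}(-1)^j\binom{r-1}{j}\myvecz{\mathbb{E}[X_{1,(r-j:r)}]}{\mathbb{E}[X_{2,(r-j:r)}]}=\frac{1}{r}\sum_{j=0}^{r-1}(-1)^j\binom{r-1}{j}\,\mathbb{E}\big[X^{(21)}_{(r-j:r)}\big],
\]
which is the claimed representation (the binomial coefficient being $\binom{r-1}{j}$, as in \eqref{eq:L_mom_ch2}). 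The main obstacle is precisely this concomitant identity: it is classical, but its clean form needs the i.i.d.\ sampling scheme and, to make conditioning on the ordered second coordinates unambiguous, enough regularity of $F_{X_2}$ to exclude ties — automatic under the absolute-continuity hypothesis usually attached to the Rosenblatt transform, and otherwise requiring a tie-breaking or approximation argument.
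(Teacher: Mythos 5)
Your proof is correct and follows essentially the same route as the paper's: both rest on the concomitant identity $\mathbb{E}[X_{1,(i:r)}]=\mathbb{E}\big[\mathbb{E}[X_1\mid X_2=\cdot]\,(X_{2,(i:r)})\big]$ (Yang) combined with Hosking's coefficient manipulation turning the beta densities of the uniform order statistics into the shifted Legendre polynomial $L_r$. The paper merely sketches these two steps (``by analogy with standard order statistics'' and ``reorganize the coefficients''), whereas you spell them out, including the integrability and no-ties caveats; there is no substantive difference in method.
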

\begin{proof}
Let $i\leq r$. We have $\mathbb{E}[X_{1,(i:r)}^{(21)}]=r\mathbb{E}[X_{1}|X_{2}=X_{2,(i:r)}]$ i.e. by analogy with standard order statistics
\begin{equation*}
\mathbb{E}[X_{1,(i:r)}^{(21)}] = r\dbinom{i-1}{r-1}\int_{-\infty}^{+\infty}\int_{-\infty}^{+\infty}x_1(F_2(x_2))^{i-1}(1-F_2(x_2))^{r-i}dF(x_1,x_2)
\end{equation*}
We continue the analogy with the dimension 1 to reorganize the coefficients and conclude.
\end{proof}

This characterization allows us to use L-statistics and U-statistics representation especially in order to build unbiased estimators (see \cite{xiao07}).

\section{Estimation of L-moments}

Let $x_{1},...,x_{n}$ be independently drawn from a common random variable $X\in\mathbb{R}^d$ of measure $\nu$. We will note $\nu_n=\sum_{i=1}^n \delta_{x_{(i)}}$ the empirical measure. The estimation of multivariate L-moments is built from an estimation of the quantile function, say $Q_n$. This section considers the estimation of $Q$, leading to explicit formulas for $Q_n$. The L-moments are estimated by plug-in, through
\begin{equation}
\hat\lambda_{\alpha} = \int_{[0;1]^d} Q_n(t)L_{\alpha}(t)dt
\end{equation}
The simplest idea for the estimation of $Q$ is to build the transport between the continuous uniform distribution on $\Omega=[0;1]^d$ and the discrete measure $\nu_n$ which is possible for the considered transports.

\subsection{Estimation of the Rosenblatt transport}

The estimation of this transport is attractive due to its simplicity and its similarity with the univariate case.\\
We suppose that the sampling distribution is \textbf{absolutely continuous} with respect to the Lebesgue measure. Then for two random samples $X_{i}$ and $X_{j}$, $\mathbb{P}[X_{i}=X_{j}]=0$.\\
If we denote by $Q_n$ the empirical quantile built from the construction of the equation \ref{eq:rosenblattquantile} for $\nu=\nu_n$, then $Q_n:[0;1]^d \rightarrow  \{x_{1},...,x_{n}\}$ is defined with probability 1 for all $1\leq i\leq n$ by :
\begin{equation*}
Q_n(u_1,...,u_d) = x_{(i:n)}^{(1)}\ \ \  \text{   for any   } u_1\in\left[\frac{i-1}{n};\frac{i}{n}\right), u_2,...,u_d\in[0;1]
\end{equation*}
where $x_{(1:n)}^{(1)}, x_{(2:n)}^{(1)}, \dots, x_{(n:n)}^{(1)}$ denote the samples sorted by their first coordinate. Recall that we call the $(d-1)$ last components of $x_{(i:n)}^{(1)}$ the concomitants of its first component \cite{yang77}.
\begin{remark}
If the sampling distribution is discrete for example, the expression of the quantile will be more complicated since the law of $X_2|X_1=x_1$ is not reduced to a single point.
\end{remark}

Therefore, a natural version for the estimated L-moments associated to the Rosenblatt quantile could be :
\begin{equation}
\hat\lambda_{(i_1,...,i_d)} = \int_{[0;1]^d} Q_n(u) L_{(i_1,...,i_d)}(u) du = \sum_{i=1}^n w_i^{(i_1)}x_{(i:n)}^{(1)}
\end{equation}
where $(i_1,...,i_d)\in\mathbb{N}_*^d$ and 
\begin{equation*}
w_i^{(i_1)}=\int_{(i-1)/n}^{i/n} L_{i_1}(u_1)du_1\mathds{1}_{i_2=1}...\mathds{1}_{i_d=1}
\end{equation*}
 are the weights of the estimators of the $ i_1$-th univariate L-moments. Therefore, this estimator has an interest only for L-moments of the form $\lambda_{i_1,1,...,1}$. We will restrict ourselves to this case for L-moments associated to Rosenblatt quantiles. Serfling and Xiao \cite{xiao07} proposed a slightly different estimator which is the unbiased version of the above estimator :
\begin{equation}
\hat\lambda_{(i_1,1,...,1)}^{(u)} = \sum_{i=1}^n v_i^{(i_1)}x_{(i:n)}^{(1)}
\end{equation}
with  
\begin{equation*}
v_i^{(i_1)} = \sum_{j=0}^{\min(i-1,i_1-1)} (-1)^{i_1-1-j} \dbinom{j}{i_1-1}  \dbinom{j}{i_1-1+j}  \dbinom{j}{n-1}^{-1}  \dbinom{j}{i-1}.
\end{equation*}

Moreover, the consistency of both estimators holds for bivariate random vectors but in general fails to hold for vectors of dimension $d>2$ :
\begin{theorem}
If we define for all $u\in[0;1]^d$ :
\begin{equation}
Q^{(1)}(u) = \myvecs{Q_{X_1}(u_1)}{Q_{X_2|X_1=Q_{X_1}(u_1)}(u_2)}{Q_{X_d|X_1=Q_{X_1}(u_1)}(u_d)}
\end{equation}
then
\begin{equation}
\hat\lambda_{(i_1,1,...,1)} \ps \int_{[0;1]^d} Q^{(1)}(u)L_{(i_1,1,...,1)}(u)du = \lambda_{(i_11,...,1)}
\end{equation}
and
\begin{equation}
\hat\lambda^{(u)}_{(i_1,1,...,1)} \ps \int_{[0;1]^d} Q^{(1)}(u)L_{(i_1,1,...,1)}(u)du = \lambda_{(i_11,...,1)}.
\end{equation}
\end{theorem}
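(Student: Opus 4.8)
The plan is to reduce the multivariate almost-sure convergence to a sequence of one-dimensional Glivenko--Cantelli-type statements, exploiting that only L-moments of the form $\lambda_{(i_1,1,\dots,1)}$ are involved and that the estimator $\hat\lambda_{(i_1,1,\dots,1)}$ is a linear combination of the concomitants $x_{(i:n)}^{(1)}$ sorted by the first coordinate. First I would rewrite the limiting integral: since $L_{(i_1,1,\dots,1)}(u) = L_{i_1}(u_1)$ does not depend on $u_2,\dots,u_d$, integrating out $u_2,\dots,u_d$ against the successive conditional quantiles gives, componentwise,
\begin{equation*}
\int_{[0;1]^d} Q^{(1)}(u)L_{i_1}(u_1)du = \int_0^1 L_{i_1}(u_1)\myvecs{Q_{X_1}(u_1)}{\mathbb{E}[X_2\mid X_1=Q_{X_1}(u_1)]}{\mathbb{E}[X_d\mid X_1=Q_{X_1}(u_1)]}du_1,
\end{equation*}
so that the first component is the univariate L-moment $\lambda_{i_1}(X_1)$ and the $k$-th component ($k\ge 2$) equals $\mathbb{E}[L_{i_1}\circ F_1(X_1)\,\mathbb{E}[X_k\mid X_1]]$. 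In parallel, $\hat\lambda_{(i_1,1,\dots,1)} = \sum_{i=1}^n w_i^{(i_1)} x_{(i:n)}^{(1)}$ with $w_i^{(i_1)}=\int_{(i-1)/n}^{i/n} L_{i_1}(u_1)du_1$; the first component of this is exactly the classical sample L-moment of the first marginal, whose strong consistency is the Hosking--Serfling--Xiao theory and may be invoked directly.

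The substantive work is the convergence of the remaining $d-1$ components. Here I would write $\hat\lambda_{(i_1,1,\dots,1)}$, restricted to the $k$-th coordinate, as $\sum_{i=1}^n w_i^{(i_1)} x_{k,(i:n)}$ where $x_{k,(i:n)}$ is the concomitant (the $k$-th coordinate of the sample whose first coordinate has rank $i$). The idea is to recognise this as a Riemann-type sum: for $u_1$ in the interval $[\tfrac{i-1}{n},\tfrac{i}{n})$ the empirical Rosenblatt quantile $Q_n$ returns the $i$-th order-statistic sample, so $\hat\lambda = \int_0^1 L_{i_1}(u_1)\, \big(Q_n\big)_{2:d}(u_1)\,du_1$ where $(Q_n)_{2:d}(u_1)$ denotes the concomitant vector attached to rank $\lceil nu_1\rceil$. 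It then suffices to prove that this piecewise-constant function converges, in an $L^1([0;1])$ sense strong enough to pass to the integral against the bounded weight $L_{i_1}$, to the function $u_1\mapsto\mathbb{E}[X_k\mid X_1=Q_{X_1}(u_1)]$, almost surely. This is precisely a statement about the almost-sure convergence of the empirical regression-through-concomitants estimator; in the bivariate case it is established in Yang \cite{yang77} and used by Serfling and Xiao \cite{xiao07}, and it is exactly the point at which $d=2$ matters. I would isolate this as the key lemma and cite \cite{yang77} and \cite{xiao07} for it, noting that the argument combines the Glivenko--Cantelli theorem for $F_1$ (so that ranks track $F_1$ uniformly), the continuity of $x_1\mapsto\mathbb{E}[X_k\mid X_1=x_1]$ (guaranteed by absolute continuity), and a uniform law of large numbers for the conditional means over dyadic blocks.

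For the unbiased variant $\hat\lambda^{(u)}_{(i_1,1,\dots,1)} = \sum_i v_i^{(i_1)} x_{(i:n)}^{(1)}$, I would not redo the analysis but instead show $\hat\lambda^{(u)}_{(i_1,1,\dots,1)} - \hat\lambda_{(i_1,1,\dots,1)} \to 0$ almost surely. Both are linear functionals of the same ordered data with deterministic weights, and the difference of the weight vectors $v_i^{(i_1)} - w_i^{(i_1)}$ can be bounded: as $n\to\infty$ the U-statistic weights $v_i^{(i_1)}$ are asymptotic to the plug-in weights $w_i^{(i_1)}$ with an error that is $O(1/n)$ uniformly in $i$ (this is the standard relationship between the unbiased and plug-in sample L-moments in the univariate theory, applied coordinate by coordinate to the concomitants), while $\max_i\|x_{(i:n)}^{(1)}\|$ grows slowly enough — under $\mathbb{E}[\|X\|]<\infty$, $\tfrac1n\max_i\|x_{(i:n)}^{(1)}\|\to 0$ a.s. — that $\sum_i |v_i^{(i_1)} - w_i^{(i_1)}|\,\|x_{(i:n)}^{(1)}\| \to 0$. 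Combining, $\hat\lambda^{(u)}$ has the same almost-sure limit as $\hat\lambda$, which is $\lambda_{(i_1,1,\dots,1)}$ by the first part. The main obstacle is the key lemma on concomitant regression convergence; everything else is bookkeeping that reduces cleanly to univariate L-moment theory and Glivenko--Cantelli.
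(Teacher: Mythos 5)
Your proposal is correct and follows essentially the same route as the paper: the first coordinate is handled by the strong consistency of univariate sample L-moments (Hosking), and each of the remaining $d-1$ coordinates is recognised as a linear combination of concomitants sorted by the first coordinate, whose almost-sure convergence to $\mathbb{E}[L_{i_1}\circ F_1(X_1)\,\mathbb{E}[X_k\mid X_1]]$ is exactly Yang's theorem, which the paper also invokes. Your additional bookkeeping — identifying the limiting integral explicitly after integrating out $u_2,\dots,u_d$, and showing that the unbiased weights $v_i^{(i_1)}$ and the plug-in weights $w_i^{(i_1)}$ yield asymptotically identical estimators — is a more detailed rendering of what the paper's two-sentence proof leaves implicit, not a different argument.
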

\begin{proof}
The convergence of the first coordinate of $\hat\lambda_{(i_1,...,i_d)}$ or $\hat\lambda^{(u)}_{(i_1,...,i_d)}$ directly comes from the univariate L-moments convergence results \cite{hosking90}. The $(d-1)$ remaining coordinate converge as an application of the theorem of convergence for the linear combinations of concomitants \cite{yang77}.
\end{proof}

\begin{remark}
An other idea for the estimation of the Rosenblatt L-moments is to consider the Rosenblatt construction of the quantile with a smoothed version of the empirical distribution. If this smoothed version (for example a kernel version) is absolutely continuous with respect to the Lebesgue measure, then consistency would hold.
\end{remark}

\subsection{Estimation of a monotone transport}
\label{section_estimation}
We will show here that the monotone transport from any absolutely continuous distribution onto a discrete one is the gradient of a piecewise linear function. We present the construction of the monotone transport of an absolutely continuous measure $\mu$  defined on $\mathbb{R}^d$ onto $\nu_n=\sum_{i=1}^n \delta_{x_{i}}$. Here, $\mu$ will typically be either the standard Gaussian measure on $\mathbb{R}^d$ or the uniform measure on $[0;1]^d$. We will denote by $\Omega$ the support of $\mu$. 

\subsubsection{Power diagrams}
Here, we briefly present power diagrams, a tool generalizing Voronoi diagrams and coming from computational geometry, which is useful for the  representation of the discrete optimal transport.
\begin{definition}
Let $x_1, ..., x_n\in\mathbb{R}^d$ and their associated weights $w_1,...,w_n\in\mathbb{R}$. The power diagram of $(x_1,w_1), ..., (x_n,w_n)$ is the subdivision of $\Omega$ into $n$ polyhedra given by : 
\begin{equation}
\Omega = \bigcup_{1\leq i\leq n} PD_i =\bigcup_{1\leq i\leq n}  \left\{u\in\Omega \text{ s.t. } \|u-x_i\|^2 + w_i \leq \|u-x_j\|^2 + w_j\ \ \forall j\ne i \right\}
\end{equation}
\end{definition}
\begin{remark}
If the weights are all zero and $x_1,...,x_n\in\Omega$, then the power diagram is the Voronoi diagram. 
\end{remark}
Convex piecewise linear functions are strongly related to power diagrams through their gradient. Indeed, let $\phi_h : \Omega \rightarrow \mathbb{R}$ be a piecewise linear function. Assume that $\phi_h$ is parametrized by $h=\myvec{h_1}{h_n}\in\mathbb{R}^n$. Define then $\phi_h$ explicitly through :
\begin{equation}
\label{eq:potential}
\text{for any  } u \in\Omega, \ \ \  \phi_h(u) = \max_{1\leq i \leq n} \left\{u.x_i + h_i\right\}.
\end{equation}
Let $(W_i(h))_{1\leq i \leq n}$ be the polyhedron partition of $\Omega$ defined by 
\begin{equation*}
W_i(h) = \{u\in\Omega \text{  s.t. } \nabla \phi_h(u) = x_i\}.
\end{equation*}
 This subdivision is often called the natural subdivision associated to the piecewise linear function $\phi_h$. Then, we have the following lemma :
\begin{lemma}
The power diagram associated to $(x_1,w_1), ..., (x_n,w_n)$ is the polyhedron partition $\cup_{1\leq i\leq n} W_i(h)$ if $h_i = -\frac{\|x_i\|^2+w_i}{2}$.
\label{power_lemma}
\end{lemma}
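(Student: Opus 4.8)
The plan is to show that the two polyhedral partitions coincide cell by cell, by comparing the defining inequalities. First I would fix the substitution $h_i = -\tfrac{1}{2}(\|x_i\|^2 + w_i)$ and expand the quantity $u.x_i + h_i$ appearing in \eqref{eq:potential}. Completing the square, $u.x_i + h_i = u.x_i - \tfrac12\|x_i\|^2 - \tfrac12 w_i = -\tfrac12\big(\|u-x_i\|^2 + w_i\big) + \tfrac12\|u\|^2$. The crucial observation is that the term $\tfrac12\|u\|^2$ does not depend on the index $i$, so it plays no role when we take the maximum over $i$ or compare two indices.

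Next I would translate the condition defining $W_i(h)$. Since $\phi_h$ is a maximum of affine functions, its gradient on the interior of a cell equals $x_i$ exactly on the set where the $i$-th affine piece achieves the maximum, i.e. $u \in W_i(h)$ iff $u.x_i + h_i \geq u.x_j + h_j$ for all $j \neq i$. Substituting the expansion above and cancelling the common $\tfrac12\|u\|^2$, this inequality becomes $-\tfrac12(\|u-x_i\|^2 + w_i) \geq -\tfrac12(\|u-x_j\|^2 + w_j)$, equivalently $\|u-x_i\|^2 + w_i \leq \|u-x_j\|^2 + w_j$. This is precisely the inequality defining the power cell $PD_i$. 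Hence $W_i(h) = PD_i$ for each $i$, and taking the union over $i$ gives the claimed identity of partitions.

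The argument is essentially a one-line algebraic manipulation, so there is no serious obstacle; the only point requiring a little care is the identification of $W_i(h)$ — defined via $\nabla\phi_h(u) = x_i$ — with the sublevel description $\{u : u.x_i + h_i \geq u.x_j + h_j\ \forall j\}$. On the interiors this is immediate from the fact that a max of affine functions is differentiable with gradient equal to the (unique) active piece; on the lower-dimensional boundary set the gradient is not single-valued, but both the power diagram and the natural subdivision are closed polyhedral partitions that agree on interiors, so they agree everywhere. I would state this identification explicitly (or restrict attention to the cells as closed polyhedra defined by the inequalities, which is the standard convention for both objects) and then the proof is complete.
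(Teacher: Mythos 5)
Your proof is correct and follows essentially the same route as the paper: substitute $h_i=-\tfrac12(\|x_i\|^2+w_i)$, complete the square, cancel the index-independent $\tfrac12\|u\|^2$, and observe that the inequality $u.x_i+h_i\geq u.x_j+h_j$ defining $W_i(h)$ becomes exactly $\|u-x_i\|^2+w_i\leq\|u-x_j\|^2+w_j$. Your extra remark on identifying $W_i(h)$ (defined via $\nabla\phi_h(u)=x_i$) with the closed cell given by the inequalities is a welcome clarification that the paper leaves implicit.
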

\begin{proof}
The proof is straightforward since $ \nabla \phi_h(u) =x_i$ iff $u.x_i+h_i\geq u.x_j+h_j$ for all $j$ which is equivalent to $\|u-x_i\|^2+2h_i-\|x_i\|^2 \geq \|u-x_j\|^2+2h_j-\|x_j\|^2$.
\end{proof}

\subsubsection{Discrete monotone transport}

We will present a variational approach initially proposed by Aurenhammer \cite{aurenhammer87} for the quadratic optimal transportation problem between a probability measure $\mu$ defined on $\Omega$ and the empirical distribution of a sample $x_1,...,x_n$ which is denoted by $\nu_n$.\\
Let $\phi_h : \Omega \rightarrow \mathbb{R}$ be the piecewise linear function defined by Equation (\ref{eq:potential}).\\

\begin{theorem}
Let us suppose that $x_1,...,x_n$ are distinct points of $\mathbb{R}^d$. Let $\Omega$ be a convex domain of $\mathbb{R}^d$ such that $vol(\Omega)>0$ and $\mu$ an absolutely continuous probability measure with finite expectation.\\
Then $\nabla \phi_h$ is piecewise constant and is a monotone transport of $\mu$ into $\nu_n$ with a particular $h=h^*$, unique up to a constant $(b,...,b)$, which is the minimizer of an energy function $E$ 
\begin{equation}
h^* = \arg\min_{h\in\mathbb{R}^n} E(h) =  \arg\min_{h\in\mathbb{R}^n} \int_{\Omega}  \phi_h(u) d\mu - \frac{1}{n}\sum_{i=1}^n h_i.
\end{equation}
Furthermore, $E$ is strictly convex on
\begin{equation*}
H_0^{(n)}=\left\{h\in\mathbb{R}^n \text{ s.t. } \text{    for any   } 1\leq i \leq n\text{ , } W_i(h)\ne \emptyset \text{\ \ and\ \ } \sum_{i=1}^n h_i=0\right\}.
\end{equation*}
\end{theorem}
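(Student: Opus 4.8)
The plan is to carry out Aurenhammer's variational argument \cite{aurenhammer87}, whose main ingredients are the convex analysis of the piecewise linear function $\phi_h$ together with a first–order optimality condition for the energy $E$. First I would record the elementary structure of $\phi_h$: as a finite maximum of affine functions it is convex and piecewise affine on $\Omega$, hence differentiable off a finite union of hyperplanes, and on the interior of the cell
\[
W_i(h)=\{u\in\Omega : u.x_i+h_i\geq u.x_j+h_j\ \ \forall j\ne i\}
\]
one has $\nabla\phi_h\equiv x_i$. Thus $\nabla\phi_h$ is piecewise constant with values in $\{x_1,\dots,x_n\}$; and since the separating hyperplanes $\{u.(x_i-x_j)=h_j-h_i\}$ are Lebesgue-negligible while $\mu\ll\mathrm{Leb}$, they are $\mu$-null, so $(\nabla\phi_h)\#\mu=\sum_{i=1}^n\mu\big(W_i(h)\big)\delta_{x_i}$.

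Next I would establish the properties of $E$. For each $u$, the map $h\mapsto\phi_h(u)=\max_i\{u.x_i+h_i\}$ is convex, integration against $\mu$ preserves convexity (the integral is finite since $|\phi_h(u)|\leq\|u\|\max_j\|x_j\|+\|h\|_\infty$ and $\mu$ has finite first moment), and $-\frac1n\sum_i h_i$ is linear, so $E$ is convex on $\mathbb R^n$. For the gradient, observe that for $\mu$-a.e.\ $u$ a single index attains the maximum in $\phi_h(u)$, whence $\partial_{h_i}\phi_h(u)=\mathbf 1_{u\in W_i(h)}$; the difference quotients of $\phi_h$ in $h_i$ are bounded by $1$, so dominated convergence gives $E\in C^1$ with
\[
\partial_{h_i}E(h)=\mu\big(W_i(h)\big)-\tfrac1n .
\]
Finally $E(h+b\mathbf 1)=E(h)$ for every $b\in\mathbb R$, so minimising $E$ over $\mathbb R^n$ amounts to minimising it over the hyperplane $\{\sum_i h_i=0\}$, on which $E(h)=\int_\Omega\phi_h\,d\mu$.

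To produce $h^*$ I would prove coercivity on $\{\sum_i h_i=0\}$: if $h^{(k)}$ lies on this hyperplane and $\|h^{(k)}\|\to\infty$ then $\max_i h_i^{(k)}\to+\infty$, and choosing $i_k$ realising the maximum gives $\phi_{h^{(k)}}(u)\geq u.x_{i_k}+h_{i_k}^{(k)}\geq h_{i_k}^{(k)}-\|u\|\max_j\|x_j\|$, whence $E(h^{(k)})\geq h_{i_k}^{(k)}-\max_j\|x_j\|\int\|u\|\,d\mu\to+\infty$ (the finite-expectation hypothesis is essential here, since $\Omega$ need not be bounded). A continuous, convex, coercive function on a finite-dimensional affine space attains its infimum at some $h^*$; by the translation symmetry $h^*$ also minimises $E$ over $\mathbb R^n$, so $\nabla E(h^*)=0$, i.e.\ $\mu\big(W_i(h^*)\big)=\frac1n$ for all $i$. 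In particular each cell carries positive mass, hence is nonempty, so $h^*\in H_0^{(n)}$; and by the first paragraph $(\nabla\phi_{h^*})\#\mu=\frac1n\sum_i\delta_{x_i}=\nu_n$, with $\nabla\phi_{h^*}$ monotone since $\phi_{h^*}$ is convex.

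The remaining point, which I expect to be the main obstacle, is strict convexity of $E$ on $H_0^{(n)}$; uniqueness of $h^*$ up to an additive $(b,\dots,b)$ then follows from strict convexity together with the translation symmetry. On $H_0^{(n)}$ every $W_i(h)$ is full-dimensional, and on the open set where the combinatorial type of the power diagram is locally constant one may differentiate $\partial_{h_i}E$ once more: for $i\ne j$, $\partial_{h_j}\partial_{h_i}E(h)=-\frac{1}{\|x_i-x_j\|}\int_{W_i(h)\cap W_j(h)}\rho\,d\mathcal H^{d-1}$ (with $\rho$ the density of $\mu$), and $\partial_{h_i}^2E(h)=\sum_{j\ne i}\frac{1}{\|x_i-x_j\|}\int_{W_i(h)\cap W_j(h)}\rho\,d\mathcal H^{d-1}$, so the Hessian is the Laplacian of the weighted graph on $\{1,\dots,n\}$ whose edge $ij$ carries the weight $\frac{1}{\|x_i-x_j\|}\int_{W_i\cap W_j}\rho\,d\mathcal H^{d-1}\geq 0$. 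Such a Laplacian is positive definite on $\{\sum_i h_i=0\}$ precisely when the subgraph of positive-weight edges is connected, and I would establish connectedness as follows: since $\Omega$ is convex, hence connected, and is the essentially disjoint union of the full-dimensional polyhedra $W_i(h)$, if the facet-adjacency graph split into two classes $A$ and $A^c$ the sets $\bigcup_{i\in A}W_i$ and $\bigcup_{i\notin A}W_i$ would meet only along faces of dimension at most $d-2$, which cannot disconnect $\Omega$ — a contradiction (this is also where one uses $\rho>0$ on $\Omega$, as holds for the uniform and Gaussian source measures of interest, so that every $(d-1)$-facet has positive $\mu$-surface measure). Hence $E$ is strictly convex on $H_0^{(n)}$, and the technical details of this last step can be completed by appeal to \cite{aurenhammer87}.
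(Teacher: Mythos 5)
Your proposal is correct and follows the same variational route as the paper's Appendix (adapted from Aurenhammer and Gu et al.): convexity and $C^1$-smoothness of $E$ with $\partial_{h_i}E(h)=\mu(W_i(h))-\tfrac{1}{n}$, and strict convexity on $H_0^{(n)}$ from the graph-Laplacian structure of the Hessian together with the connectedness of the facet-adjacency graph forced by convexity of $\Omega$. There are, however, three points where you genuinely add to or diverge from the paper's argument, all to the good. First, you prove \emph{existence} of the minimizer $h^*$ by a coercivity argument on the hyperplane $\{\sum_i h_i=0\}$, correctly isolating the finite-first-moment hypothesis as the reason this works on an unbounded $\Omega$; the paper establishes convexity, the gradient formula and strict convexity, but never shows that the infimum is attained, so this step closes a real gap. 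Second, you verify the transport property directly from the first-order condition $\mu(W_i(h^*))=\tfrac{1}{n}$ and the pushforward identity $(\nabla\phi_{h^*})\#\mu=\sum_i\mu(W_i(h^*))\delta_{x_i}$, obtaining monotonicity for free from the convexity of $\phi_{h^*}$; the paper instead reproves quadratic optimality via the power-diagram inequality, which is more than the statement of the theorem requires. Third, your kernel argument for the Hessian (the set $\{j: y_j=\max_i y_i\}$ is closed under positive-weight adjacency, hence is everything by connectedness) is cleaner than the paper's claim that the cells can be linearly ordered so that consecutive ones share a codimension-one face, which is a Hamiltonian-path assertion stronger than what is needed or actually proved. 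One caveat you rightly flag, and which applies equally to the paper's proof: strict convexity requires the density of $\mu$ to be positive on the shared facets (true for the uniform and Gaussian source measures actually used), a hypothesis not stated explicitly in the theorem.
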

\label{opt_discrete}
\begin{proof}
The proof of Theorem 1.2 of Gu et al. can be extended to the case of a measure $\mu$ defined on an arbitrary convex set $\Omega\subset \mathbb{R}^d$. However, we do not prove that $\nabla E$ is a local diffeomorphism.\\
The proof is delayed to the Appendix.
\end{proof}
\begin{remark}
The convexity of the domain $\Omega$ is needed in order to ensure that $H_0^{(n)}$ is non void.
\end{remark}

As exposed in the Appendix, the gradient is simply given by :
\begin{equation}
\nabla E(h) = \left(   \begin{array}{cc}     \int_{W_1(h)}d\mu(x)-\frac{1}{n} \\	\vdots\\   \int_{W_n(h)}d\mu(x)-   \frac{1}{n} \\   \end{array}   \right).
\end{equation}
Moreover, for the expression of the Hessian of $E$, let us define the intersection faces for $1\leq i,j\leq n$ :
\begin{equation*}
\left\{
\begin{array}{ll}
F_{ij} = W_i(h) \cap W_j(h) \cap \Omega & \text{   if the codimension of $F_{ij}$ is 1}  \\
F_{ij} = \emptyset & \text{   otherwise}  
\end{array}
\right.
\end{equation*}

Then, if $dA$ denote the area form on $F_{ij}$, the Hessian of $E$ is given by
\begin{equation*}
\left\{ \begin{array}{ll}
\frac{\partial^2 E}{\partial h_i \partial h_j} = - \frac{1}{\|x_i-x_j\|} \int_{F_{ij}} dA & \text{  if $i\ne j$}\\
\frac{\partial^2 E}{\partial h_i \partial h_i} = \sum_{1\leq j\leq n, j\ne i} \frac{1}{\|x_i-x_k\|} \int_{F_{ij}} dA 
\end{array}
\right..
\end{equation*}
We can perform the computation of the solution $h^*$ of the minimization problem by Newton's method.\\
If $\Omega=[0;1]^d$, in order to initialize this algorithm with $h^{(0)}(x_1,...,x_n)\in H_0^{(n)}$, we consider the vector corresponding to the translation/scaling of the classical Voronoi cells into $[0;1]^d$ i.e. :
\begin{equation*}
h^{(0)}(x_1,...,x_n) = \frac{1}{4m_n} \myvec {\frac{1}{n}\sum_{i=1}^n |x_i|^2 - |x_1|^2} {\frac{1}{n}\sum_{i=1}^n |x_i|^2 - |x_n|^2}
\end{equation*}
with $m_n$ the largest coordinate absolute value among the sample $x_1,..,x_n$.

\begin{prop}
If $\Omega=[0;1]^d$ and the $x_i$'s are distinct :
\begin{equation*}
h^{(0)}(x_1,...,x_n)\in H_0^{(n)}.
\end{equation*}
\label{init_opt}
\end{prop}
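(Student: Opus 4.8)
The statement to prove, Proposition~\ref{init_opt}, asks for two things about $h^{(0)} := h^{(0)}(x_1,\dots,x_n)$: that its coordinates sum to zero, and that for every $i$ the cell $W_i(h^{(0)}) = \{u\in[0;1]^d : \nabla\phi_{h^{(0)}}(u) = x_i\}$ is nonempty. The first is immediate: summing the $i$-th coordinate $\frac{1}{4m_n}\bigl(\frac1n\sum_{j}|x_j|^2 - |x_i|^2\bigr)$ over $i$, the two copies of $\sum_j|x_j|^2$ cancel, so $\sum_i h_i^{(0)} = 0$. (Note $m_n>0$ since the $x_i$ are distinct, so $h^{(0)}$ is well defined once $n\ge 2$.)

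For the second and main part, the plan is to exhibit, for each $i$, an explicit point of $[0;1]^d$ lying in $W_i(h^{(0)})$. The candidate is dictated by the design of $h^{(0)}$: up to an additive constant and the positive scaling factor $1/(4m_n)$, the coordinate $h^{(0)}_i$ is $-\tfrac12|x_i|^2$, which by Lemma~\ref{power_lemma} makes the natural subdivision $\{W_i(h^{(0)})\}$ a rescaled copy of the ordinary Voronoi diagram of the sites $x_i$; the factor $2m_n$ is exactly what shrinks these sites into the unit cube and the constant part recenters them. Concretely, set $\tilde x_i := \frac{x_i}{2m_n} + \bigl(\tfrac12,\dots,\tfrac12\bigr)$. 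Since $m_n$ is the largest absolute value of a coordinate among $x_1,\dots,x_n$, every coordinate of $\frac{x_i}{2m_n}$ lies in $[-\tfrac12,\tfrac12]$, hence $\tilde x_i\in[0;1]^d=\Omega$.

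It then remains to check that $\nabla\phi_{h^{(0)}}(\tilde x_i) = x_i$, i.e. that $\tilde x_i\!\cdot\! x_i + h^{(0)}_i \ge \tilde x_i\!\cdot\! x_j + h^{(0)}_j$ for all $j$. Substituting the definitions of $\tilde x_i$ and $h^{(0)}$, the recentering shift $(\tfrac12,\dots,\tfrac12)$ and the common additive constant drop out of the inequality, and what remains rearranges to the manifestly true $|x_i - x_j|^2 \ge 0$; equivalently, $\tilde x_i$ lies in the Voronoi cell of $\tilde x_i$ among the $\tilde x_j$, which is nonempty precisely because the $x_i$ — hence the $\tilde x_i$ — are pairwise distinct. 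Transporting this back, $\tilde x_i\in W_i(h^{(0)})$, so $W_i(h^{(0)})\neq\emptyset$ for every $i$, and together with $\sum_i h^{(0)}_i=0$ this gives $h^{(0)}\in H_0^{(n)}$.

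I expect the only delicate point to be the affine bookkeeping in the previous paragraph: tracking how the scaling by $2m_n$ and the recentering by $(\tfrac12,\dots,\tfrac12)$ interact with the max-of-affine-functions potential $\phi_{h^{(0)}}$, and being careful that the rescaled sites land inside $[0;1]^d$ itself and not merely inside some translate of the cube (so that the additive constant in $h^{(0)}$ must be tuned accordingly). Once the natural subdivision is correctly identified with the Voronoi diagram of points genuinely contained in $\Omega=[0;1]^d$, the conclusion — each site belongs to its own Voronoi cell — is immediate, and convexity of $\Omega$ is used only implicitly through the fact that this cube-restricted Voronoi diagram still partitions $\Omega$ into $n$ cells.
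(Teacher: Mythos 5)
The sum-to-zero half of your argument is fine, but the main step contains a genuine error: the cancellation you invoke does not occur. Writing $u_c=(1/2,\dots,1/2)^T$ and $\tilde x_i=\frac{x_i}{2m_n}+u_c$, a direct computation with the formula for $h^{(0)}$ as displayed in the text gives
\begin{equation*}
\tilde x_i\cdot x_i + h^{(0)}_i-\tilde x_i\cdot x_j - h^{(0)}_j \;=\; \frac{\|x_i-x_j\|^2}{4m_n}\;+\;u_c\cdot(x_i-x_j),
\end{equation*}
so the recentering shift does \emph{not} drop out: it leaves the residual term $u_c\cdot(x_i-x_j)=\frac12\sum_k(x_{i,k}-x_{j,k})$, which can be negative and dominate the quadratic term. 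Concretely, for $d=1$, $x_1=-1$, $x_2=-1/2$, one has $m_n=1$ and $h^{(0)}=(-3/32,\,3/32)$, and $-u-3/32<-u/2+3/32$ for every $u\in[0;1]$, so that $W_1(h^{(0)})=\emptyset$: not only does your candidate $\tilde x_1=0$ fail the inequality, the statement itself fails for the displayed $h^{(0)}$, so no choice of witness point can rescue the argument. The structural reason is that $h^{(0)}_i=-\frac{\|x_i\|^2}{4m_n}+\mathrm{const}$ makes $W_i(h^{(0)})$ the Voronoi cell of the scaled but \emph{uncentered} site $x_i/(2m_n)\in[-1/2;1/2]^d$ intersected with $[0;1]^d$, and such a cell may miss the unit cube entirely.

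The paper's own proof quietly works with a different vector: it sets $h_\Omega=\frac{1}{2m_n}h_V-\left(\langle x_1,u_c\rangle,\dots,\langle x_n,u_c\rangle\right)^T$ with $h_{V,i}=-\frac12\|x_i\|^2$, i.e.\ it includes an extra recentering term $-\langle x_i,u_c\rangle$ that is absent from the displayed formula for $h^{(0)}$, and only then subtracts the mean to land in $H_0^{(n)}$. With that corrected choice, $h^{(0)}_i=\frac{1}{4m_n}\bigl(\frac1n\sum_j\|x_j\|^2-\|x_i\|^2\bigr)-\langle x_i,u_c\rangle+\frac1n\sum_j\langle x_j,u_c\rangle$, your computation does close: the term $u_c\cdot(x_i-x_j)$ cancels against $\langle x_j,u_c\rangle-\langle x_i,u_c\rangle$, leaving $\frac{\|x_i-x_j\|^2}{4m_n}>0$ for $j\neq i$, hence $\tilde x_i\in W_i(h^{(0)})$ with $\tilde x_i\in[0;1]^d$. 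In that corrected form your argument is exactly the paper's (rescale and recenter the Voronoi diagram so that each site, hence each cell, lands in the unit cube). As written, however, the key inequality is false, and you should either carry the $-\langle x_i,u_c\rangle$ term through or flag the discrepancy with the displayed definition of $h^{(0)}$.
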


\begin{proof}
Let us define the hypercube englobing all the samples $X_n = [-m_n;m_n]\times \dots\times [-m_n;m_n]$.
Let $V_1,..., V_n$ the Voronoi cells intersected with $X_n$. So with probability 1 :
\begin{equation*}
V_i = \{x\in X_n \text{ s.t. } |x-x_i|\leq |x-x_j|\ \ \forall j\ne i \}\ne \emptyset
\end{equation*}
It is clear that if $h_V = \myvec{-\frac{1}{2}|x_1|^2}{-\frac{1}{2}|x_n|^2}$, we have
\begin{equation*}
V_i = \{y\in X_n \text{ s.t. } \nabla \phi_{h_V}(y) = x_i\}
\end{equation*}
Let us note $u_c = \myvec{1/2}{1/2}$. Then, if $h_{\Omega} = \frac{1}{2m_n}h_V - \myvec{\langle x_1,u_c\rangle}{\langle x_n,u_c\rangle}$ and $u\in\Omega=[0;1]^d$ :
\begin{align*}
\phi_{h_{\Omega}}(u) &= \max_{1\leq i \leq n} \left\{ \langle u,x_i\rangle +h_{\Omega,i} \right\} \\
&= \frac{1}{2m_n} \max_{1\leq i \leq n}    \left\{   2m_n \langle \left(u- u_c \right),x_i\rangle + 2m_n \left(h_{\Omega,i}+ \langle x_i, u_c\rangle\right) \right\}\\
&= \frac{1}{2m_n} \phi_{h_V} \left(2m_n \left(u-u_c\right) \right) 
\end{align*}
So for any $1\leq i \leq N,\ W_i(h_{\Omega}) = 2m_n\left(V_i- u_c\right) \ne \emptyset$.\\
We end this proof by taking as initialization vector $h^{(0)}(x_1,...,x_n)= h_{\Omega} - \frac{1}{n}\sum_{i=1}^n h_{\Omega,i}$.
\end{proof}

\begin{remark}
If $\Omega=\mathbb{R}^d$, this initialization is not an issue since it suffices to take the vector $h$ corresponding to the Voronoi cells.
\end{remark}

\begin{algorithm}
\caption{Computation of the discrete optimal transport via Newton's method}
\begin{algorithmic}
\State $\textbf{Aim}$ : To compute the discrete subdivision of $\Omega$ designing the optimal transport between $\nu_n$ and $\mu$
\State $\textbf{Input}$ : $h_0\in H_0^{(n)}$, a descent step $\gamma$, a tolerance $\eta$
\State $\textbf{while  } |\nabla E(h_t)| >\eta$
\State  \hspace{0.5cm}$h_{t+1} = h_t-\gamma (\nabla^2 E(h_t))^{-1}\nabla E(h_t)$
\State  \hspace{0.5cm}$t\leftarrow t+1$
\State $\textbf{end}$
\end{algorithmic}
\label{algo_optimal}
\end{algorithm}

In practice, the Hessian $\nabla^2 E$ is often hard to compute since it requires the calculation of the area of the facets of a power diagram. In our implementation, we prefer to use the simpler gradient descent in Algorithm \ref{algo_optimal} : 
\begin{equation*}
h_{t+1} = h_t -\gamma \nabla E(h_t).
\end{equation*}
Moreover, in order to compute the gradient of $E$ for an arbitrary measure $\mu$, we use a Monte-Carlo method.\\
However, since $E$ is strictly convex only in $H_0^{(n)}$, Algorithm \ref{algo_optimal} may not converge to $h^*$ especially when $n$ is large. An improvement of this algorithm that would perform a gradient descent on the set $H_0^{(n)}$ is left as perspective.

\begin{figure}
  \centering
   \subfloat[Voronoi cells of the sample]{  \includegraphics[width=5in]{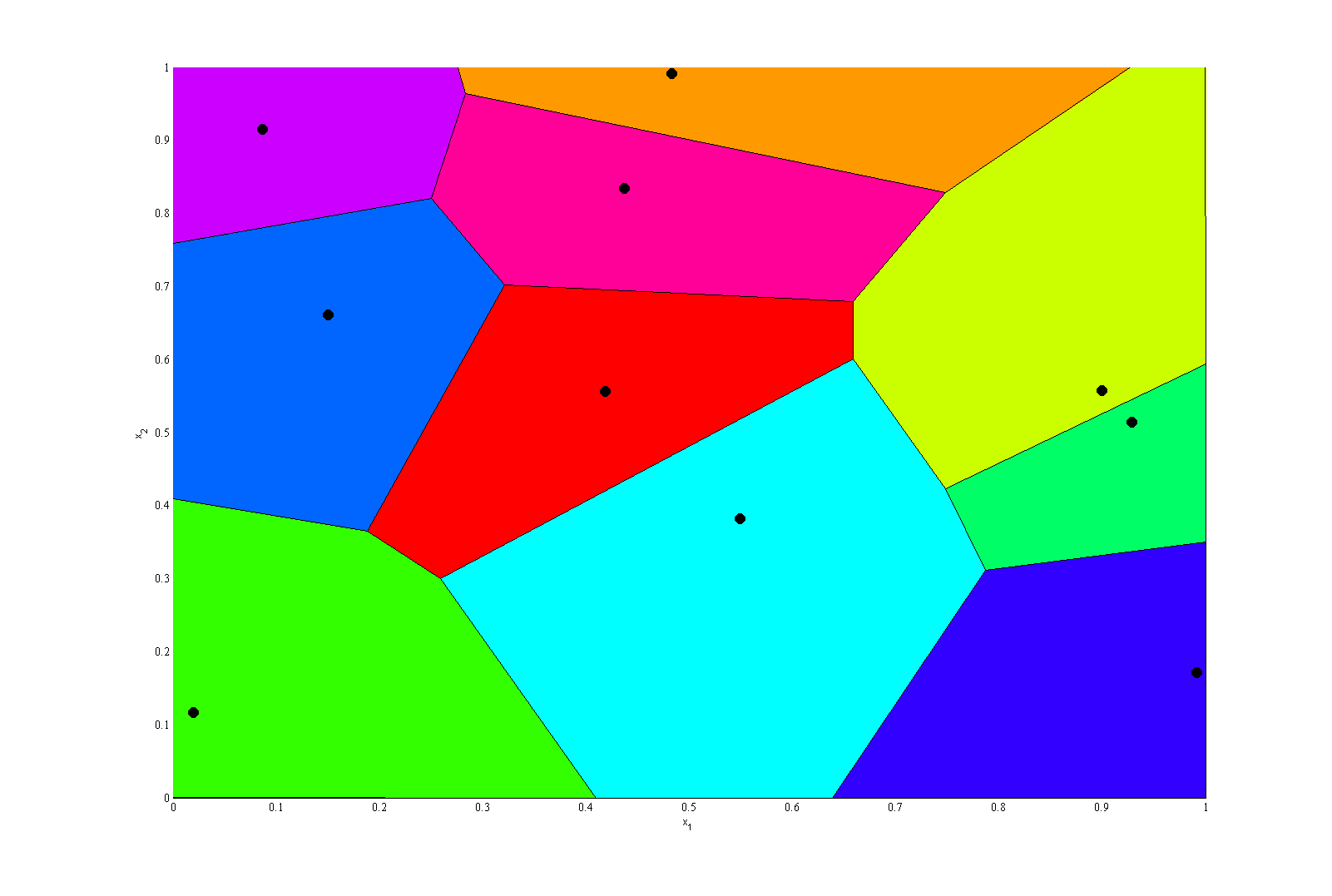}}\\
     \subfloat[Potential function of the optimal transport]{ \includegraphics[width=3in]{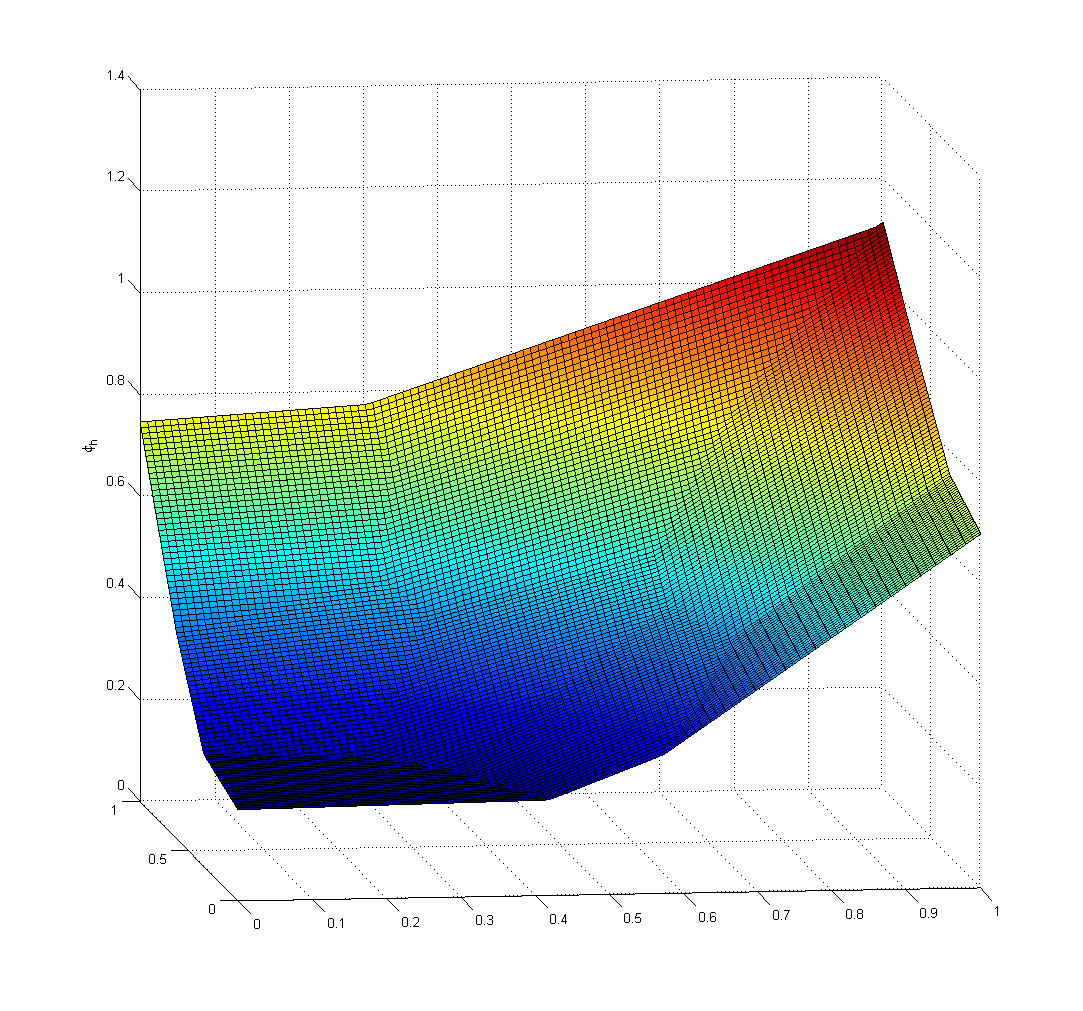}}
     \subfloat[Power diagram corresponding to the optimal transport (the transport maps each cell into one sample i.e. is piecewise linear)]{ \includegraphics[width=3in,height=2.5in]{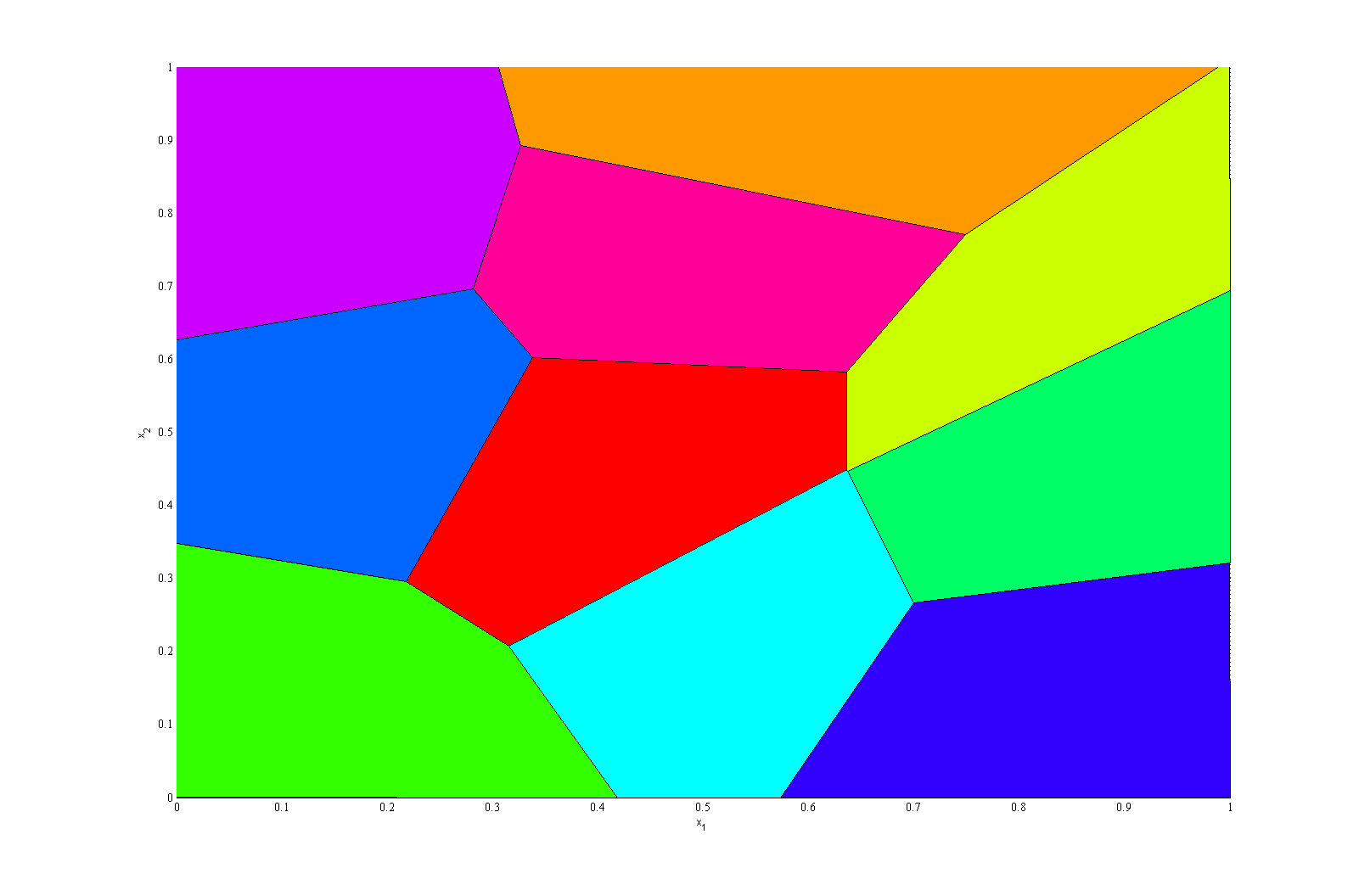}}
   \caption{Optimal transport of the discrete empirical distribution of a sample of size $10$ into the uniform distribution on $[0;1]^2$}
\end{figure}

\begin{figure}
  \centering
   \subfloat[Voronoi cells of the sample]{  \includegraphics[width=5in]{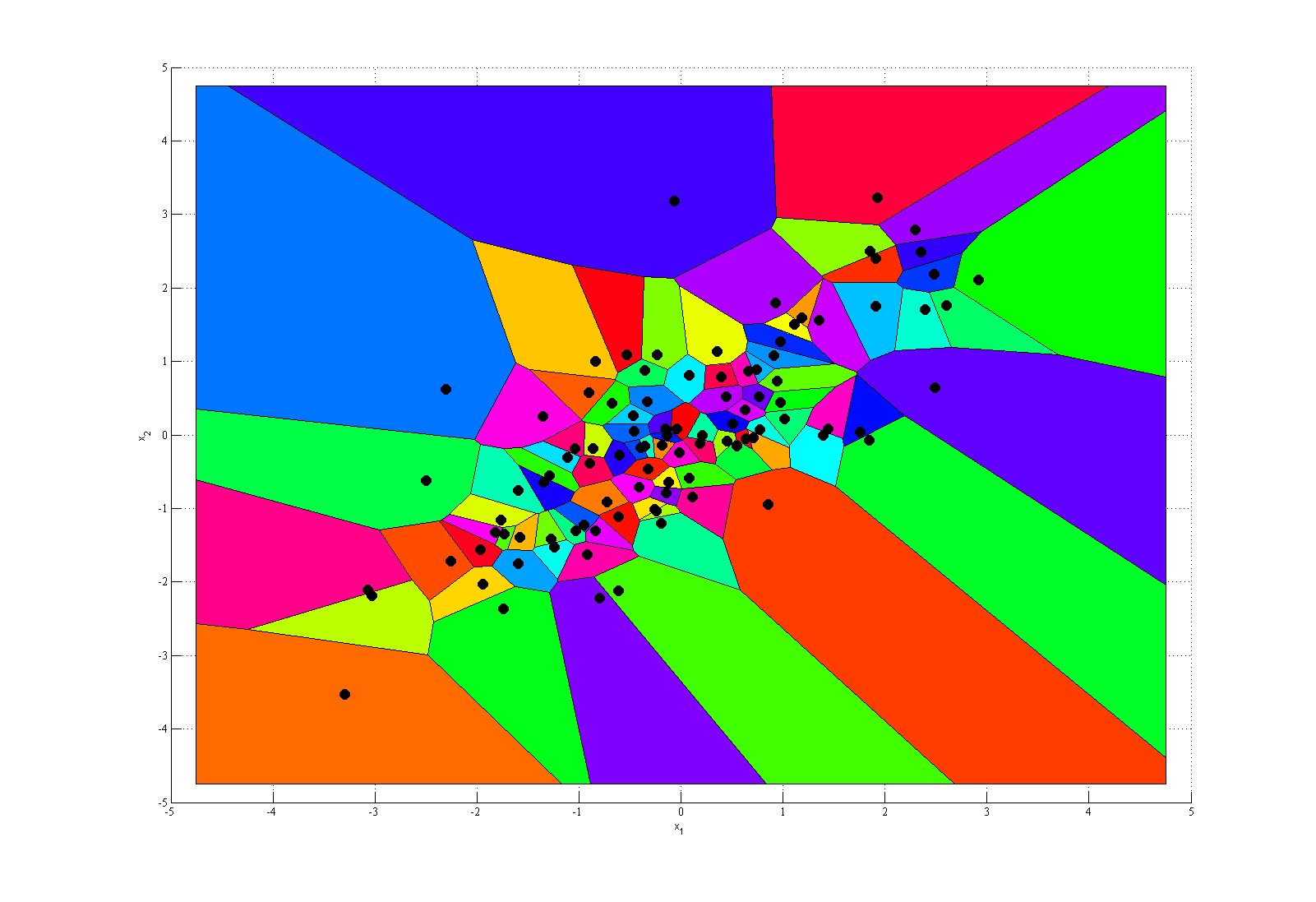}}\\
     \subfloat[Potential function of the optimal transport]{ \includegraphics[width=3in]{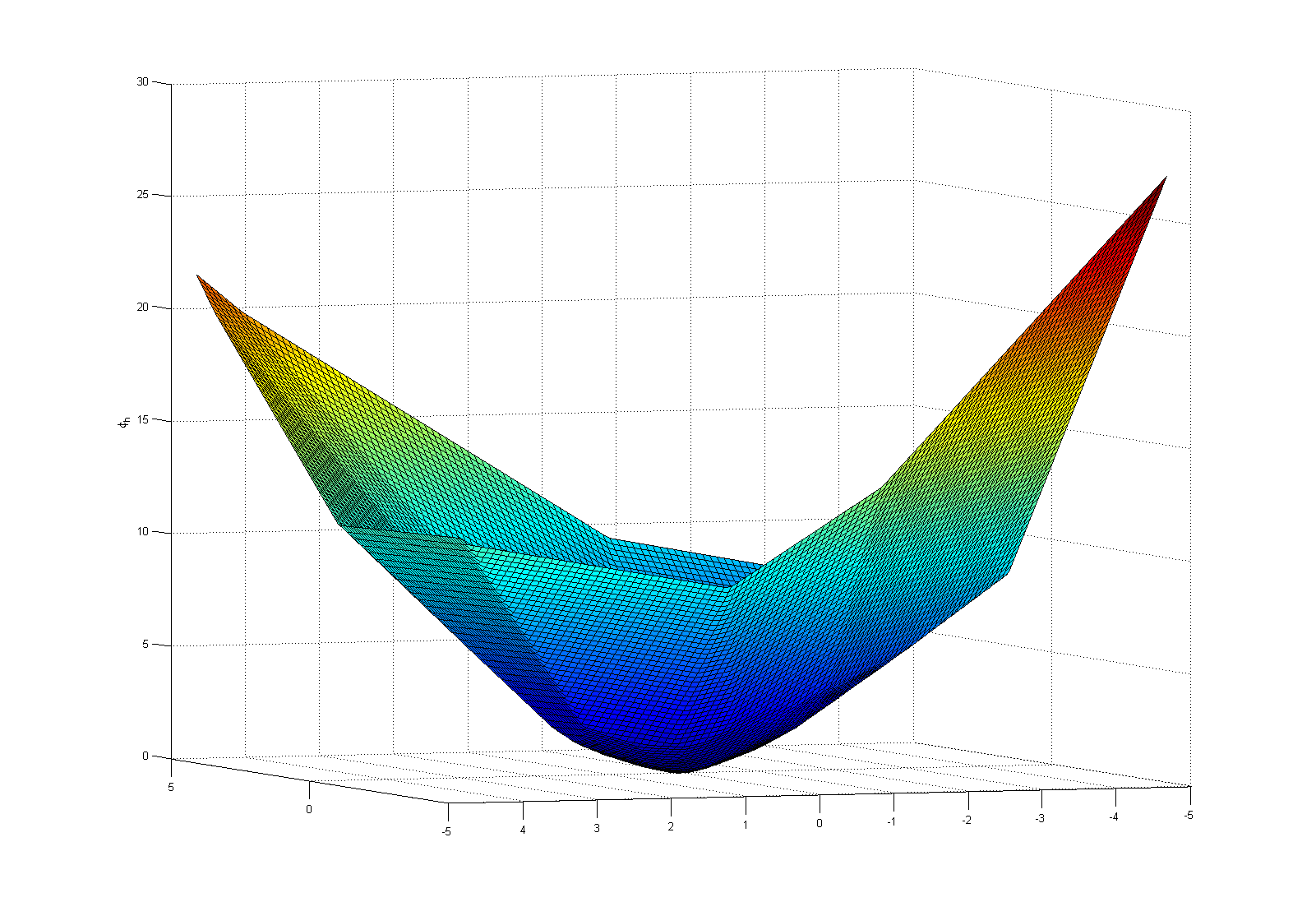}}
     \subfloat[Power diagram corresponding to the optimal transport (the transport maps each cell into one sample i.e. is piecewise linear)]{ \includegraphics[width=3in,height=2.5in]{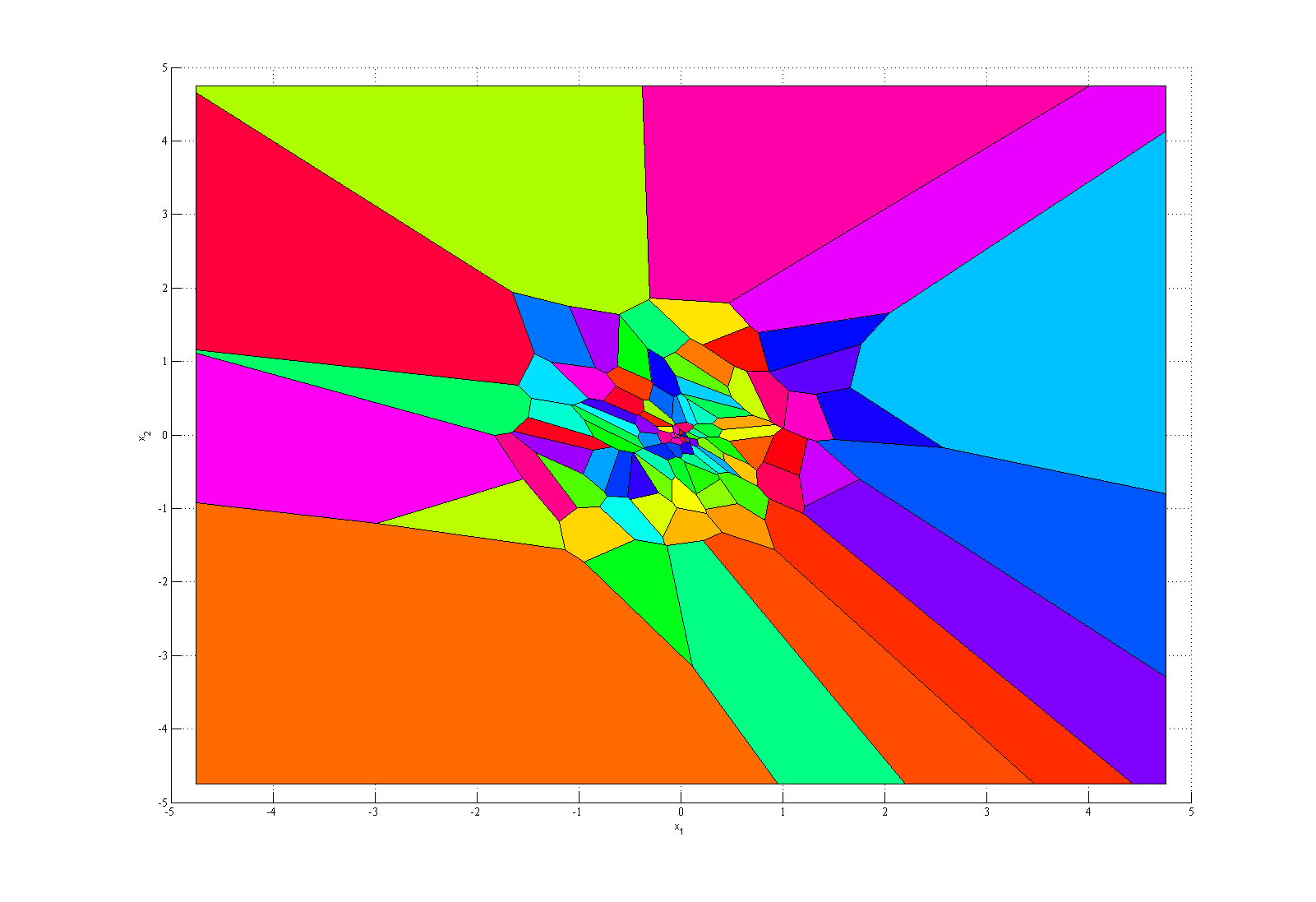}}
   \caption[Optimal transport for a sample of size 100 drawn from a Gaussian distribution into the standard Gaussian]{Discrete optimal transport for a sample of size $100$ drawn from a Gaussian distribution with covariance $\left(\begin{array}{cc}1 &0.8\\0.8&1\end{array}\right)$ into the standard Gaussian}
\end{figure}

\subsubsection{Explicit expression for 2 samples}
As an illustration, we can explicitly compute the monotone transport and some associated L-moments for 2 samples with a source distribution equal to the uniform on $[0;1]^d$ or to the standard normal $\mathcal{N}_d(0,I_d)$. Let $x_1, x_2\in\mathbb{R}^d$ two samples coming from the same distribution.\\
Let us first begin with the standard normal distribution as source measure. As the potential $\phi_h$ of the previous section is defined up to an additive constant, we consider for $h\in\mathbb{R}^d$ : 
\begin{equation*}
\phi_h(u) = \max(u.x_1,u.x_2+h)
\end{equation*}
$\nabla\phi_h$ is the discrete optimal transport if $W_i=\{y\in\mathbb{R^d} \text{ s.t. } \nabla\phi_h(y) = x_i\}$ for $i=1,2$ have a measure equal to 1/2 for the normal measure. By symmetry, we can assert that this property is attained for $h=0$. The transport is then for $y\in\mathbb{R^d}$ :
\begin{equation}
T_{\mathcal{N}}(y) = \nabla\phi_0(y) = \left\{
\begin{array}{l}
x_1 \text{      if $y.(x_1-x_2)\geq 0$}\\
x_2 \text{      if $y.(x_1-x_2)\leq 0$}
\end{array}
\right.
\end{equation}
The L-moments of degree 2 associated with this transport are then (for the sake of simplicity, we compute only the L-moments related to the first coordinate) : 
\begin{align*}
\lambda_{2,1,...,1}(x_1,x_2) &= \int_{\mathbb{R}^d} \nabla\phi_0(y) L_2(\mathcal{N}(y_1))d\mathcal{N}_d(y)\\
&= (x_1-x_2)\int_{y.(x_1-x_2)\geq 0} L_2(\mathcal{N}(y_1))d\mathcal{N}_d(y).
\end{align*}
Let us denote by $(e_1,..,e_d)$ the canonical basis of $\mathbb{R}^d$. Then, if $e_1$ and $x_1-x_2$ are collinear, then 
\begin{equation*}
\int_{y.(x_1-x_2)\geq 0} L_2(\mathcal{N}(y_1))d\mathcal{N}_d(y)=\pm1/4
\end{equation*}
depending on the sign of $y.(x_1-x_2)$. If it is not the case, we can build an orthonormal basis $(f_1=e_1,f_2,...,f_d)$ by completing the basis of the plan formed by $e_1$ and $x_1-x_2$. Let us denote by $U$ the rotation matrix transforming the canonical basis into the second one.\\
We also define $a_1= (x_1-x_2).f_1= (x_1-x_2).e_1$ and $a_2= (x_1-x_2).f_2$. Then, the integral becomes
\begin{align*}
\lambda_{2,1,...,1}(x_1,x_2) &= (x_1-x_2)\int_{a_1z_1+a_2z_2\geq 0} L_2(\mathcal{N}(z_1))d\mathcal{N}_d(z)\\
&= (x_1-x_2) \int_{\mathbb{R}} L_2(\mathcal{N}(z_1))\mathcal{N}(-\frac{a_1}{|a_2|}z_1)d\mathcal{N}(z_1)\\
&= \frac{x_1-x_2}{\pi} \arctan\left(\frac{a_1/|a_2|}{\sqrt{(a_1/|a_2|)^2 + 2}}\right)
\end{align*}
This last equality is obtained by deriving the function $t\mapsto \int L_2(\mathcal{N}(z_1))\mathcal{N}(tz_1)d\mathcal{N}(z_1)$ and is still valid for $a_2=0$ which corresponds to the case of collinearity of $e_1$ and $x_1-x_2$.\\

In the following, we will note the central point of the unit square $u_c=\myvec{1/2}{1/2}$. The same calculus can be performed when the source measure is uniform on the unit square. The transport is then by the same argument of symmetry for $u\in[0;1]^d$ : 
\begin{equation*}
T_{unif}(u) = \left\{
\begin{array}{l}
x_1 \text{      if $\left(u-u_c\right).(x_1-x_2)\geq 0$}\\
x_2 \text{      if $\left(u-u_c\right).(x_1-x_2)\leq 0$}
\end{array}
\right..
\end{equation*}
Performing the same kind of change of coordinate with a translation of $u_c$, the L-moments of order $(r,1,...,1)$  with $r\geq 2$ are :
\begin{align*}
\lambda_{r,1,...,1}(x_1,x_2) &= (x_1-x_2) \int_{\left(u-u_c\right).(x_1-x_2)\geq 0} L_r(u_1)du\\
&= (x_1-x_2)\int_{a_1v_1+a_2v_2\geq 0; |v_1|,|v_2|\leq 1/2} L_r(v_1+1/2)dv_1dv_2\\
&= \left\{ \begin{array}{ll}
\text{sgn}(a_1)K_r(\frac{1}{2}) & \text{     if $a_2=0$} \\
\frac{a_1}{6|a_2|} & \text{     if $\frac{|a_2|}{|a_1|}\geq 1$} \\
\frac{a_2}{2|a_2|}(1+\frac{a_1}{|a_1|}) \left[K_r(\frac{1}{2}(1+\left|\frac{a_2}{a_1}\right|)) - K_r(\frac{1}{2}(1-\left|\frac{a_2}{a_1}\right|)) \right] \\
- \frac{a_1}{|a_2|}\left[J_r(\frac{1}{2}(1+\left|\frac{a_2}{a_1}\right|)) - J_r(\frac{1}{2}(1-\left|\frac{a_2}{a_1}\right|)) \right]& \text{ otherwise} \\
\end{array}\right.
\end{align*}
where $K_r$ and $J_r$ are successive primitive functions of $L_r$.

\subsubsection{Consistency of the optimal transport estimator}

Let $X_1,..., X_n$ be $n$ independent copies of a vector $X$ in $\mathbb{R}^d$ with distribution $\nu$. Let $\mu$ denote a reference measure on a convex set $\Omega\subset\mathbb{R}^d$ so that $\mu$ gives no mass to small sets. Let $\nu_n$ be the empirical measure pertaining to the sample.\\
We define two transports, say $T$ and $T_n$ expressed as the gradient of two convex functions, say $\varphi$ and $\varphi_n$, so that $T=\nabla\varphi$ and $T_n=\nabla\varphi_n$.\\
$T$ and $T_n$ respectively transport $\mu$ onto $\nu$ and $\nu_n$. We do not assume that the hypothesis in Theorem \ref{gangbo} holds for $\mu$. Hence neither $T$ nor $T_n$ can be defined as an optimal transport for a quadratic cost; $T$ and $T_n$ are merely monotone transports.\\
This section is devoted to the statement of the convergence of $T_n$ to $T$.

\begin{definition}
A set $S\subset\mathbb{R}^d\times \mathbb{R}^d$ is said to be cyclically monotone if for any finite number of points $(x_i,y_i)\in S$, i=1...n
\begin{equation}
\langle y_1,x_2-x_1\rangle + \langle y_2,x_3-x_2\rangle + \dots \langle y_n,x_1-x_n\rangle \leq 0
\end{equation}
By extension, we say that a function $f$ is cyclically monotone if all subsets of the form 
\begin{equation*}
S = \left\{(x_1,f(x_1)),...,(x_n,f(x_n))\right\}
\end{equation*}
are cyclically monotone.
\end{definition}

Before stating consistency results, let us first begin with a lemma. 
\begin{lemma}
Let K be the space defined by 
\begin{equation}
K = \{\nabla \varphi\in L_1(\Omega,\mathbb{R}^d,\mu) \text{ , } \varphi \text{ convex $\mu$-a.e.}\}
\end{equation}
Then K is a Hilbert space for the norm :
\begin{equation}
\|\nabla\varphi\|_1 = \int_{\Omega} \|\nabla\varphi(x)\| d\mu(x)
\end{equation}
\end{lemma}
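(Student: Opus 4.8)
The strategy is to realise $K$ as a closed subset of the Banach space $L^1(\Omega,\mathbb{R}^d,\mu)$; since a closed subset of a complete metric space is complete, $(K,\|\cdot\|_1)$ is then complete. (Note that $K$ is a convex cone rather than a linear subspace: for convex $\varphi,\psi$ and $s,t\ge 0$ the function $s\varphi+t\psi$ is convex, so $s\nabla\varphi+t\nabla\psi\in K$; moreover $\|\cdot\|_1$ is finite on $K$ because $\mu$ has finite first moment.) We read ``$\varphi$ convex $\mu$-a.e.'' as follows: an element of $K$ is the $\mu$-class of a map $g:\Omega\to\mathbb{R}^d$ such that $g=\nabla\varphi$ at each differentiability point of some convex function $\varphi$ defined on a convex open neighbourhood of $\Omega$; since the non-differentiability set of a finite convex function has Hausdorff dimension at most $d-1$, it is $\mu$-negligible (the standing assumption on $\mu$), so $g$ is well defined $\mu$-a.e.

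Granting that $L^1(\Omega,\mathbb{R}^d,\mu)$ is complete (a classical fact), it remains to prove that $K$ is closed. Let $g_n=\nabla\varphi_n\in K$ with $g_n\to g$ in $L^1(\mu)$ for some $g\in L^1(\Omega,\mathbb{R}^d,\mu)$; passing to a subsequence, we may assume $g_n\to g$ $\mu$-a.e. Let $N$ be a $\mu$-null set off which $g_n\to g$ pointwise, let $D_n$ be the non-differentiability set of $\varphi_n$, and put $D=N\cup\bigcup_n D_n$, which is still $\mu$-null. On $\Omega\setminus D$ each $g_n$ is a genuine gradient and converges pointwise to $g$. The graph of $\nabla\varphi_n$ is cyclically monotone, so for every integer $k$ and all $x_1,\dots,x_k\in\Omega\setminus D$,
\begin{equation*}
\langle g_n(x_1),x_2-x_1\rangle+\dots+\langle g_n(x_k),x_1-x_k\rangle\le 0,
\end{equation*}
and letting $n\to\infty$ yields the same inequality with $g$ in place of $g_n$. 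Hence the restriction of $g$ to $\Omega\setminus D$ is cyclically monotone, and by Rockafellar's theorem there exists a proper lower semicontinuous convex function $\varphi$ with $g(x)\in\partial\varphi(x)$ for all $x\in\Omega\setminus D$. Such a $\varphi$ is differentiable off a set of Hausdorff dimension at most $d-1$, which is $\mu$-null, so $g=\nabla\varphi$ $\mu$-a.e.; therefore $g\in K$ and $K$ is closed.

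The main difficulty is the passage to the limit for cyclical monotonicity together with the control of the exceptional sets: one must arrange that the limit inequalities hold on a genuine subset of $\Omega$ (not merely $\mu$-almost everywhere) so that Rockafellar's characterisation of cyclically monotone sets applies, and then one must know that the convex potential produced is differentiable $\mu$-almost everywhere with gradient equal to $g$ there. Both points rely on the hypothesis that $\mu$ charges no set of Hausdorff dimension $\le d-1$; granting it, the rest is a routine assembly of the completeness of $L^1$, Rockafellar's subdifferential theorem, and the a.e.\ differentiability of convex functions. A minor technical care is needed to check that the domain of the limiting $\varphi$ contains the interior of $\Omega$, which follows since $\Omega\setminus D$ is dense in the convex set $\Omega$.
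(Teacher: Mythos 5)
Your proposal is correct and follows essentially the same route as the paper: completeness of $K$ is reduced to closedness in $L^1(\Omega,\mathbb{R}^d,\mu)$, cyclical monotonicity is passed to the limit, Rockafellar's theorem produces a convex potential, and the hypothesis that $\mu$ charges no small sets gives $\mu$-a.e.\ differentiability. Your version is in fact slightly more careful than the paper's: you extract an a.e.-convergent subsequence (the paper lets $n\to\infty$ in the pointwise inequalities directly, which $L^1$ convergence alone does not justify), you track the exceptional null sets explicitly, and you correctly treat $L^1$ as a Banach rather than a Hilbert space.
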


\begin{proof}
As $L_1(\Omega,\mathbb{R}^d,\mu)$ is a Hilbert space, it is sufficient to prove that $K$ is closed in $L_1(\Omega,\mathbb{R}^d,\mu)$. Let $\nabla\varphi_n$ be a sequence in $K$ convergent to $T\in L_1(\Omega,\mathbb{R}^d,\mu)$. $\nabla\varphi_n$ is cyclically monotone i.e. for all $m\in\mathbb{N}$ and $x_0, x_1, ..., x_m\in\Omega$ :
\begin{equation*}
(x_1-x_0).\nabla\varphi_n(x_0) +(x_2-x_1).\nabla\varphi_n(x_1) + \dots + (x_0-x_m).\nabla\varphi_n(x_m) \leq 0
\end{equation*}
Let $n\rightarrow\infty$ then :
\begin{equation*}
(x_1-x_0).T(x_0) +(x_2-x_1).T(x_1) + \dots + (x_0-x_m).T(x_m) \leq 0
\end{equation*}
i.e. $T$ is cyclically monotone. Furthermore, Theorem 24.8 of Rockafellar asserts that there exists a convex potential $\varphi$ whose subgradient is cyclically monotone \cite{rockafellar70}. As $\mu$ gives no mass to small sets, $\varphi$ is $\mu$-almost everywhere differentiable (see \cite{anderson52}) and $\nabla\varphi=T\in K$.
\end{proof}

\begin{lemma}(Lemma 9 McCann \cite{mccann95})\\
Let a sequence of probability measure on $\mathbb{R}^d\times \mathbb{R}^d$, denoted by $\gamma_n$, converge to $\gamma$ in the sense that for any test function $h\in C^{\infty}(\mathbb{R}^d\times \mathbb{R}^d)$ with compact support
\begin{equation*}
\int h(x,y)d\gamma_n(x,y) \rightarrow \int h(x,y)d\gamma_(x,y).
\end{equation*}
Let us call the marginals of $\gamma$ the respective measures $\mu$ and $\nu$ defined on $\mathbb{R}^d$ such that for any Borel set $M$ of $\mathbb{R}^d$
\begin{eqnarray*}
\mu(M)&=& \gamma(M\times \mathbb{R}^d)\\
\nu(M) &=& \gamma(\mathbb{R}^d \times M)
\end{eqnarray*}
Then
\begin{itemize}
\item $\gamma$ has a cyclically monotone support if $\gamma_n$ does for each n
\item if the marginals of $\gamma_n$, denoted by $\mu_n$ and $\nu_n$ converge in the sense given above to $\mu$ and $\nu$, then $\mu$ and $\nu$ are the respective marginals of $\gamma$
\end{itemize}
\label{lemma_mccann}
\end{lemma}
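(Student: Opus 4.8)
The plan is to prove the two assertions separately: the first by an approximation argument at the level of the supports, the second by first upgrading the assumed vague convergence to narrow convergence.

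\textbf{First assertion (cyclic monotonicity of the support).} The idea is that every point of $\mathrm{supp}(\gamma)$ is a limit of points of the supports of the $\gamma_n$. Fix a finite family $(x_1,y_1),\dots,(x_k,y_k)\in\mathrm{supp}(\gamma)$. For each integer $m\ge 1$ and each $i$, let $B_i^{(m)}$ be the open ball of radius $1/m$ about $(x_i,y_i)$, so that $\gamma(B_i^{(m)})>0$. Since $B_i^{(m)}$ is open, $\liminf_n\gamma_n(B_i^{(m)})\ge\gamma(B_i^{(m)})>0$ (this one-sided bound for vague limits follows by testing against a continuous function supported in $B_i^{(m)}$ that equals $1$ on a compact subset of positive $\gamma$-mass). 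Hence there is an increasing sequence of indices $(n_m)$ with $\gamma_{n_m}(B_i^{(m)})>0$ for all $i\le k$, and we may pick $(x_i^{(m)},y_i^{(m)})\in\mathrm{supp}(\gamma_{n_m})\cap B_i^{(m)}$. Cyclic monotonicity of $\mathrm{supp}(\gamma_{n_m})$ yields
\begin{equation*}
\langle y_1^{(m)},x_2^{(m)}-x_1^{(m)}\rangle+\langle y_2^{(m)},x_3^{(m)}-x_2^{(m)}\rangle+\dots+\langle y_k^{(m)},x_1^{(m)}-x_k^{(m)}\rangle\le 0 .
\end{equation*}
Letting $m\to\infty$, so that $(x_i^{(m)},y_i^{(m)})\to(x_i,y_i)$ for each $i$, and using that the left-hand side depends continuously on these points, we obtain the same inequality for the $(x_i,y_i)$; thus $\mathrm{supp}(\gamma)$ is cyclically monotone.

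\textbf{Second assertion (the marginals pass to the limit).} The subtlety is that $\gamma_n$ cannot be tested directly against $f(x)$ for $f\in C_c(\mathbb{R}^d)$, since $f(x)$ is not compactly supported on $\mathbb{R}^d\times\mathbb{R}^d$; the remedy is to rule out escape of mass to infinity, which uses that the limits $\mu,\nu$ are probability measures (as they are in the intended applications). Because $\mu_n$ and $\mu$ are probability measures and $\mu_n\to\mu$ vaguely, the convergence of the marginals is in fact narrow, so $\{\mu_n\}$ is tight by Prokhorov's theorem; similarly $\{\nu_n\}$ is tight. Given $\varepsilon>0$, choose compact sets $K_1,K_2\subset\mathbb{R}^d$ with $\sup_n\mu_n(\mathbb{R}^d\setminus K_1)<\varepsilon/2$ and $\sup_n\nu_n(\mathbb{R}^d\setminus K_2)<\varepsilon/2$; then for every $n$,
\begin{equation*}
\gamma_n\big((\mathbb{R}^d\times\mathbb{R}^d)\setminus(K_1\times K_2)\big)\le\mu_n(\mathbb{R}^d\setminus K_1)+\nu_n(\mathbb{R}^d\setminus K_2)<\varepsilon ,
\end{equation*}
so $\{\gamma_n\}$ is tight. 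A tight, vaguely convergent sequence of probability measures converges narrowly, and the limit $\gamma$ is a probability measure. Push-forward by the continuous projections $\pi_1,\pi_2$ preserves narrow convergence, so $\mu_n=\pi_1\#\gamma_n\to\pi_1\#\gamma$ narrowly; by uniqueness of narrow limits $\pi_1\#\gamma=\mu$, and symmetrically $\pi_2\#\gamma=\nu$.

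I expect the main obstacle to be exactly this passage from vague to narrow convergence. The one-sided bound $\liminf_n\gamma_n(U)\ge\gamma(U)$ on open sets, which drives the first part, is free; but the marginal identity requires genuine two-sided control of the mass, and that is where the hypothesis that $\mu$ and $\nu$ are probability measures is essential — otherwise mass could leak to infinity and the conclusion would fail.
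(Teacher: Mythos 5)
Your proof is correct. Note that the paper itself does not prove this lemma at all: its ``proof'' is a one-line citation to McCann's Lemma 9, so you are supplying the argument the paper outsources. What you wrote is essentially McCann's own proof. Part one (approximating each point of $\mathrm{supp}(\gamma)$ by points of $\mathrm{supp}(\gamma_{n_m})$ via the portmanteau-type bound $\liminf_n\gamma_n(U)\ge\gamma(U)$ on open sets, then passing the cyclic-monotonicity inequality to the limit by continuity) is exactly the standard route, and your handling of the finitely many balls simultaneously is fine. Part two is also sound, and you correctly isolate the one genuine hypothesis the loosely worded statement leaves implicit: that the limits $\mu$ and $\nu$ are probability measures, which is what rules out escape of mass and upgrades vague to narrow convergence via tightness of $\{\mu_n\}$, $\{\nu_n\}$ and hence of $\{\gamma_n\}$ (using $\gamma_n((K_1\times K_2)^c)\le\mu_n(K_1^c)+\nu_n(K_2^c)$). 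This assumption holds in the paper's only application (Theorem \ref{mccann2}, where $\mu$ is fixed and $\nu_n$ is an empirical measure), so your proof covers everything the paper needs.
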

\begin{proof}
It is an application of McCann's  Lemma 9 \cite{mccann95}
\end{proof}

\begin{theorem}
\label{mccann2}
If $\nu$ satisfies $\int \|x\|d\nu(x)<+\infty$, let $T$ and $T_n$ be the monotone transports (i.e. gradients of convex function) of $\mu$ into $\nu$ and $\nu_n$. Then :
\begin{equation}
\|T-T_n\|_1=\int_{\Omega} \|T(x)-T_n(x)\|d\mu(x) \ps 0.
\end{equation}
\end{theorem}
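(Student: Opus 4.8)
The plan is to reduce the almost-sure statement to a deterministic convergence valid on a probability-one event, and then to upgrade an easily-obtained weak convergence of transport plans to the required strong $L^1$ convergence of the maps. First I would fix a sample path in the event on which $\nu_n$ converges to $\nu$ in the Wasserstein-$1$ distance. This event has full probability: the empirical measures $\nu_n$ converge weakly to $\nu$ almost surely (Varadarajan's form of the Glivenko--Cantelli theorem), and since $\int\|x\|\,d\nu(x)<\infty$ the first moments $\int\|x\|\,d\nu_n(x)=\frac1n\sum_{i=1}^n\|X_i\|$ converge to $\int\|x\|\,d\nu(x)$ almost surely by the strong law of large numbers; weak convergence together with convergence of the first moments is equivalent to $W_1(\nu_n,\nu)\to 0$, and in particular yields the uniform integrability $\sup_n\int_{\{\|x\|>M\}}\|x\|\,d\nu_n(x)\to 0$ as $M\to\infty$. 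On such a path $T=\nabla\varphi$ and $T_n=\nabla\varphi_n$ exist and are $\mu$-a.e.\ unique by Proposition \ref{prop_brenier}, since $\mu$ charges no set of Hausdorff dimension at most $d-1$.

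Next I would work with the transport plans $\gamma_n:=(\mathrm{id}\times T_n)\#\mu$ on $\Omega\times\mathbb{R}^d$. Their first marginals all equal $\mu$ and their second marginals $\nu_n$ converge weakly, so both families of marginals are tight and, by Prokhorov's theorem, $\{\gamma_n\}$ is relatively compact for narrow convergence. Let $\gamma$ be a narrow limit along a subsequence. Each $\gamma_n$ is concentrated on the graph of $\nabla\varphi_n$, which is cyclically monotone, so Lemma \ref{lemma_mccann} (McCann's Lemma~9) shows that $\gamma$ has cyclically monotone support and that its marginals are $\mu$ and $\nu$. By Rockafellar's theorem (Theorem 24.8 of \cite{rockafellar70}) there is then a convex function $\psi$ with $\operatorname{supp}(\gamma)\subset\partial\psi$; since $\mu$ gives no mass to small sets, $\psi$ is differentiable $\mu$-a.e.\ (see \cite{anderson52}), so $\gamma=(\mathrm{id}\times\nabla\psi)\#\mu$ with $\nabla\psi\#\mu=\nu$, and the uniqueness in Proposition \ref{prop_brenier} forces $\nabla\psi=T$ $\mu$-a.e. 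Hence every subsequential limit equals $(\mathrm{id}\times T)\#\mu$, so the whole sequence $\gamma_n$ converges narrowly to $(\mathrm{id}\times T)\#\mu$.

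Finally I would pass from plans back to maps. Narrow convergence of $\gamma_n=(\mathrm{id}\times T_n)\#\mu$ to the plan $(\mathrm{id}\times T)\#\mu$, which is carried by a graph, implies $T_n\to T$ in $\mu$-measure: approximating $T$ on a large compact set by a bounded continuous map $\tilde T$ (Lusin's theorem) and testing $\gamma_n$ against $(x,y)\mapsto g(x)\,\phi(\|y-\tilde T(x)\|)$ for suitable bounded continuous $g$ and $\phi$ yields $\mu(\{\|T_n-T\|>\varepsilon\})\to 0$ for every $\varepsilon>0$. Convergence in $\mu$-measure together with the uniform integrability of $\{T_n\}$ in $L^1(\mu;\mathbb{R}^d)$ — transferred from the Wasserstein convergence of $\nu_n$ through $\int_{\{\|T_n\|>M\}}\|T_n\|\,d\mu=\int_{\{\|y\|>M\}}\|y\|\,d\nu_n$ — then gives $\|T-T_n\|_1\to 0$ by Vitali's convergence theorem. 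The main obstacle is precisely this last step: $L^1(\mu)$ does not enjoy the Radon--Riesz property, so weak convergence combined with convergence of the norms $\|T_n\|_1=\int\|y\|\,d\nu_n\to\int\|y\|\,d\nu=\|T\|_1$ does not by itself give strong convergence; one must genuinely use both the uniform integrability produced by the finite-first-moment hypothesis and the fact that the limit plan is supported on a graph, which together prevent the mass of $T_n$ from oscillating or escaping to infinity.
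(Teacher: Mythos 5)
Your proof is correct, and it shares the paper's skeleton --- narrow convergence of the plans $(\mathrm{id}\times T_n)\#\mu$, Lemma \ref{lemma_mccann} to show that any limit plan has cyclically monotone support and marginals $\mu$ and $\nu$, and the uniqueness in Proposition \ref{prop_brenier} to identify that limit with $(\mathrm{id}\times T)\#\mu$ --- but it diverges exactly where the paper's argument is weakest, and your version is the more rigorous one. The paper extracts a subsequence $T_m$ converging \emph{in $L^1(\mu)$-norm} directly from the boundedness of $\|T_n\|_1$ (after calling the set $K$ of gradients of convex functions a ``Hilbert space'' for the $L^1$ norm); but $K$ is only a closed subset of the Banach space $L^1$, and norm-boundedness in $L^1$ yields no strongly convergent subsequence --- nor even a weakly convergent one without uniform integrability --- so the paper's compactness step is unjustified as written. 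You instead obtain compactness at the level of plans via Prokhorov (tightness is free since the first marginals are all $\mu$ and the second marginals $\nu_n$ converge weakly), identify the unique narrow limit, and then carry out the genuinely nontrivial upgrade: narrow convergence to a plan carried by the graph of $T$ gives $T_n\to T$ in $\mu$-measure via the Lusin/test-function argument, and the uniform integrability inherited from $W_1(\nu_n,\nu)\to 0$ through the identity $\int_{\{\|T_n\|>M\}}\|T_n\|\,d\mu=\int_{\{\|y\|>M\}}\|y\|\,d\nu_n$ lets Vitali's theorem convert convergence in measure into $\|T_n-T\|_1\to 0$. Your route is essentially the standard stability-of-monotone-maps proof and is complete; the paper's shorter route could be repaired (e.g.\ by passing to locally uniformly convergent potentials $\varphi_m$ and using a.e.\ convergence of their gradients), but not by the $L^1$-compactness claim it actually invokes. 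Your closing observation is also the right diagnosis: convergence of the norms $\|T_n\|_1\to\|T\|_1$ alone cannot close the gap in $L^1$, so the finite-first-moment hypothesis must enter through uniform integrability rather than through a Radon--Riesz-type argument.
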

\begin{proof}
$T$ is a gradient of a convex potential. We will consider the space 
\begin{equation*}
K = \{\nabla \varphi\in L_1(\Omega,\mathbb{R}^d,\mu) \text{ , } \varphi \text{ convex $\mu$-a.e.}\}.
\end{equation*}
$T_n$ and $T$ respectively transport $\mu$ into $\nu_n$ and $\nu$. By the strong law of large numbers :
\begin{equation*}
\int_{\Omega} \|T_n(x)\|d\mu(x) = \int_{\mathbb{R}^d} \|y\| d\nu_n(y) \ps \int_{\mathbb{R}^d} \|y\| d\nu(y).
\end{equation*}
Let $\omega$ be a realization such that :
\begin{equation*}
\int_{\Omega} \|T_n(\omega,x)\|d\mu(x) = \int_{\mathbb{R}^d} \|y\| d\nu_n(\omega,y) \rightarrow \int_{\mathbb{R}^d} \|y\| d\nu(\omega,y).
\end{equation*}
In the following, we will omit $\omega$ for the sake of simplicity of the notations.\\
We deduce from the convergence result of $\|T_n\|_1$ that $T_n$ is bounded for $n$ large enough. Hence, there exists $\nabla\psi \in K$ such that $T_m=\nabla\varphi_m \rightarrow \nabla\psi$ in $K$ for a subsequence $\{m\}$.\\
If we set $d\gamma_m(x,y) =\delta(y-\nabla\varphi_m(x))d\mu(x)$ and $d\gamma(x,y) =\delta(y-\nabla\psi(x))d\mu(x)$. Then for any function $f$ with compact support,  
\begin{equation*}
\int f(x,y) d\gamma_m(x,y)(x) \rightarrow \int f(x,y) d\gamma(x,y)(x).
\end{equation*}
By the above Lemma \ref{lemma_mccann}, we have that $\gamma$ have $\mu$ and $\nu$ as marginals, i.e. $\nabla\psi$ maps $\mu$ into $\nu$. By the uniqueness of the gradient of the convex transport, $\nabla\psi=\nabla\varphi=T$ is the unique limit point of the sequence $T_n$ in the Hilbert $K$.
\end{proof}

Let $T$ and $T_n$ be the transport of a reference measure $\mu_0$ onto $\nu$ and $\nu_n$ and $Q_0$ the transport of the uniform measure on $[0;1]^d$ onto this reference measure. Let us recall that we defined the quantiles of $\nu$ and $\nu_n$ by $Q=T\circ Q_0$ and $Q_n =T_n \circ Q_0$.

\begin{theorem}
Let $\nu$ satisfy $\int \|x\|d\nu(x)<+\infty$. Then, we have for $\alpha\in\mathbb{N}_*^d$.
\begin{equation}
\hat\lambda_{\alpha} = \int_{\Omega} Q_n(u)L_{\alpha}(u)du \ps \lambda_{\alpha} = \int_{\Omega} Q(u)L_{\alpha}(u)du
\end{equation}
\label{optimal_cons}
\end{theorem}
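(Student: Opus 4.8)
The plan is to reduce the statement to the $L^1$-consistency of the monotone transports already obtained in Theorem \ref{mccann2}. First I would note that, since $Q=T\circ Q_0$ and $Q_n=T_n\circ Q_0$ with the \emph{same} fixed map $Q_0$, for any $\alpha\in\mathbb{N}_*^d$ one has
\begin{equation*}
\hat\lambda_{\alpha}-\lambda_{\alpha}=\int_{[0;1]^d}\bigl(Q_n(u)-Q(u)\bigr)L_{\alpha}(u)\,du=\int_{[0;1]^d}\bigl(T_n(Q_0(u))-T(Q_0(u))\bigr)L_{\alpha}(u)\,du .
\end{equation*}
Taking the Euclidean norm and using that $|L_{\alpha}(u)|\le 1$ on $[0;1]^d$ (it is a product of univariate shifted Legendre polynomials, each bounded by $1$ on $[0;1]$, exactly as invoked in the proof of Proposition \ref{prop_ratio}), we get
\begin{equation*}
\bigl\|\hat\lambda_{\alpha}-\lambda_{\alpha}\bigr\|\le\int_{[0;1]^d}\bigl\|T_n(Q_0(u))-T(Q_0(u))\bigr\|\,du .
\end{equation*}

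Next I would change variables through $x=Q_0(u)$. Because $Q_0$ transports the uniform measure on $[0;1]^d$ onto the reference measure $\mu_0$, the pushforward identity applied to the nonnegative measurable function $x\mapsto\|T_n(x)-T(x)\|$ gives
\begin{equation*}
\int_{[0;1]^d}\bigl\|T_n(Q_0(u))-T(Q_0(u))\bigr\|\,du=\int_{\mathbb{R}^d}\bigl\|T_n(x)-T(x)\bigr\|\,d\mu_0(x)=\|T-T_n\|_1 .
\end{equation*}
Now $\mu_0$ (the uniform law on $[0;1]^d$ or the standard Gaussian) gives no mass to sets of Hausdorff dimension at most $d-1$, and $\int\|x\|\,d\nu(x)<\infty$ by hypothesis, so Theorem \ref{mccann2} applies and yields $\|T-T_n\|_1\ps 0$. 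Combining the two displays gives $\|\hat\lambda_{\alpha}-\lambda_{\alpha}\|\ps 0$; the almost-sure event is the one already furnished by Theorem \ref{mccann2}, so no further measurability discussion is needed, and the convergence is simultaneous over all $\alpha$ since the bound is uniform in $\alpha$.

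The argument is essentially a one-line corollary once Theorem \ref{mccann2} is in hand, so I do not expect a genuine obstacle here; the only points deserving care are that the change of variables goes through the fixed, $n$-independent map $Q_0$ (hence the a.s. event is inherited verbatim) and that the L-moment weights $L_{\alpha}$ are uniformly bounded, so that $L^1$-closeness of the transports controls every vector-valued projection at once. All the real work — the Hilbert-space compactness argument in $K$ and the identification of the limit via McCann's stability Lemma \ref{lemma_mccann} — has already been carried out in the proof of Theorem \ref{mccann2}.
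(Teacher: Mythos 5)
Your proposal is correct and follows essentially the same route as the paper: write the difference $\hat\lambda_{\alpha}-\lambda_{\alpha}$ as an integral of $T_n\circ Q_0 - T\circ Q_0$ against $L_{\alpha}$, bound $L_{\alpha}$ uniformly, transfer the integral to the reference measure $\mu_0$ via the pushforward identity for $Q_0$, and conclude with the $L^1$-consistency $\|T-T_n\|_1\ps 0$ of Theorem \ref{mccann2}. The only (harmless) difference is that you make explicit the bound $|L_{\alpha}|\le 1$ and the inheritance of the almost-sure event from Theorem \ref{mccann2}, which the paper leaves implicit.
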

\begin{proof}
By using Theorem \ref{mccann2}, we have :
\begin{align*}
\|\hat\lambda_{\alpha} - \lambda_{\alpha}\| &= \left\| \int_{[0;1]^d} T_n(Q_0(t))L_{\alpha}(t) dt - \int_{[0;1]^d} T(Q_0(t))L_{\alpha}(t)dt \right\|\\
&\leq \left( \int_{[0;1]^d} \|T_n(Q_0(t))- T(Q_0(t))\|dt \right) \sup_{t\in[0;1]^d} L_{\alpha}(t)\\
&\leq  \int_{\mathbb{R}^d} \|T_n(x)- T(x)\|d\mu_0(x)\ps 0.
\end{align*}
\end{proof}

\begin{remark}
The L-moment estimator presented above has a L-statistic representation. For $\alpha\in\mathbb{N}_*^d$
\begin{equation*}
\hat\lambda_{\alpha} = \int_{[0;1]^d} T_n(Q_0(t))L_{\alpha}(t)dt = \sum_{i=1}^n \left(\int_{Q_0^{-1}(W_i(T_n))} L_{\alpha(t)}dt\right)x_{(i)}
\end{equation*}
where
\begin{equation*}
W_i(T_n) = \left\{x\in\mathbb{R}^d\text{   s.t.   } T_n(x)=x_i\right\}
\end{equation*}
\end{remark}

\section{Some extensions}
\subsection{Trimming}
\subsubsection{Semi-robust univariate trimmed L-moments}
Elamir and Seheult \cite{elamir03} proposed a trimmed version of univariate L-moments. Let us recall some notations. If $X_1,...,X_r$ are real-valued iid random variables, we note $X_{1:r}\leq X_{2:r}\leq \dots \leq X_{r:r}$ the order statistics. The TL-moments of order $r$ are defined for two trimming parameters $t_1$ and $t_2$ :
\begin{equation}
\lambda_r^{(t_1,t_2)} = \frac {1}{r} \sum_{k=0}^{r-1} (-1)^k \dbinom{k}{r-1} \mathbb{E}[X_{r-k+t_1:r+t_1+t_2}]
\label{def_tl}
\end{equation}
If $t_1=t_2=0$, the trimmed L-moments reduce to standard L-moments. Intuitively, we do not consider the $t_1$ first lower samples and the $t_2$ higher.

\begin{example}
Let us present some trimmed L-moments of low order :
\begin{eqnarray*}
\lambda_1^{(1,1)} &=& \mathbb{E}[X_{2:3}]\\
\lambda_1^{(1,0)} &=& \mathbb{E}[X_{1:2}]\\
\lambda_2^{(1,1)} &=& \frac{1}{2}\mathbb{E}[X_{3:4}-X_{2:3}]\\
\lambda_3^{(1,1)} &=& \frac{1}{3}\mathbb{E}[X_{4:5}-2X_{3:5}+X_{2:5}]\\
\lambda_4^{(1,1)} &=& \frac{1}{4}\mathbb{E}[X_{5:6}-3X_{4:6}+3X_{3:6}-X_{2:6}]
\end{eqnarray*}
\end{example}

The expectations of the order statistics are written in function of the quantile of the common distribution of the $X_i$'s through
\begin{equation*}
\mathbb{E}[X_{i:r}] = \frac{r!}{(i-1)!(r-i)!} \int_0^1 Q(u)u^{i-1}(1-u)^{r-i}du
\end{equation*}
We may also give the alternative definition of the TL-moments as scalar product in $L_2([0;1])$ :
\begin{equation*}
\lambda_r^{(t_1,t_2)} = \int_0^1 Q(u)P_{r}^{(t_1,t_2)}(u)du
\end{equation*}
with 
\begin{equation*}
P_{r}^{(t_1,t_2)}(u) = \frac {1}{r} \sum_{k=0}^{r-1} (-1)^k \dbinom{k}{r-1}\frac{(r+t_1+t_2)!}{(r-k+t_1-1)!(t_2+k)!}u^{r-k+t_1-1}(1-u)^{t_2+k}
\end{equation*}
It is worth noting that $\lambda_r^{(t_1,t_2)}$ may exist even if the common distribution of $X_i$ does not have a finite expectation. For example, the existence holds for Cauchy distribution of parameter $x_0\in\mathbb{R},\sigma>0$ and $t_1,t_2> 1$.\\
Also, some robustness of the sampled trimmed L-moments to outliers holds. Let $x_1,...,x_n$ an iid sample, then the empirical TL-moments are defined by the U-statistics corresponding to Definition \ref{def_tl} taking into account all subsamples of size $r$, similarly to the empirical L-moments. Hence, we remark that the $t_1$ lower and the $t_2$ larger $x_i$'s are not considered for the empirical TL-moments. Although this estimator is robust to these extreme points, the breakdown point of this sample version is $0$ because it eliminates a fixed number of extreme values, and so the proportion of eliminated samples will be asymptotically zero. It can be interesting to suppress a fixed proportion of high value in order to reinforce the robustness of the tool.

\subsubsection{An other approach for robust multivariate trimmed L-moments}
We cannot directly adapt the univariate trimmed version of L-moments to the multivariate case. Indeed, the multivariate L-moments are not expressed as linear combinations of expectations of order statistics. We then propose the following definition for trimmed L-moments : 
\begin{equation}
\lambda_{\alpha}^{(D)} = \int_D Q(t)L_{\alpha}(t)dt
\end{equation}
with $Q$ a transport of the uniform distribution in $[0;1]^d$ into the distribution of interest, $L_{\alpha}$ the multivariate Legendre polynomial of index $\alpha$ and $D$ a domain included in $[0;1]^d$. For example, the most intuitive choice would be to consider $D=[t_1,1-t_1]\times \dots\times [t_d,1-t_d]$ with $t_1,\dots,t_d\in[0;\frac{1}{2}]$ representing \textbf{the proportion of extremal deleted samples}.\\

\begin{figure}[h!]
  \centering
  \includegraphics[height=2.5in]{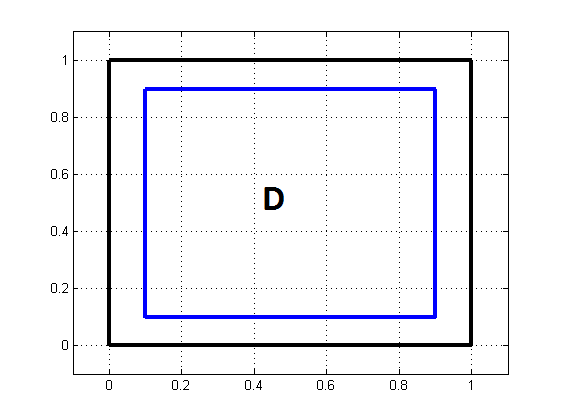}
  \caption{Area $D=[0.1;0.9]\times [0.1;0.9]\subset [0;1]^2$}
\label{trimmed_area}
\end{figure}

\begin{prop}
Let the empirical trimmed version of the L-moments issued from the optimal transport be defined by 
\begin{equation}
\hat\lambda_{\alpha}^{(D)} = \int_D Q_n(t)L_{\alpha}(t)dt
\end{equation}
where $Q_n=T_n\circ Q_0$ is defined in Theorem \ref{optimal_cons}. Then
\begin{equation}
\hat\lambda_{\alpha}^{(D)} \ps \lambda_{\alpha}^{(D)}
\end{equation}
\end{prop}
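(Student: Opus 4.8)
The plan is to mimic the proof of Theorem~\ref{optimal_cons}, the only new point being that $D\subseteq[0;1]^d$, so that restricting an integral of a nonnegative function to $D$ can only decrease it. (As in Theorem~\ref{optimal_cons}, the hypothesis $\int\|x\|\,d\nu(x)<\infty$ inherited from the construction of $Q_n$ is in force.) First I would write the difference of the two trimmed L-moments as a single integral over $D$,
\begin{equation*}
\hat\lambda_{\alpha}^{(D)}-\lambda_{\alpha}^{(D)} = \int_D \left(Q_n(t)-Q(t)\right)L_{\alpha}(t)\,dt,
\end{equation*}
and bound its norm by $\left(\sup_{t\in[0;1]^d}|L_{\alpha}(t)|\right)\int_D\|Q_n(t)-Q(t)\|\,dt$. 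Since each $L_{\alpha}$ is a polynomial, it is bounded on the compact cube $[0;1]^d$ (indeed $|L_{\alpha}(t)|\le 1$ there, as noted after~(\ref{legendre_pol})), so this supremum is finite; this boundedness is exactly what makes the truncation to $D$ harmless.

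Next, because $D\subseteq[0;1]^d$ and the integrand $\|Q_n(t)-Q(t)\|$ is nonnegative, I would enlarge the domain of integration back to the whole cube:
\begin{equation*}
\int_D\|Q_n(t)-Q(t)\|\,dt \le \int_{[0;1]^d}\|Q_n(t)-Q(t)\|\,dt = \int_{[0;1]^d}\|T_n(Q_0(t))-T(Q_0(t))\|\,dt,
\end{equation*}
using $Q_n=T_n\circ Q_0$ and $Q=T\circ Q_0$. Performing the change of variables $x=Q_0(t)$, i.e. using $Q_0\#\,unif=\mu_0$, the right-hand side becomes $\int_{\mathbb{R}^d}\|T_n(x)-T(x)\|\,d\mu_0(x)=\|T_n-T\|_1$.

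Finally I would invoke Theorem~\ref{mccann2}, which gives $\|T_n-T\|_1\ps 0$; chaining the two displays with the bound from the first paragraph yields $\|\hat\lambda_{\alpha}^{(D)}-\lambda_{\alpha}^{(D)}\|\ps 0$, which is the claim. The exceptional null set is the one supplied by Theorem~\ref{mccann2}; it is independent of $\alpha$ and of the choice of $D$, so nothing extra is needed there. I do not expect a genuine obstacle: the sole nontrivial ingredient is Theorem~\ref{mccann2} (the $L^1$-consistency of the monotone transport), and the remaining steps are the same elementary domain-monotonicity and change-of-variables manipulations already used for Theorem~\ref{optimal_cons}.
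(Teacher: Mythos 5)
Your proposal is correct and follows essentially the same route as the paper: bound the difference by $\int_D\|Q_n-Q\|\,dt$ using the boundedness of $L_\alpha$ on $[0;1]^d$, enlarge the domain to the whole cube by nonnegativity, change variables to obtain $\|T_n-T\|_1$, and conclude by Theorem~\ref{mccann2}. No gaps.
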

\begin{proof}
We simply perform the same inequality as in Theorem \ref{optimal_cons}
\begin{eqnarray*}
\|\hat\lambda_{\alpha}^{(D)} -\lambda_{\alpha}^{(D)}\| &\leq& \int_D \|T_n(Q_0(t))- T(Q_0(t))\|dt\\
&\leq& \int_{\Omega} \|T_n(Q_0(t))- T(Q_0(t))\|dt\\
&\leq&\|T_n-T\|_1\ps 0
\end{eqnarray*}
\end{proof}

\subsection{Hermite L-moments}

\subsubsection{Motivation}
Until now, L-moments have been defined through an inner product between a transport from the uniform measure on $[0;1]^d$ onto the measure of interest and the elements of an orthogonal basis of polynomials. This definition was motivated by the analogy with the univariate case. In the present multivariate setting, L-moments are not defined as expectations, but namely through Definition \ref{def_lmom}. This allows to consider other source distributions than the uniform one on $[0;1]^d$. It appears that a convenient choice is the standard Gaussian distribution on $\mathbb{R}^d$, which enjoys rotational invariance. In the present context, in contrast with the construction in Section \ref{section_gauss}, we do not consider the Gaussian distribution as a reference measure, but directly as the source measure.\\
As the Legendre polynomials are no longer orthogonal for this distribution, we change the basis as well. Once this basis $(P_{\alpha})$ for $\alpha\in\mathbb{N}_*^d$ is chosen, the alternate L-moments are defined by
\begin{equation*}
\lambda_{\alpha} = \int_{\mathbb{R}^d} T(x)P_{\alpha}(x)d\mathcal{N}_d(x)
\end{equation*}
where $T$ is the monotone transport from $d\mathcal{N}_d$ (the standard multivariate Gaussian measure) onto the target measure.\\
\begin{definition}
The univariate orthogonal polynomial basis $H_n$ on the space of functions measurable with respect to $d\mathcal{N}_1$, denoted by $L_2(\mathbb{R},\mathbb{R},d\mathcal{N}_1)$ and such that
\begin{equation}
\langle H_n,H_m\rangle = \int_{\mathbb{R}} H_m(x)H_n(x)d\mathcal{N}_1(x)= \sqrt{2\pi}n!\delta_{nm} \text{   for $n,m\in\mathbb{N}$}
\end{equation}
are called Hermite polynomials. A multivariate Hermite polynomial is indexed by $\alpha=(i_1,...,i_d)\in \mathbb{N}^d$:
\begin{equation}
H_{\alpha}(x) = H_{i_1}(x_1)...H_{i_d}(x_d) \text{   for $x=(x_1,...,x_d)\in\mathbb{R}^d$}
\end{equation}
\end{definition}

The first univariate Hermite polynomials are
\begin{eqnarray*}
H_0(x) &=& 1\\
H_1(x) &=& x\\
H_2(x) &=& x^2-1\\
H_3(x) &=& x^3-3x\\
H_4(x) &=& x^4-6x^2+3
\end{eqnarray*}

\begin{prop}
The family of multivariate Hermite polynomials $(H_{\alpha})_{\alpha\in\mathbb{N}^d}$ is orthogonal and complete in the Hilbert space $L^2(\mathbb{R}^d,\mathbb{R},d\mathcal{N}_d)$ provided with the scalar product :
\begin{equation}
\langle f,g\rangle = \int_{\mathbb{R}^d} f(x)g(x)d\mathcal{N}_d(x) 
\end{equation}
Moreover, for $\alpha=(i_1,...,i_d)$
\begin{equation}
\langle H_{\alpha},H_{\alpha'}\rangle = (2\pi)^{d/2}i_1!...i_d!
\end{equation}
\end{prop}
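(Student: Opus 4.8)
The plan is to mirror the proof of Lemma~\ref{legendre}: obtain orthogonality and the norm formula from the one-dimensional facts by tensorization, and then reduce completeness to the density of polynomials in $L^2(\mathbb{R}^d,\mathbb{R},d\mathcal{N}_d)$.

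First, since $d\mathcal{N}_d=\bigotimes_{k=1}^d d\mathcal{N}_1$, for $\alpha=(i_1,\dots,i_d)$ and $\alpha'=(i_1',\dots,i_d')$ the integral factorizes:
\[
\langle H_\alpha,H_{\alpha'}\rangle=\prod_{k=1}^d\int_{\mathbb{R}}H_{i_k}(x_k)H_{i_k'}(x_k)\,d\mathcal{N}_1(x_k)=\prod_{k=1}^d\sqrt{2\pi}\,i_k!\,\delta_{i_k i_k'}.
\]
If $\alpha\ne\alpha'$ then $i_k\ne i_k'$ for some $k$ and the corresponding factor vanishes, giving orthogonality; if $\alpha=\alpha'$ the product equals $(2\pi)^{d/2}i_1!\cdots i_d!$, the announced norm. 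Next, exactly as in Lemma~\ref{legendre}, the univariate Hermite polynomials form a basis of $\mathbb{R}[X]$ (one of each degree), so every monomial $\prod_j x_j^{k_j}$ is a finite linear combination of the $H_\alpha$, whence $\mathrm{span}\{H_\alpha\}=\mathbb{R}[X_1,\dots,X_d]$. It therefore suffices to show that $\mathbb{R}[X_1,\dots,X_d]$ is dense in $L^2(\mathbb{R}^d,d\mathcal{N}_d)$, i.e.\ that any $f\in L^2(\mathbb{R}^d,d\mathcal{N}_d)$ with $\int f(x)\,x^\beta\,d\mathcal{N}_d(x)=0$ for every $\beta\in\mathbb{N}^d$ is zero $d\mathcal{N}_d$-a.e.

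To prove this I would introduce
\[
g(z)=\int_{\mathbb{R}^d}f(x)\,e^{i\langle z,x\rangle}\,d\mathcal{N}_d(x),\qquad z\in\mathbb{C}^d.
\]
By Cauchy--Schwarz, $\int |f(x)|\,\|x\|^{n}\,d\mathcal{N}_d(x)\le \|f\|_{L^2(d\mathcal{N}_d)}\bigl(\int\|x\|^{2n}d\mathcal{N}_d(x)\bigr)^{1/2}$, and the Gaussian moments $\int\|x\|^{2n}d\mathcal{N}_d(x)$ grow at most like $C^n n!$ up to polynomial factors, so the series obtained by expanding $e^{i\langle z,x\rangle}$ converges absolutely, uniformly on compact subsets of $\mathbb{C}^d$. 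Hence $g$ is entire and $\partial_z^\beta g(0)=i^{|\beta|}\int f(x)\,x^\beta\,d\mathcal{N}_d(x)=0$ for all $\beta$, so $g\equiv 0$. In particular $g$ vanishes on $\mathbb{R}^d$, where it is the Fourier transform of the finite signed measure $f\,d\mathcal{N}_d$ (finite because $f\in L^1(d\mathcal{N}_d)$, the Gaussian being a probability measure). Injectivity of the Fourier transform on finite measures then gives $f\,d\mathcal{N}_d=0$, i.e.\ $f=0$ $d\mathcal{N}_d$-a.e. Thus the orthogonal complement of the polynomials is trivial and $(H_\alpha)_{\alpha\in\mathbb{N}^d}$ is complete.

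The main obstacle is the analyticity step: justifying the interchange of summation and integration and the convergence of the moment series, which relies on all Gaussian moments being finite with controlled growth (this is precisely where the Gaussian weight, rather than a heavy-tailed one, matters). An alternative that sidesteps this entirely: the map $U\colon L^2(\mathbb{R}^d,d\mathcal{N}_d)\to L^2(\mathbb{R}^d,dx)$ defined by $Uf(x)=(2\pi)^{-d/4}e^{-\|x\|^2/4}f(x)$ is a unitary isomorphism (a direct change-of-density computation), and it carries each $H_\alpha$ to a scalar multiple of the Hermite function $x\mapsto e^{-\|x\|^2/4}H_\alpha(x)$. Since the latter family is the classical orthonormal basis of $L^2(\mathbb{R}^d,dx)$ (obtained by tensorizing the one-dimensional case, where completeness is the standard fact about harmonic-oscillator eigenfunctions), completeness of $(H_\alpha)$ in $L^2(\mathbb{R}^d,d\mathcal{N}_d)$ follows at once; one may cite whichever of the two arguments is preferred.
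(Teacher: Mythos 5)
Your proof is correct, and for the completeness part it takes a genuinely different (and, in fact, necessary) route from the paper. The paper's entire proof is the sentence ``the proof is very similar to the one presented in Lemma~\ref{legendre}''; but the completeness argument of Lemma~\ref{legendre} rests on mollification plus Stone--Weierstrass on the \emph{compact} cube $[0;1]^d$, and that argument does not transfer to $L^2(\mathbb{R}^d,\mathbb{R},d\mathcal{N}_d)$: on a non-compact domain uniform polynomial approximation is unavailable, and density of polynomials in $L^2$ of a measure on $\mathbb{R}^d$ is a genuine moment-problem issue (it fails for some measures, e.g.\ certain log-normal-type weights). Your orthogonality and norm computation is the same tensorization as the paper's (and you correctly reproduce the paper's $\sqrt{2\pi}$ normalization convention, whatever one thinks of it). For completeness you supply the missing ingredient: the analytic continuation of $z\mapsto\int f(x)e^{i\langle z,x\rangle}d\mathcal{N}_d(x)$, justified by the factorial-rate growth of Gaussian moments, followed by injectivity of the Fourier transform on finite measures --- this is the standard and correct argument, and your growth estimate $\sum_n (C^{1/2}|z|)^n/(n!)^{1/2}<\infty$ does close the interchange of sum and integral. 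The alternative via the unitary map onto the Hermite functions in $L^2(\mathbb{R}^d,dx)$ is equally valid and shorter if one is willing to cite the classical completeness of the harmonic-oscillator eigenfunctions. What your approach buys is a proof that actually works where the paper's cited template would not; what the paper's phrasing buys is brevity at the cost of hiding exactly the step where the Gaussian tail behaviour is essential.
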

\begin{proof}
The proof is very similar to the one presented in Lemma \ref{legendre}.
\end{proof}

The Hermite L-moments are then defined by
\begin{equation}
\lambda_{\alpha} = \int_{\mathbb{R}^d} T(x)H_{\alpha}(x)d\mathcal{N}_d(x).
\label{hermite_lmom}
\end{equation}

\begin{remark}
Let us note that this definition is not compatible with the L-moments defined by Hosking in the univariate case and for $T$ the monotone transport from $d\mathcal{N}_d$ onto the target measure. Indeed, in that case, Equation \ref{hermite_lmom} is written for $\alpha=r\geq 1$
\begin{equation*}
\lambda_r = \int_0^1 Q(u) H_r(Q_{\mathcal{N}_1}(u))du
\end{equation*}
where $Q_{\mathcal{N}}$ and $Q$ respectively are the quantiles of the univariate standard Gaussian and the measure of interest.
\end{remark}

\subsubsection{Property of invariance/equivariance}
The main reason of defining such objects lies in the following property of invariance/equivariance.
\begin{prop}
Let $X$ be a random vector in $\mathbb{R}^d$ and $\nabla\varphi$ be the optimal transport from $\mathcal{N}_d$ onto the measure associated to $X$ such that $\varphi$ is convex. Let denote by $\lambda_{\alpha}(X)$ the Hermite L-moments of $X$.\\
First, let $\sigma>0, m\in\mathbb{R}^d$. Then
\begin{equation}
\lambda_{\alpha}(\sigma X+m) = \sigma\lambda_{\alpha}(X) +m\mathds{1}_{\alpha=(1...1)}.
\end{equation}
Let us note the L-moment matrix of order two
\begin{equation}
\Lambda_2(X) = \left(\int_{\mathbb{R}^d} (\nabla\varphi)_i(x)H_1(x_j)d\mathcal{N}_d(x)\right)_{i,j=1...d} = \left(\int_{\mathbb{R}^d} (\nabla\varphi)_i(x)x_jd\mathcal{N}_d(x)\right)_{i,j=1...d}.
\end{equation}
Then, if $P$ is an orthogonal matrix (i.e. $PP^T=P^TP=I_d$),
\begin{equation}
\Lambda_2(PX) = P^T\Lambda_2(X)P.
\end{equation}
\label{equivariance_hermite}
\end{prop}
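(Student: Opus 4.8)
The plan is to treat the two claims separately, since they follow from two different (but related) changes of variable. For the affine equivariance $\lambda_{\alpha}(\sigma X+m)=\sigma\lambda_{\alpha}(X)+m\mathds{1}_{\alpha=(1\dots1)}$, I would first invoke the lemma in Section \ref{section_gauss} (the one describing the potentials of $AX$, $aX$, $X+m$): if $\varphi$ is the convex potential with $\nabla\varphi(N)\eqlaw X$, then $\psi(x)=\sigma\varphi(x)+m\cdot x$ is convex and $\nabla\psi(x)=\sigma\nabla\varphi(x)+m$, so $\nabla\psi$ is the monotone transport from $\mathcal{N}_d$ onto $\sigma X+m$. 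Plugging this into the defining formula \eqref{hermite_lmom},
\begin{equation*}
\lambda_{\alpha}(\sigma X+m)=\int_{\mathbb{R}^d}\bigl(\sigma\nabla\varphi(x)+m\bigr)H_{\alpha}(x)\,d\mathcal{N}_d(x)=\sigma\lambda_{\alpha}(X)+m\int_{\mathbb{R}^d}H_{\alpha}(x)\,d\mathcal{N}_d(x).
\end{equation*}
The last integral is $\langle H_{\alpha},H_{(0,\dots,0)}\rangle$, which by orthogonality of the Hermite family vanishes unless $\alpha=(1,\dots,1)$ (recall the paper's indexing starts at $1$, so $(1,\dots,1)$ plays the role of the constant polynomial $H_0^{\otimes d}$), and equals $(2\pi)^{d/2}=\int d\mathcal{N}_d$ when $\alpha=(1,\dots,1)$; dividing out the normalization (or rather, noting $H_1(x_j)$ here is the degree-$0$ polynomial $1$ in the paper's convention) gives the indicator term. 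I would be careful to state explicitly which convention for $H_1$ versus $H_0$ is in force so the indicator comes out right.

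For the second claim, the key observation is again at the level of potentials: by the same lemma, if $\nabla\varphi$ is the monotone transport from $\mathcal{N}_d$ onto $X$ and $P$ is orthogonal, then $x\mapsto\varphi(Px)$ is convex with gradient $P^{T}\nabla\varphi(Px)$, and since $PN\eqlaw N$ for a standard Gaussian, $P^{T}\nabla\varphi(P\,\cdot\,)$ is the monotone transport from $\mathcal{N}_d$ onto $PX$. Therefore
\begin{equation*}
\Lambda_2(PX)_{ij}=\int_{\mathbb{R}^d}\bigl(P^{T}\nabla\varphi(Px)\bigr)_i\,x_j\,d\mathcal{N}_d(x).
\end{equation*}
Now substitute $y=Px$; since $\mathcal{N}_d$ is rotation-invariant, $d\mathcal{N}_d(x)=d\mathcal{N}_d(y)$, and $x_j=(P^{T}y)_j=\sum_k P_{kj}y_k$. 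This turns the integral into $\sum_k\sum_l (P^{T})_{il}\,P_{kj}\int (\nabla\varphi)_l(y)\,y_k\,d\mathcal{N}_d(y)=\sum_{k,l}(P^{T})_{il}\,\Lambda_2(X)_{lk}\,P_{kj}$, i.e. $\Lambda_2(PX)=P^{T}\Lambda_2(X)P$ in matrix form.

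The only real subtlety — and the step I would be most careful about — is the bookkeeping: matching the matrix index on $\Lambda_2$ to the Legendre-versus-Hermite indexing convention of the paper (the $j$-th column of $\Lambda_2$ is the L-moment indexed by the multi-index with a $2$ in position $j$ and $1$'s elsewhere, corresponding to the factor $H_1(x_j)=x_j$), and tracking the two transposes so that the orthogonal change of variables $y=Px$ lands the factors on the correct sides. There is no analytic difficulty: finiteness of all integrals is guaranteed by $\mathbb{E}\|X\|<\infty$ exactly as in Definition \ref{def_lmom}, and no density or smoothness hypothesis on $X$ is needed because we only manipulate the potential and use rotation-invariance of the source measure. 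I would close by remarking that this is precisely the rotational-equivariance property that the Legendre-based L-moments of Proposition \ref{equivariance1} lacked, which was the stated motivation for introducing the Hermite variant.
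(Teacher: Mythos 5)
Your argument is essentially the paper's own proof: the first part uses the potential $\psi=\sigma\varphi+\langle\cdot,m\rangle$ exactly as in Proposition \ref{equivariance1}, and the second part uses the composed potential $x\mapsto\varphi(Px)$, rotation invariance of $\mathcal{N}_d$, and the orthogonal change of variables, just as the paper does. One remark: you (like the paper) assert that $P^{T}\nabla\varphi(P\,\cdot\,)$ transports $\mathcal{N}_d$ onto $PX$, whereas $P^{T}\nabla\varphi(PN)\eqlaw P^{T}\nabla\varphi(N)\eqlaw P^{T}X$, so the computation literally establishes $\Lambda_2(P^{T}X)=P^{T}\Lambda_2(X)P$, i.e.\ $\Lambda_2(PX)=P\Lambda_2(X)P^{T}$ --- a harmless relabeling since $P\mapsto P^{T}$ is a bijection of the orthogonal group, but worth fixing given that you flagged the transpose bookkeeping as the delicate step.
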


\begin{remark}
This second property seems to be particular to the L-moments of degree two.
\end{remark}

\begin{proof}
The first part is similar to the proof of Proposition \ref{equivariance1}.\\
For the second part, let us define the potential $\psi:x\mapsto \varphi(Px)$. Then $\psi$ is convex and if $N_d$ denotes a standard multivariate Gaussian random vector
\begin{equation*}
\nabla\psi(N_d) = P^T\nabla\varphi(PN_d) \eqlaw P^TX
\end{equation*}
since $N_d$ is  invariant by rotation.\\
Then, $\nabla\psi$ is the monotone transport associated to $P^TX$. Thus,
\begin{eqnarray*}
\Lambda_2(PX) &=& \int_{\mathbb{R}^d} \nabla\psi(x)x^Td\mathcal{N}_d(x)\\
&=& P^T\int_{\mathbb{R}^d} \nabla\varphi(Px)x^Td\mathcal{N}_d(x)\\
&=& P^T\int_{\mathbb{R}^d} \nabla\varphi(y)(P^{-1}y)^Td\mathcal{N}_d(y)\\
&=& P^T\Lambda_2(X)P
\end{eqnarray*}
\end{proof}

\subsubsection{Applications for linear combinations of independent variables}
The previous property is well adapted for the study of linear combinations of independent variables $Z_1,...,Z_d$.\\
We suppose that each variable $Z_i$ is normalized by its second L-moment i.e. $\lambda_2(Z_i) = 1$.\\
Let us recall from Example \ref{lciv1} that if $(e_1,...,e_d)$ is an orthonormal basis of $\mathbb{R}^d$, $(b_1,...,b_d)$ the canonical basis, then the following potential
\begin{equation}
\varphi(x) = \sum_{i=1}^d \sigma_i \varphi_i(x^Te_i)
\end{equation}
is convex, with each function $\varphi_i$ convex and $a_i>0$. If we denote $P:=\sum_{i=1}^d  e_ib_i^T$ and $D:=\sum_{i=1}^d \sigma_ib_ib_i^T$, then
\begin{equation}
\nabla\varphi(x) = P^TD \myvec{\varphi'_1(x^Te_1)}{\varphi'_d(x^Te_d)}.
\end{equation}
If, for each $i$, we note $\varphi'_i:=Q_i\circ\mathcal{N}_d$ with $Q_i$ the quantile of $Z_i$
\begin{equation}
Y\eqlaw P^TD Z = P^TD\myvec{Z_1}{Z_d}.
\end{equation}
Since we have $\Lambda_2(Z)=I_d$, we deduce from Proposition \ref{equivariance_hermite} that
\begin{equation}
\Lambda_2(Y) = PDP^T.
\end{equation}
Let us remark that the covariance of $Y$ is given by
\begin{equation}
Cov(Y) = P^TD\left(\begin{array}{cccc} Cov(Z_1) & 0 & \dots & 0\\ 0& \ddots & \ddots & \vdots \\ \vdots & \ddots & \ddots & 0\\ 0 & \dots & 0& Cov(Z_d)\end{array}\right)DP.
\end{equation}
The covariance and the Hermite L-moment matrix of degree $2$ share the same rotation structure for this family of distributions. A principal component analysis can then be done on either matrix for example. Likewise, we can perform a straightforward estimation of $P$ and $D$ thanks to a method based on the L-moments.\\
$Tr(\Lambda_2)$ and $\det(\Lambda_2)$ are clearly invariant with respect to the rotation matrix $P$. Furthermore, we compute some Hermite L-moments of degree $3$ in order to identify more invariants of the distribution of $Y$; this would lead to specific plug-iin estimators for either $P$ or $D$.\\
Let $\lambda_r^{(H)}(Z_i)$ be the r-th univariate Hermite L-moment of $Z_i$ i.e.
\begin{equation*}
\lambda_r^{(H)}(Z_i) = \int_{\mathbb{R}} Q_i\circ \mathcal{N}_1(x)H_r(x)d\mathcal{N}_1(x) \text{   with $Q_i$ the quantile of $Z_i$}
\end{equation*}

\begin{lemma}
\label{lemma_H3}
If we denote by $\Lambda_3$ the matrix 
\begin{equation*}
\Lambda_3(Y) = \left(\int_{\mathbb{R}^d} (\nabla\varphi)_i(x)(x_j^2-1)d\mathcal{N}_d(x)\right)_{i,j=1...d} 
\end{equation*}
and if $Y$ is a linear combination of independent variables
\begin{equation*}
\Lambda_3(Y) = PD\left( P_{ji}^2 (\lambda_3^{(H)}(Z_i)-\lambda_1^{(H)}(Z_i))\right)_{i,j=1...d}
\end{equation*}
where $P=(P_{ij})_{i,j=1...d}$.
\end{lemma}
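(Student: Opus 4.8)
The plan is to push the definition of $\Lambda_3(Y)$ through the explicit form of the monotone transport $\nabla\varphi$ obtained in Example~\ref{lciv1}, and then to collapse each resulting $d$-dimensional Gaussian integral to a one-dimensional one using the rotational invariance of $\mathcal{N}_d$. The clean one-dimensional integrals that remain are precisely (combinations of) univariate Hermite L-moments of the $Z_i$'s.

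First I would expand the transport in coordinates. From $\nabla\varphi(x)=PD\,\big(\varphi_1'(\langle x,e_1\rangle),\dots,\varphi_d'(\langle x,e_d\rangle)\big)^{T}$ (with $P$ the orthogonal matrix whose columns are $e_1,\dots,e_d$ and $D=\mathrm{diag}(\sigma_1,\dots,\sigma_d)$) one reads off $(\nabla\varphi(x))_i=\sum_{k=1}^{d}P_{ik}\,\sigma_k\,\varphi_k'(\langle x,e_k\rangle)$. Plugging this into the definition of $\Lambda_3(Y)$ and exchanging the finite sum with the integral,
\begin{equation*}
\big(\Lambda_3(Y)\big)_{ij}=\sum_{k=1}^{d}P_{ik}\,\sigma_k\,I_{kj},\qquad I_{kj}:=\int_{\mathbb{R}^d}\varphi_k'(\langle x,e_k\rangle)\,(x_j^{2}-1)\,d\mathcal{N}_d(x),
\end{equation*}
so everything reduces to evaluating $I_{kj}$.

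Second, I would compute $I_{kj}$ by choosing coordinates adapted to $e_k$. Decompose the canonical vector $b_j=c_{kj}e_k+f$ with $c_{kj}:=\langle b_j,e_k\rangle$ and $f\perp e_k$, hence $\|f\|^{2}=1-c_{kj}^{2}$. Under $\mathcal{N}_d$ the linear forms $u=\langle x,e_k\rangle$ and $w=\langle x,f\rangle$ are independent, with $u\sim\mathcal{N}(0,1)$ and $w\sim\mathcal{N}(0,1-c_{kj}^{2})$, and $x_j=c_{kj}u+w$. Expanding $x_j^{2}-1=c_{kj}^{2}u^{2}+2c_{kj}uw+w^{2}-1$, the mixed term has zero expectation against $\varphi_k'(u)$ (independence and $\mathbb{E}[w]=0$) and the $w^{2}-1$ term contributes $(1-c_{kj}^{2}-1)\,\mathbb{E}[\varphi_k'(u)]$; combining,
\begin{equation*}
I_{kj}=c_{kj}^{2}\,\mathbb{E}\big[\varphi_k'(N)(N^{2}-1)\big],\qquad N\sim\mathcal{N}(0,1),
\end{equation*}
the degenerate case $c_{kj}=\pm1$ (i.e. $f=0$) being included. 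Since $\varphi_k'=Q_k\circ\mathcal{N}_1$ with $Q_k$ the quantile of $Z_k$, the change of variable $t=\mathcal{N}_1(u)$ rewrites $\mathbb{E}[\varphi_k'(N)(N^{2}-1)]=\int_0^1 Q_k(t)\big(\mathcal{N}_1^{-1}(t)^{2}-1\big)\,dt$, which by the definition of the univariate Hermite L-moments of $Z_k$ recalled above equals $\lambda_3^{(H)}(Z_k)-\lambda_1^{(H)}(Z_k)$. Finally $c_{kj}=\langle b_j,e_k\rangle=P_{jk}$, so substituting back gives $\big(\Lambda_3(Y)\big)_{ij}=\sum_{k=1}^{d}P_{ik}\,\sigma_k\,P_{jk}^{2}\,\big(\lambda_3^{(H)}(Z_k)-\lambda_1^{(H)}(Z_k)\big)$, which is the $(i,j)$ entry of $PD\big(P_{ji}^{2}(\lambda_3^{(H)}(Z_i)-\lambda_1^{(H)}(Z_i))\big)_{i,j=1\dots d}$, as claimed.

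The main obstacle is the second step: one must set up the orthogonal splitting of each $b_j$ along each $e_k$ and then invoke the rotational invariance of the standard Gaussian (equivalently, the independence of $\langle x,e_k\rangle$ and $\langle x,f\rangle$) in exactly the right way, so that only the component of $x_j$ in the direction $e_k$ survives and all cross terms and the normalising constant cancel, leaving the single clean one-dimensional integral. Everything else — the interchange of the finite sum with the integral, the identification with the univariate Hermite L-moments, and keeping the indices of $P$ and $D$ (and the $P$ versus $P^{T}$ placement) straight — is routine bookkeeping; finiteness of $\mathbb{E}[\|Y\|]$, which is assumed, guarantees that all integrals involved are well defined.
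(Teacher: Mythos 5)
Your argument is correct and follows essentially the same route as the paper's proof: expand $(\nabla\varphi)_i(x)=\sum_k P_{ik}\sigma_k\varphi_k'(\langle x,e_k\rangle)$ and reduce each entry to a one-dimensional Gaussian integral by decomposing $b_j$ along $e_k$ and its orthogonal complement, your version merely avoiding the paper's explicit collinear/non-collinear case split by keeping $x_j^2-1$ together as $H_2(x_j)$. One small caveat on your closing remark: since the weight $x_j^2-1$ is unbounded, $\mathbb{E}[\|Y\|]<\infty$ alone does not guarantee that these integrals converge — one needs the third Hermite L-moments of the $Z_i$ to exist, an assumption the paper also leaves implicit.
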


\begin{proof}
Let us recall that the optimal transport associated to $Y$ is 
\begin{equation*}
\nabla\varphi(x) = PD\myvec{T_1(x^Te_1)}{T_d(x^Te_d)}
\end{equation*}
with $T_i=Q_i\circ \mathcal{N}_1$. Then $\Lambda_3 = PDM$ with the (i,j)-th element of $M$ equal to
\begin{eqnarray*}
M_{ij} &=& \int_{\mathbb{R}^d} T_i(b_i^TP^Tx)x_j^2d\mathcal{N}_d(x)-\lambda_1(Z_i)\\
&=& \int_{\mathbb{R}^d} T_i(y_i)(b_j^TPy)^2d\mathcal{N}_d(y)-\lambda_1(Z_i)
\end{eqnarray*}
Now let $e_j'=P^Tb_j$. If $e_j'$ and $b_i$ are collinear, $b_i^TP^Tb_j=1$ then
\begin{equation*}
M_{ij} = \int_{\mathbb{R}} T_i(y)(b_i^TP^Tb_j y)^2d\mathcal{N}_1(y) -\lambda_1(Z_i) = \lambda_3^{(H)}(Z_i)-\lambda_1(Z_i).
\end{equation*}
Otherwise, let note $a_1 = b_i^Te_j' = b_i^TP^Tb_j$ and $a_2=\sqrt{1-a_1^2}$. If we complete $e_j'$ and $b_i$ with $d-2$ orthonormal vector, we can produce the change of variable corresponding to his new basis i.e.
\begin{eqnarray*}
M_{ij} &=& \int_{\mathbb{R}^2} T_i(y_1')(a_1y_1'+a_2y_2')^2 d\mathcal{N}_1(y_1')d\mathcal{N}_1(y_2') -\lambda_1(Z_i)\\
&=& a_1^2 \int_{\mathbb{R}} T_i(y_1')y_1^2 d\mathcal{N}_1(y_1') + a_2^2\int_{\mathbb{R}} T_i(y_1') d\mathcal{N}_1(y_1') -\lambda_1(Z_i)\\
&=&  (b_i^TP^Tb_j)^2 \lambda_3^{(H)}(Z_i) -(b_i^TP^Tb_j)^2\lambda_1^{(H)}(Z_i)
\end{eqnarray*}
which concludes the proof.
\end{proof}

Consider the case $d=2$. Then, let $L := diag(\lambda_3^{(H)}(Z_1)-\lambda_1^{(H)}(Z_2))$ and $D = diag(\sigma_1,\sigma_2)$. By Lemma \ref{lemma_H3}, it holds
\begin{equation*}
\left\{\begin{array}{lll}
\Lambda_3 &=& PDL(P.P)\\
\Lambda_2\Lambda_3 &=& PD^2L(P.P)\\
\Lambda_2^{-1}\Lambda_3 &=& PL(P.P)\\
\end{array}\right.
\end{equation*}
where $P.P$ denotes the Hadamard product between $P$ and $P$. We remark that $\Lambda_2^{-1}\Lambda_3$ is invariant with respect to $D$. Furthermore, if $\sigma_1\neq \sigma_2$, we can define $a,b$ such that
\begin{equation*}
 \left(\begin{array}{c} a\\b\end{array}\right) = - \left(\begin{array}{cc} \sigma_1 & 1\\ \sigma_2 & 1\end{array}\right)^{-1} \left(\begin{array}{c} \sigma_1^2\\\sigma_2^2\end{array}\right)
\end{equation*}
Then $\Lambda_2\Lambda_3+a\Lambda_3+b\Lambda_2^{-1}\Lambda_3 = 0$.

\subsubsection{Estimation of Hermite L-moments}
Let $x_1,...,x_n\in\mathbb{R}^d$ be n iid realizations of a random vector $X$ (with measure $\nu$). The estimation of Hermite L-moments uses the estimation of a monotone transport presented in Section \ref{section_estimation}. If $T_n$ is such a transport from $d\mathcal{N}_d$ onto $\nu_n=\sum_{i=1}^n \delta_{x_i}$, the estimation of the $\alpha$-th Hermite L-moment is
\begin{equation}
\hat\lambda_{\alpha} = \int_{\mathbb{R}^d} T_n(x)H_{\alpha}(x)d\mathcal{N}_d(x) = \sum_{i=1}^n \left(\int_{W_i(T_n)} H_{\alpha}(x)d\mathcal{N}_d(x)\right) x_i
\end{equation}
with the notations of Section \ref{section_estimation} i.e.
\begin{equation*}
W_i(T_n) = \left\{ x\in\mathbb{R}^d \text{   s.t.   } T_n(x)=x_i\right\}
\end{equation*}

\begin{theorem}
Let $\nu$ satisfy $\int \|x\|d\nu(x)<+\infty$. Then, we have for $\alpha\in\mathbb{N}_*^d$.
\begin{equation}
\hat\lambda_{\alpha} = \int_{\mathbb{R}^d} T_n(x)H_{\alpha}(x)d\mathcal{N}_d(x) \ps \lambda_{\alpha} = \int_{\mathbb{R}^d} T(x)H_{\alpha}(x)d\mathcal{N}_d(x)
\end{equation}
\label{hermite_cons}
\end{theorem}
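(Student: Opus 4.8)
The plan is to reproduce the scheme of the proof of Theorem~\ref{optimal_cons}, now with the standard Gaussian as source measure and the Hermite polynomials in place of the Legendre ones. First I would observe that Theorem~\ref{mccann2} applies with $\mu=\mathcal{N}_d$ and $\Omega=\mathbb{R}^d$: the space $\mathbb{R}^d$ is convex, $\mathcal{N}_d$ is absolutely continuous hence gives no mass to sets of Hausdorff dimension at most $d-1$, and $\nu$ has finite first moment by assumption. Hence, almost surely,
\begin{equation*}
\|T_n-T\|_1=\int_{\mathbb{R}^d}\|T_n(x)-T(x)\|\,d\mathcal{N}_d(x)\longrightarrow 0,
\end{equation*}
where $T$ and $T_n$ are the monotone transports of $\mathcal{N}_d$ onto $\nu$ and $\nu_n$. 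Then I would bound
\begin{equation*}
\|\hat\lambda_{\alpha}-\lambda_{\alpha}\|=\Bigl\|\int_{\mathbb{R}^d}\bigl(T_n(x)-T(x)\bigr)H_{\alpha}(x)\,d\mathcal{N}_d(x)\Bigr\|\leq\int_{\mathbb{R}^d}\|T_n(x)-T(x)\|\,|H_{\alpha}(x)|\,d\mathcal{N}_d(x).
\end{equation*}

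The step that differs substantially from Theorem~\ref{optimal_cons} is that the weight $H_{\alpha}$ is no longer bounded on the (now unbounded) domain, so $\sup|H_\alpha|$ cannot be pulled out of the integral. I would therefore split over a ball $B_R=\{x\in\mathbb{R}^d:\|x\|\le R\}$ and its complement. On $B_R$ the polynomial $|H_\alpha|$ is bounded by a constant $C_{\alpha,R}$, so that contribution is at most $C_{\alpha,R}\|T_n-T\|_1\to 0$ almost surely for each fixed $R$. It remains to control the tail
\begin{equation*}
\int_{\mathbb{R}^d\setminus B_R}\bigl(\|T_n(x)\|+\|T(x)\|\bigr)|H_{\alpha}(x)|\,d\mathcal{N}_d(x)
\end{equation*}
uniformly in $n$, and to show it vanishes as $R\to\infty$. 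The $T$ term is handled once one knows $\int\|T\|\,|H_\alpha|\,d\mathcal{N}_d<\infty$, i.e.\ that $\lambda_\alpha$ in Equation~\ref{hermite_lmom} is well defined. For the $T_n$ term, the piecewise-constant representation of $T_n$ recalled just before the statement gives $\int\|T_n\|\,|H_\alpha|\,d\mathcal{N}_d=\sum_i\|x_i\|\int_{W_i(T_n)}|H_\alpha|\,d\mathcal{N}_d$; combined with the strong law of large numbers for $\frac{1}{n}\sum_i\|x_i\|$ and with the fact that every cell $W_i(T_n)$ has Gaussian mass $1/n$ (so the cells on which $|H_\alpha|$ is large are confined to a region of Gaussian mass tending to $0$), this should yield the uniform integrability of the family $\{\,\|T_n\|\,|H_\alpha|\,\}_n$ with respect to $\mathcal{N}_d$.

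The main obstacle is precisely this uniform tail estimate: the boundedness trick of Theorem~\ref{optimal_cons} is unavailable, and one must instead trade it for integrability of $\|T\|\,|H_\alpha|$ together with a uniform handle on the extremal behaviour of the empirical transport $T_n$, namely that the sample points assigned to the Gaussian-extreme cells do not grow fast enough to defeat the decay of $d\mathcal{N}_d$ against the polynomial growth of $H_\alpha$. Once the uniform integrability is secured, I would conclude by the usual subsequence argument: from $\|T_n-T\|_1\to 0$ almost surely one extracts along any subsequence a further subsequence along which $T_n\to T$ $\mathcal{N}_d$-almost everywhere, and Vitali's convergence theorem then gives $\int(T_n-T)H_\alpha\,d\mathcal{N}_d\to 0$ along it; since the limit is the same for every subsequence, the whole sequence converges, which is the claimed $\hat\lambda_\alpha\ps\lambda_\alpha$.
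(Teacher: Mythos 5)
Your diagnosis of where the argument for Theorem \ref{optimal_cons} breaks down is exactly right: the paper's own proof of the present theorem is a one-line appeal to similarity with Theorem \ref{optimal_cons}, and that earlier proof pulls $\sup_t|L_\alpha(t)|$ out of the integral --- a step that is simply unavailable here because $H_\alpha$ is an unbounded polynomial on the unbounded domain $\mathbb{R}^d$. Your truncation over $B_R$, the use of Theorem \ref{mccann2} on the ball, and the plan to finish via uniform integrability and Vitali are the correct architecture for repairing this. So in locating the real difficulty you have done more than the paper does.

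The gap is that the uniform tail estimate is announced but not proved, and under the stated hypothesis $\int\|x\|\,d\nu(x)<\infty$ it cannot be proved, because the limit $\lambda_\alpha$ itself need not be finite. Already for $d=1$ and $\alpha=r\geq 2$ one has $|H_r(\mathcal{N}_1^{-1}(u))|\sim\bigl(2\log\tfrac{1}{1-u}\bigr)^{r/2}$ as $u\to 1$, so
\begin{equation*}
\int_{\mathbb{R}}\|T(x)\|\,|H_r(x)|\,d\mathcal{N}_1(x)=\int_0^1 |Q(u)|\,|H_r(\mathcal{N}_1^{-1}(u))|\,du
\end{equation*}
requires a moment of order roughly $\mathbb{E}\bigl[|X|\bigl(\log(1+|X|)\bigr)^{r/2}\bigr]<\infty$, which is strictly stronger than $\mathbb{E}|X|<\infty$ (take $Q(u)=(1-u)^{-1}\bigl(\log\tfrac{1}{1-u}\bigr)^{-2}$ near $u=1$ for a counterexample). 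The same obstruction defeats the claimed uniform integrability of $\{\|T_n\|\,|H_\alpha|\}_n$: by cyclical monotonicity the Gaussian-extreme cells are assigned to the extreme sample points, the most extreme cell carries weight $\int_{W_i(T_n)}|H_\alpha|\,d\mathcal{N}_d\gtrsim (\log n)^{|\alpha|/2}/n$, and for a heavy-tailed $\nu$ with only a first moment the product with $\max_i\|x_i\|$ need not be $o(1)$. Your parenthetical remark that the cells where $|H_\alpha|$ is large ``are confined to a region of Gaussian mass tending to $0$'' is also not quite right --- that region is $B_R^c$, whose mass is small for large $R$ but does not depend on $n$. To make the theorem correct one must either strengthen the hypothesis to something like $\int\|x\|\bigl(\log(1+\|x\|)\bigr)^{|\alpha|/2}\,d\nu(x)<\infty$ (or $\int\|x\|^{1+\epsilon}\,d\nu(x)<\infty$), under which your scheme --- finiteness of $\int\|T\|\,|H_\alpha|\,d\mathcal{N}_d$, uniform integrability via a de la Vall\'ee Poussin criterion transported through $T_n$, then Vitali along a.e.-convergent subsequences --- does go through; as stated, neither your proposal nor the paper's proof closes the argument.
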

\begin{proof}
The proof is very similar to the proof of Theorem \ref{optimal_cons}
\end{proof}

\subsubsection{Numerical applications}
We will present some numerical results for the estimation of L-moments and Hermite L-moments issued from the monotone transport. For that purpose, we simulate a linear combination of independent vectors in $\mathbb{R}^2$
\begin{equation*}
Y = P\myvecz{\sigma_1Z_1}{\sigma_2Z_2}
\end{equation*}
with 
\begin{equation*}
P = \frac{1}{\sqrt{2}}\left(\begin{array}{cc}-1 & 1 \\1& 1\end{array}\right)
\end{equation*}
and $Z_1,Z_2$ are drawn from a symmetrical Weibull distribution $\epsilon W_{\nu}$ where $\epsilon$ is a Rademacher random variable ($\epsilon=2B-1$ with $B$ is a Bernoulli a parameter $1/2$) and $W_{\nu}$ is a Weibull of shape parameter $\nu$ and scale parameter $1$. The density of $W_{\nu}$ is given by 
\begin{equation*}
f_{\theta}(x) = 8\nu (8x)^{\nu-1}e^{-(8x)^{\nu}}.
\end{equation*}

We perform $N=100$ estimations of the second L-moment matrix $\Lambda_2$, the second Hermite L-moment matrix $\Lambda_2^{(H)}$ and the covariance matrix $\Sigma$ for a sample of size $n=30$ or $100$. We present the results in Table \ref{numeric} through the following features
\begin{itemize}
\item The mean of the different estimates
\item The median of the different estimates
\item The coefficient of variation of the estimates $\hat\theta_1,...,\hat\theta_N$ (for an arbitrary parameter $\theta$)
\begin{equation*}
CV = \frac{\left(\sum_{i=1}^N \left(\theta_i-\frac{1}{N}\sum_{i=1}^N \theta_i\right)^2\right)^{1/2}}{\frac{1}{N}\sum_{i=1}^N \theta_i}
\end{equation*}

\end{itemize}

Table \ref{numeric} illustrates the fact that the L-moment estimator are more stable than classical covariance estimates but more biased for heavy-tailed distributions. The effects should be even more visible for moments of higher order. However, our sampled L-moments introduces a bias for small $n$ contrary to classical empirical covariance.

\begin{table}
\center
\begin{tabular}{|c|c||c|c|c|c|c|c|}
   \cline{3-8}
\multicolumn{2}{c||}{} & \multicolumn{3}{|c|}{$n=30$} & \multicolumn{3}{|c|}{$n=100$} \\
   \hline
Parameter & True Value & Mean & Median & CV & Mean & Median & CV \\
\hline
$\Lambda_{2,11}$ & 0.38 & 0.28 & 0.27 & 0.30 & 0.38 & 0.37 & 0.18\\ 
   \hline
$\Lambda_{2,12}$ & 0.19 & 0.14 & 0.13 & 0.65 & 0.20 & 0.20 & 0.33\\ 
   \hline
$\Lambda_{2,11}^{(H)}$ & 0.66 & 0.5 & 0.48 & 0.31 & 0.70 & 0.68 & 0.19\\ 
   \hline
$\Lambda_{2,12}^{(H)}$ & 0.33 & 0.25 & 0.24 & 0.67 & 0.38 & 0.37 & 0.34\\ 
   \hline
$\Sigma_{11}$ & 0.69 & 0.70 & 0.48 & 1.23 & 0.69 & 0.59 & 0.55\\ 
   \hline
$\Sigma_{12}$ & 0.55 & 0.55 & 0.29 & 1.62 & 0.54 & 0.47 & 0.67\\ 
   \hline
\end{tabular}
\caption{Second L-moments and covariance numerical results for $\nu=0.5$}
\label{numeric}
\end{table}

\bibliographystyle{plain} 
\bibliography{bibliothese} 

\appendix
\section{Proof of Theorem \ref{opt_discrete}}

We adapt the proof of Gu et al. \cite{gu13} to the case of an absolutely continuous probability measure $\mu$ defined on $\Omega\subset \mathbb{R}^d$. We do not assume the compactness of $\Omega$.\\
The proof is divided into four steps
\begin{itemize}
\item First, we show that the set $H=\left\{h\in\mathbb{R}^n\text{   s.t.  }vol(W_i(h)\cap\Omega)>0 \text{  for all i}\right\}$ is a non-void open convex set
\item Secondly, we show that $E_0(h) = \int_{\Omega} \phi_h(x)d\mu(x)$ is a $C^1$-smooth convex function on $H$ so that $\frac{\partial E_0(h)}{\partial h_i}=\int_{W_i(h)\cap\Omega}d\mu(x)$
\item In the third step, we show that $E_0(h)$ is strictly convex on $H_0^{(n)}=H\cap\{h\in\mathbb{R}^n\text{  s.t.  }\sum_{i=1}^n h_i=0\}$
\item Finally, we will prove that $\nabla\phi_h$ is a monotone transport
\end{itemize}

\subsection{Convexity of $H$}
Let denote by $H_i =\left\{h\in\mathbb{R}^n\text{   s.t.  }vol(W_i(h)\cap\Omega)>0\right\} $.
We can remark that the condition $vol(W_i(h)\cap\Omega)>0$ is the same as assuming that $W_i(h)\cap\Omega$ contains a non-empty open set in $\mathbb{R}^d$.\\
Furthermore, as $x_1,...,x_n$ are distinct, if $int(W_i(h))\neq \emptyset$, then $int(W_i(h)) = \{u\in\mathbb{R}^d\text{   s.t.  }u.x_i+h_i>\max_{j\neq i} u.x_j+h_j\}$ (Prop 2.2(a) of Gu et al. \cite{gu13}). It follows that
\begin{equation*}
H_i =\left\{h\in\mathbb{R}^n\text{   s.t.  there exists $u\in\Omega$ so that } u.x_i + h_i>\max_{j\neq i}u.x_j+h_j\right\}.
\end{equation*}
We prove now that, for any $i$, $H_i$ is convex which implies that $H=\cap_{i=1}^n H_i$ is convex.\\
Let $\alpha,\beta\in H_i$ and $0\leq t\leq 1$. Then there exists $v_1,v_2\in \Omega$ such that $v_1.x_i+\alpha_i>v_1.x_j+\alpha_j$ and $v_2.x_i+\beta_i>v_2.x_j+\beta_j$ for $j\neq i$. Then
\begin{equation*}
(tv_1+(1-t)v_2).x_i+ (t\alpha_i + (1-t)\beta_i)>(tv_1+(1-t)v_2).x_j+ (t\alpha_j + (1-t)\beta_j) \text{ for all $j\neq i$}
\end{equation*}
i.e. $tv_1+(1-t)v_2\in H_i$ i.e. $H_i$ is convex. As $H_i$ is non empty since we can take an $h\in\mathbb{R}^n$ such that $h_i$ is as large as needed, we thus have proved that $H$ is an open convex set.\\
Furthermore, as $vol(\Omega)>0$, there exists a cube included in $\Omega$. We can then translate and rescale the Voronoi cells by the method given in Proposition \ref{init_opt} in order to prove that $H\neq\emptyset$.

\subsection{Convexity of $h \mapsto E_0(h)$ and the expression of its gradient}

Let us recall that $E_0(h) = \int_{\Omega} \phi_h(u)d\mu(u)$ with 
\begin{equation*}
\phi_h(u) = \max_{1\leq i\leq n}\{ u.x_i + h_i\} \text{    for $u\in\Omega$}
\end{equation*}
Since functions $(u,h)\mapsto u.x_i+h_i$ are linear, it follows that $(u,h)\mapsto \max_{i} u.x_i+h_i$ is convex in $\Omega\times\mathbb{R}^n$. Furthermore $d\mu$ is a positive measure. We then have that $E_0$ is convex in $\mathbb{R}^n$.\\
Now let $h,d\in\mathbb{R}^n$ and $t>0$. We consider the Gateaux derivative of $E_0$
\begin{equation*}
\frac{E_0(h+td)-E_0(h)}{t} = \int_{\Omega} \frac{\phi_{h+td}(u)-\phi_h(u)}{t} d\mu(u).
\end{equation*}
Since $\phi_h$ is piecewise linear, for almost every $u\in\Omega$, there exists $i$ such that $u\in int(W_i(h))$. Let us choose such a $u$. Then, clearly, for $t$ small enough $\phi_{h+td}(u) = u.x_i + h_i+td_i$ i.e.
\begin{equation*}
\frac{\phi_{h+td}(u)-\phi_h(u)}{t} \rightarrow_{t\rightarrow 0} d_i.
\end{equation*}
Furthermore, if we take an arbitrary $t>0$, there exists $j$ such that $\phi_{h+td}(u) = u.x_j + h_j+td_j$. Then 
\begin{equation*}
\frac{|\phi_{h+td}(u)-\phi_h(u)| }{t} = \frac{|u.(x_i-x_j) + h_i-h_j-td_j| }{t} \leq  \frac{\|u\|\|x_i-x_j\| + |h_i-h_j-td_j|| }{t}.
\end{equation*}
As $d\mu$ is a probability measure of finite expectation, $u\mapsto\frac{\|u\|\|x_i-x_j\| + |h_i-h_j-td_j|| }{t}$ is $d\mu$-measurable. Thus, by the dominated convergence theorem,
\begin{equation*}
\frac{E_0(h+td)-E_0(h)}{t} \rightarrow_{t\rightarrow 0} \sum_{i=1}^n  d_i\int_{W_i(h)\cap\Omega}d\mu(u) 
\end{equation*}
i.e. $E_0$ is Gateaux differentiable and $\frac{\partial E_0}{\partial h_i}=\int_{W_i(h)\cap\Omega}d\mu(u)$.\\
This show furthermore that for some $a\in\mathbb{R}^n$, $E_0$ could be written
\begin{equation*}
E_0(h) = \int_{a}^h \sum_{i=1}^n \int_{W_i(h)\cap\Omega}d\mu(u)dh_i.
\end{equation*}

\subsection{Strict convexity on $H_0^{(n)}$}

Let $w_i(h) = \frac{\partial E_0}{\partial h_i}=\int_{W_i(h)\cap\Omega}d\mu(u)$. Then $\sum_{i=1}^n w_i(h) = \int_{\Omega}d\mu(u) = 1$. 

\begin{lemma}
$h\mapsto w_i(h)$ is a differentiable function and if $W_i(h)\cap\Omega$ and $W_j(h)\cap\Omega$ share a face F with codimension 1. Then
\begin{equation}
\frac{\partial w_i(h)}{\partial h_j}  = -\frac{1}{\|x_i-x_j\|}\int_{F} d\mu_F(u) \text{    for $j\neq i$}
\end{equation}
Otherwise 
\begin{equation}
\frac{\partial w_i(h)}{\partial h_j}  = 0 \text{    for $j\neq i$}.
\end{equation}
\label{gu2}
\end{lemma}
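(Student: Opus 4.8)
The plan is to differentiate $w_i(h) = \int_{W_i(h)\cap\Omega} d\mu(u)$ directionally in $h_j$ by identifying, for small real $t$, the symmetric difference of the cells $W_i(h)\cap\Omega$ and $W_i(h+te_j)\cap\Omega$ as a thin slab adjacent to the wall separating cells $i$ and $j$, and then computing the $\mu$-measure of that slab with the coarea formula. Throughout I write $\rho$ for the density of $\mu$ with respect to Lebesgue measure and $d\mu_F = \rho\, d\mathcal{H}^{d-1}|_F$ for the measure it induces on a piece $F$ of a hyperplane.

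First I would record the elementary geometry. Setting $g_{ij}(u) = u\cdot(x_i-x_j)$, the constraint $u\cdot x_i + h_i \ge u\cdot x_j + h_j$ reads $g_{ij}(u) \ge h_j-h_i$; hence $\nabla g_{ij}$ is the constant vector $x_i-x_j$, and one checks $W_i(h)\cap g_{ij}^{-1}(h_j-h_i) = W_i(h)\cap W_j(h)$, so the common wall is $F_{ij} = W_i(h)\cap W_j(h)\cap\Omega \subset g_{ij}^{-1}(h_j-h_i)$. For $t>0$ only the index-$j$ constraint is tightened, so $W_i(h+te_j)\cap\Omega \subset W_i(h)\cap\Omega$ and
\[
\bigl(W_i(h)\setminus W_i(h+te_j)\bigr)\cap\Omega = \{u\in W_i(h)\cap\Omega : h_j-h_i \le g_{ij}(u) < h_j-h_i+t\} =: S_t ,
\]
whence $w_i(h)-w_i(h+te_j) = \mu(S_t)$. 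For $t<0$ the inclusion reverses and $W_i(h+te_j)\setminus W_i(h)$ is the analogous slab on the other side of $g_{ij}^{-1}(h_j-h_i)$.

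Next I would apply the coarea formula to $\mu$ and the affine map $g_{ij}$, using $|\nabla g_{ij}|\equiv\|x_i-x_j\|$:
\[
\mu(S_t) = \frac{1}{\|x_i-x_j\|}\int_{h_j-h_i}^{h_j-h_i+t} \psi(s)\,ds, \qquad \psi(s) := \int_{W_i(h)\cap\Omega\cap g_{ij}^{-1}(s)} \rho\, d\mathcal{H}^{d-1}.
\]
Dividing by $t$ and letting $t\downarrow 0$, the Lebesgue differentiation theorem gives $\mu(S_t)/t \to \psi(h_j-h_i)/\|x_i-x_j\|$, and by the identity above $\psi(h_j-h_i) = \int_{F_{ij}} d\mu_F$. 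Thus the one-sided derivative of $w_i$ in the $e_j$ direction is $-\|x_i-x_j\|^{-1}\int_{F_{ij}} d\mu_F$; the symmetric argument for $t<0$ yields the same value, so $\partial w_i/\partial h_j$ exists and equals $-\|x_i-x_j\|^{-1}\int_{F_{ij}} d\mu_F$. Since $F_{ij}(h)$, and hence the right-hand side, varies continuously with $h$ on the set $H$ where all cells are full-dimensional, $w_i$ is in fact $C^1$, which gives differentiability. Finally, when $W_i(h)$ and $W_j(h)$ do not share a codimension-one wall, either the cells lie at positive distance, so $S_t=\emptyset$ for $t$ small, or $W_i(h)\cap W_j(h)$ has dimension at most $d-2$, so $\psi$ vanishes in a neighbourhood of $h_j-h_i$; in either case $\mu(S_t)=o(t)$ and $\partial w_i/\partial h_j = 0$.

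The step I expect to be the main obstacle is making the passage to the limit fully rigorous under the sole hypothesis that $\mu$ is absolutely continuous (rather than having a continuous density): one must justify that $s\mapsto\psi(s)$ is well enough behaved at $s=h_j-h_i$ for the averaged integral to converge to $\psi(h_j-h_i)$, and similarly control the $t<0$ side, where the slicing cell $W_i(h+te_j)$ itself moves with $t$. A robust way around this, if the coarea bookkeeping becomes delicate, is to sandwich $S_t$ between two genuine slabs of width of order $t/\|x_i-x_j\|$ centred on $F_{ij}$ and invoke the fact that the $\mu$-measure of a $\delta$-neighbourhood of the $(d-1)$-rectifiable set $F_{ij}$ is $\bigl(\int_{F_{ij}} d\mu_F\bigr)\delta + o(\delta)$ as $\delta\downarrow 0$.
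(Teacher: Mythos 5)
Your slab-plus-coarea computation is, in substance, the argument the paper is pointing at: the paper gives no proof of its own here, deferring entirely to Gu et al.\ with the remark that compactness of $\Omega$ is replaced by $\mu$ being a probability measure. Your write-up correctly identifies $W_i(h)\setminus W_i(h+te_j)$ as the slab $\{u\in W_i(h)\cap\Omega: h_j-h_i\le g_{ij}(u)<h_j-h_i+t\}$, correctly identifies $F_{ij}$ with the level set $W_i(h)\cap g_{ij}^{-1}(h_j-h_i)$, and the factor $\|x_i-x_j\|^{-1}$ comes out of the coarea formula exactly as it should, so the structure of the proof is sound and more explicit than what the paper provides.

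Two points need repair. First, in the ``otherwise'' case your claim that $\psi$ \emph{vanishes in a neighbourhood} of $s_0=h_j-h_i$ when $\dim(W_i(h)\cap W_j(h))\le d-2$ is false: the cell $W_i(h)\cap\Omega$ is a convex set lying in the half-space $\{g_{ij}\ge s_0\}$ whose bottom face $W_i\cap W_j\cap\Omega$ may be a vertex or an edge, yet the cross-sections at levels $s>s_0$ are full $(d-1)$-dimensional (think of a planar cell whose $g_{ij}$-minimum is attained at a single vertex). What is true, and suffices, is that these convex cross-sections converge in Hausdorff distance to the degenerate bottom face as $s\downarrow s_0$, hence their $\mathcal{H}^{d-1}$-measures (and $\psi(s)$, for bounded continuous $\rho$) tend to $0$, which still gives $\mu(S_t)=o(t)$. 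Second, the obstacle you flag yourself is real and cannot be removed for a bare $L^1$ density: the quantity $\int_F\rho\,d\mathcal{H}^{d-1}$ is not even well defined for an equivalence class $\rho$, since $F$ is Lebesgue-null, and the averaged integral $\frac1t\int_{s_0}^{s_0+t}\psi(s)\,ds$ need not converge to the ``value'' at $s_0$. The lemma implicitly assumes enough regularity of the density (continuity suffices, and covers the uniform and Gaussian cases actually used in the paper) for $\mu_F$ to make sense and for $\psi$ to be continuous at $s_0$; under that assumption your limit passage, the matching of the two one-sided derivatives (noting that the $t<0$ slab sits in $W_j(h)$ but its cross-sections still converge to $F_{ij}$), and the continuity of $h\mapsto\int_{F_{ij}(h)}d\mu_F$ needed for the $C^1$ claim all go through.
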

\begin{proof}
It is the adaptation of the proof of Gu et al. \cite{gu13} by replacing the compactness assumption by the hypothesis that $d\mu$ is a probability measure. The use of dominated convergence theorem remains unchanged.
\end{proof}
We have proved that $E_0$ is twice differentiable and we note the Hessian matrix of $E_0$
\begin{equation*}
Hess(E_0) = \left[a_{ij}\right]_{i,j=1...n}=\left[\frac{\partial^2 E_0}{\partial h_i\partial h_j}\right]_{i,j=1...n} = \left[\frac{\partial w_i(h)}{\partial h_j}\right]_{i,j=1...n}.
\end{equation*}
Then 
\begin{equation*}
\sum_{i=1}^n \frac{\partial^2 E_0}{\partial h_i\partial h_j} = \frac{\partial}{\partial h_j}1 = 0
\end{equation*}
i.e. $(1,...,1)\in Ker(Hess(E_0))$. Furthermore, this equality combined with Lemma \ref{gu2} shows that $Hess(E_0)$ is diagonally dominant with positive diagonal entries i.e.
\begin{equation*}
a_{ii} = Hess(E_0)_{ii} =  -\sum_{j\neq i}a_{ij}\geq 0.
\end{equation*}
As $Hess(E_0)$ is Hermitian, $Hess(E_0)$ is positive semidefinite. It remains to show that $(1...1)$ is the only member of its kernel.\\
Let $y\in Ker(Hess(E_0))$. Let us assume without loss of generality that $y_1=\max_{i=1...n}|y_i|>0$. Then if we combine the two equalities
\begin{equation*}
a_{11}y_1 = -\sum_{j=2}^n a_{1j}y_j
\end{equation*}
and
\begin{equation*}
a_{11} = -\sum_{j=2}^n a_{1j}
\end{equation*}
we get
\begin{eqnarray*}
\sum_{j=2}^n a_{1j}(y_j-y_1) = 0
\end{eqnarray*}
As $a_{1j}\leq 0$ and $y_j\leq y_1$, either $a_{1j}=0$ either $y_j=y_1$.\\
Since $\Omega$ is a convex domain and each $W_k(h)$ as well, there exists a rearrangement of $(1,...,n)$, denoted by $i_1,..., i_n$, such that $W_{i_j}\cap\Omega$ and $W_{i_{j+1}}\cap\Omega$ share a codimension-1 face for each $j$. We can again assume without loss of generality that $i_1=1$. Then by iteration, we find that $y_{i_{j+1}} = y_1$ since $a_{i_ji_{j+1}}<0$ for any $j$.\\
It follows that $y = y_1(1....1)$ i.e. $\dim(Ker(Hess(E_0))=0$.

\subsection{$\nabla\phi_{h^*}$ is an optimal transport map}
We produce here the proof of Aurenhammer et al. \cite{aurenhammer98} in order to prove that $\nabla\phi_{h^*}$ minimizes the quadratic transport cost.\\
Let first remark that the quadratic transport of $\nabla\phi_{h^*}$ is 
\begin{equation*}
\sum_{i=1}^n \int_{W_i(h^*)} \|x_i-u\|^2d\mu(u).
\end{equation*}
From Lemma \ref{power_lemma}, $\cup_{i=1}^n W_i(h^*)$ is the power diagram associated to $(x_1,w_1=-\|x_1\|^2-2h_1^*),...,(x_n,w_n=-\|x_n\|^2-2h_n^*)$. Suppose that $(V_1,...,V_n)$ is any partition of $\mathbb{R}^d$ such that 
\begin{equation*}
\int_{V_i} d\mu(u) = \int_{W_i(h^*)}d\mu(u) = \frac{1}{n}  \text{   for any $i=1...n$}
\end{equation*}
By definition of the power diagram, we get
\begin{equation*}
\sum_{i=1}^n \int_{W_i(h^*)} (\|x_i-u\|^2+w_i) d\mu(u) \leq \sum_{i=1}^n \int_{V_i} (\|x_i-u\|^2+w_i) d\mu(u) 
\end{equation*}
i.e.
\begin{equation*}
\sum_{i=1}^n \int_{W_i(h^*)} \|x_i-u\|^2 d\mu(u) \leq \sum_{i=1}^n \int_{V_i} \|x_i-u\|^2 d\mu(u).
\end{equation*}
This shows that $\nabla\phi_{h^*}$ minimized the quadratic cost.\\
Alternatively, we could mention that as $\nabla\phi_{h^*}$ is a gradient of a convex function which transports $\mu$ onto $\nu_n$, we get the result above by Proposition \ref{prop_brenier}. However, the proof given above makes this result explicit.

\end{document}